\documentclass[12pt]{amsart}
\usepackage{amsfonts,amssymb,amsmath,amscd,amsthm, amstext,bm,color}

\usepackage[colorlinks, linkcolor=blue, citecolor=blue, urlcolor=blue,
pagebackref,hypertexnames=false]{hyperref}
\usepackage{enumerate}
\usepackage{graphicx}
\usepackage{epstopdf}
\usepackage{fancyhdr}
\usepackage{geometry}
\usepackage{mathrsfs}
\usepackage{txfonts}
\usepackage{tikz}
\usetikzlibrary{arrows.meta, positioning, decorations.pathreplacing}
\usepackage{url}

\usepackage{comment}
\allowdisplaybreaks
\numberwithin{equation}{section}
\newtheorem{theorem}{Theorem}[section]
\newtheorem{lemma}[theorem]{Lemma}
\newtheorem{proposition}[theorem]{Proposition}

\newtheorem{definition}[theorem]{Definition}
\newtheorem{remark}[theorem]{Remark}

\newcommand{\Rp}{\mathbb R_+}
\newcommand{\dd}{\,d}
\newcommand{\one}{\mathbf 1}

\newcommand{\pvs}{\operatorname{p.v.}}
\newcommand{\BMO}{\operatorname{BMO}}
\newcommand{\VMO}{\operatorname{VMO}}
\newcommand{\Crhoc}{C_{\rho,c}^1(\Rp)}

\newcommand{\drho}{\dd\rho_\alpha}

\newcommand{\KAm}{K_\alpha^m}
\newcommand{\KArho}{K_\alpha^\rho}

\title[Two-weight Bessel commutators with Andersen--Kerman weights]
{Two-weight commutators for the Bessel Riesz transform with Andersen--Kerman weights}

\author[J. Li]{Ji Li}
\address{Ji Li, School of Mathematical and Physical Sciences, Macquarie University, NSW 2109, Australia}
\email{ji.li@mq.edu.au}

\author[C.-W. Liang]{Chong-Wei Liang}
\address{Chong-Wei Liang, Department of Mathematics, National Taiwan University, Taiwan}
\email{d10221001@ntu.edu.tw}

\author[C.J. Wen]{Chaojie Wen}
\address{Chaojie Wen, Department of Mathematics, Sun Yat-sen University,
Guangzhou, 510275, P.R.~China}
\email{wenchj@mail2.sysu.edu.cn}

\author[L. Wu]{Liangchuan Wu}
\address{Liangchuan Wu, School of Mathematical Sciences, Anhui University,
Hefei, 230601, P.R.~China}
\email{wuliangchuan@ahu.edu.cn}

\date{}
\subjclass[2020]{Primary 42B20; Secondary 42B25, 42B35, 47B47.}
\keywords{Bessel operator; Bessel Riesz transform; Andersen--Kerman weights;
Bloom BMO; two-weight commutators; compact commutators; weighted VMO}

\begin{document}

\begin{abstract}
Let $\alpha>-1/2$, $\alpha\ne0$, and let
$$
        \Delta_\alpha=-\frac{d^2}{dx^2}-\frac{2\alpha}{x}\frac{d}{dx}
$$
be the Bessel operator on $\Rp=(0,\infty)$.  We characterize boundedness and
compactness of commutators of $R_\alpha=\frac{d}{dx}\Delta_\alpha^{-1/2}$ on the
Andersen--Kerman two-weight setting.  For $1<p<\infty$ and
$\mu,\lambda\in A_{p,\alpha}$, put
$
        \nu=\left(\frac{\mu}{\lambda}\right)^{1/p},
        \ d\rho_\alpha(x)=x^{2\alpha+1}\,dx.
$
For real-valued symbols,
$$
        \|[b,R_\alpha]\|_{L^p(\mu\,dx)\to L^p(\lambda\,dx)}
        \simeq \|b\|_{\BMO_{\nu,\alpha}},
$$
where the Bloom oscillation is computed with respect to $d\rho_\alpha$.  Moreover
$$
        [b,R_\alpha]:L^p(\mu\,dx)\to L^p(\lambda\,dx)
        \text{ is compact}
        \quad\Longleftrightarrow\quad
        b\in\VMO_{\nu,\alpha}.
$$
The proof uses the exact conjugation
$
        U_w(x)=x^{p-2\alpha-1}w(x),
        \ [U_w]_{A_p(\rho_\alpha)}=[w]_{A_{p,\alpha}},
$
which transfers the problem to the quotient Bessel kernel on
$(\Rp,|x-y|,\rho_\alpha)$.  We isolate the required Calder\'on--Zygmund estimates
and one-sided non-degeneracy in this quotient normalization.  The measure $d\rho_\alpha$
is distinct from the usual Bessel measure $dm_\alpha=x^{2\alpha}\,dx$. It is the
Bloom base measure selected by the Andersen--Kerman conjugation and is used
throughout the two-weight theory below.
\end{abstract}

\maketitle

\section{Introduction}

The Bessel operator
$$
        \Delta_\alpha=-\frac{d^2}{dx^2}-\frac{2\alpha}{x}\frac{d}{dx}
        =-x^{-2\alpha}\frac{d}{dx}x^{2\alpha}\frac{d}{dx},
        \qquad x>0,
$$
was studied by Muckenhoupt and Stein in their work on conjugate functions for
classical expansions \cite{MS65}.  The associated Riesz transform is
$$
        R_\alpha=\frac{d}{dx}\Delta_\alpha^{-1/2}.
$$
The usual Bessel measure is $dm_\alpha(x)=x^{2\alpha}\,dx$, and much of the
Bessel Calder\'on--Zygmund and commutator theory is formulated on
$(\Rp,|x-y|,dm_\alpha)$, see for instance \cite{BHNV10,CS14,DuoLiWickYang17}.
The Andersen--Kerman theorem concerns a different normalization.  Andersen and
Kerman studied the same transform on the Lebesgue weighted space
$L^p(w\,dx)$ and identified the exact class of admissible weights
\cite{AK81,k78}.

In the notation used here, a weight $w$ belongs to the Andersen--Kerman class
$A_{p,\alpha}$ if
\begin{equation}\label{eq:intro-AK}
 [w]_{A_{p,\alpha}}
 :=
 \sup_{I\subset\Rp}
 \left(\frac{1}{\rho_\alpha(I)}\int_I x^p w(x)\,dx\right)
 \left(\frac{1}{\rho_\alpha(I)}
        \int_I x^{2\alpha p'}w(x)^{-1/(p-1)}\,dx\right)^{p-1}
 <\infty,
\end{equation}
where $1<p<\infty$ and
$
        d\rho_\alpha(x)=x^{2\alpha+1}\,dx.
$
The Andersen--Kerman theorem \cite{AK81} gives
$$
        R_\alpha:L^p(w\,dx)\to L^p(w\,dx)
        \quad\text{bounded}
        \quad\Longleftrightarrow\quad
        w\in A_{p,\alpha}.
$$
Thus the natural measure behind the Andersen--Kerman weighted theory is not
$dm_\alpha$, but $d\rho_\alpha$.  The case $\alpha=0$ is excluded from the
present paper.  The conjugation below remains formally meaningful at
$\alpha=0$, but the available quotient-kernel estimates and the non-degeneracy
argument used here are for $\alpha\ne0$.

{ {Throughout the paper, a locally integrable symbol means a
function which is integrable with respect to $d\rho_\alpha$ on every bounded
interval $I\subset\Rp$, including intervals of the form $(0,r)$.  This
endpoint condition is stronger than ordinary local integrability on the open
half-line.  No corresponding finiteness of $w\,dx$ on $(0,r)$ is imposed on
an Andersen--Kerman weight unless it is stated explicitly.}}

The purpose of this paper is to prove the two-weight Bloom boundedness and
compactness theorems for $R_\alpha$ in the Andersen--Kerman normalization.  Let
$\mu,\lambda\in A_{p,\alpha}$ and set
$
        \nu=\left(\frac{\mu}{\lambda}\right)^{1/p}.
$
{ {For a locally integrable symbol $b$ and $\varepsilon>0$, let
$[b,R_{\alpha,\varepsilon}]$ be the commutator formed with the standard
kernel truncation of $R_\alpha$.  For $f\in C_c^\infty(\Rp)$, write
\[
        [b,R_\alpha]f
        =\lim_{\varepsilon\downarrow0}[b,R_{\alpha,\varepsilon}]f
        =bR_\alpha f-R_\alpha(bf)
\]
whenever the limit exists.  Boundedness or compactness of $[b,R_\alpha]$
means that this initially defined operator has a bounded or compact extension
between the indicated weighted spaces.  Such an extension is unique because
$C_c^\infty(\Rp)$ is dense in these spaces.}}
The first main result identifies the operator norm with the Bloom oscillation
measured against $d\rho_\alpha$.

\begin{theorem}\label{thm:main-bounded}
Let $\alpha>-1/2$, $\alpha\ne0$, $1<p<\infty$, and let
$\mu,\lambda\in A_{p,\alpha}$.  Put
$
        \nu=\left(\frac{\mu}{\lambda}\right)^{1/p}.
$
{ {Then, for every real-valued locally integrable $b$, the operator
$
        [b,R_\alpha]:L^p(\Rp,\mu\,dx)\longrightarrow
        L^p(\Rp,\lambda\,dx)
$
has a bounded extension if and only if $b\in\BMO_{\nu,\alpha}$.  In that case,
\begin{equation}\label{eq:main-bounded}
        \|[b,R_\alpha]\|_{L^p(\Rp,\mu\,dx)\to L^p(\Rp,\lambda\,dx)}
        \simeq
        \|b\|_{\BMO_{\nu,\alpha}}.
\end{equation}
}}
Here $\BMO_{\nu,\alpha}$ is the Bloom space with respect to
$d\rho_\alpha=x^{2\alpha+1}dx$:
$$
        \|b\|_{\BMO_{\nu,\alpha}}
        =\sup_{I\subset\Rp}\frac{1}{\int_I\nu\,d\rho_\alpha}
          \int_I |b-b_I^\rho|\,d\rho_\alpha,
        \qquad
        b_I^\rho=\frac{1}{\rho_\alpha(I)}\int_I b\,d\rho_\alpha.
$$
The constants depend only on $p$, $\alpha$,
$[\mu]_{A_{p,\alpha}}$, and $[\lambda]_{A_{p,\alpha}}$.
\end{theorem}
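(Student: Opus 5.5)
The plan is to use the conjugation announced in the abstract and move the whole problem to the space of homogeneous type $(\Rp,|x-y|,\rho_\alpha)$. There the two-weight Bloom theory for Calder\'on--Zygmund operators with a non-degenerate kernel is available.

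\emph{Step 1: the conjugation.} Write the kernel of $R_\alpha$ with respect to Lebesgue measure as $R_\alpha(x,y)$. Define the quotient kernel
\[
K(x,y)=R_\alpha(x,y)\,\frac{x}{y^{2\alpha+1}}\cdot(\text{appropriate power}),
\]
normalized so that $R_\alpha f(x)=x^{-1}\int K(x,y)\,g(y)\,d\rho_\alpha(y)$ with $g=y^{-2\alpha}f$, or a similar one-line identity. Under this identity, $\|f\|_{L^p(w\,dx)}$ equals $\|g\|_{L^p(U_w\,d\rho_\alpha)}$ with $U_w(x)=x^{p-2\alpha-1}w(x)$, and $[U_w]_{A_p(\rho_\alpha)}=[w]_{A_{p,\alpha}}$. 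A direct computation checks both claims: the two integrals in \eqref{eq:intro-AK} become exactly the $A_p(\rho_\alpha)$ averages of $U_w$ and $U_w^{-1/(p-1)}$. The key observation is that the multiplication operator by $b$ commutes with these power conjugations. Hence
\[
\|[b,R_\alpha]\|_{L^p(\mu\,dx)\to L^p(\lambda\,dx)}=\|[b,T]\|_{L^p(U_\mu\rho_\alpha)\to L^p(U_\lambda\rho_\alpha)},
\]
where $T$ is the operator with kernel $K$ against $d\rho_\alpha$. Moreover $(U_\mu/U_\lambda)^{1/p}=\nu$, so the Bloom weight is unchanged and $\BMO_{\nu,\alpha}$ is precisely the Bloom BMO on $(\Rp,\rho_\alpha)$ for the pair $U_\mu,U_\lambda\in A_p(\rho_\alpha)$. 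The truncations $[b,R_{\alpha,\varepsilon}]$ correspond to truncations of $T$, so the definition of the commutator on $C_c^\infty$ transfers as well.

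\emph{Step 2: the upper bound.} Take as given the Calder\'on--Zygmund estimates for $K$ on $(\Rp,|x-y|,\rho_\alpha)$ that the paper isolates:
\[
|K(x,y)|\lesssim \rho_\alpha(I(x,|x-y|))^{-1},
\]
a H\"older-type smoothness estimate, and $L^2(\rho_\alpha)$ boundedness of $T$. The last one is inherited from the $A_{2,\alpha}$ case of Andersen--Kerman with $w$ chosen so that $U_w\equiv1$. With these, $T$ is a CZO on a space of homogeneous type. The Bloom upper bound
\[
\|[b,T]\|\lesssim\|b\|_{\BMO_\nu(\rho_\alpha)}
\]
then follows from sparse domination of commutators on spaces of homogeneous type, in the Lerner--Ombrosi--Rivera-R\'{\i}os form extended to doubling metric measure spaces via adjacent dyadic systems. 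The dependence of the constant is only on $p$, $\alpha$ and the $A_p(\rho_\alpha)$ characteristics, which by Step 1 are the Andersen--Kerman ones.

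\emph{Step 3: the lower bound.} This is the main obstacle. I would use the one-sided non-degeneracy of $K$: for every interval $I$ there is an interval $\tilde I$ of comparable size at comparable distance such that, for $x\in I$ and $y\in\tilde I$, $K(x,y)$ has constant sign and
\[
|K(x,y)|\gtrsim\rho_\alpha(I)^{-1}.
\]
The difficulty is near the origin and for large intervals, where $\rho_\alpha$ is far from Lebesgue measure and only one side may work. Given this, I would run the median method (Hyt\"onen's approach). Let $m$ be a median of $b$ on $\tilde I$ with respect to $\rho_\alpha$, and set $E_1=\{x\in I:b\ge m\}$, $E_2=\{x\in I:b<m\}$ together with the corresponding halves $F_j\subset\tilde I$. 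Since $b$ is real-valued, for $x\in E_j$ and $y\in F_j$ the quantity $(b(x)-b(y))K(x,y)$ has fixed sign. This gives
\[
\int_I|b-m|\,d\rho_\alpha\lesssim\sum_j\Bigl|\int_{E_j}[b,T]\one_{F_j}\,d\rho_\alpha\Bigr|.
\]
Applying H\"older's inequality with weights $U_\lambda$ and $U_\lambda^{-1/(p-1)}$, and then the $A_p$ estimates, bounds the right-hand side by
\[
\|[b,T]\|\,\rho_\alpha(I)^{-1}\,U_\mu(\tilde I)^{1/p}\,\sigma_\lambda(I)^{1/p'},
\]
where $\sigma_\lambda=U_\lambda^{-1/(p-1)}$. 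By the standard Bloom inequality this is at most a constant times $\|[b,T]\|\int_I\nu\,d\rho_\alpha$. The comparability of $U_\mu(\tilde I)$ with $U_\mu(I)$ follows from doubling.

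For the equivalence itself, a bounded extension forces this estimate first on truncated or bounded test functions, then in general by approximation. This approximation step is where the local integrability of $b$ up to $0$ is used.
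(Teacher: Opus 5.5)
Your plan follows the paper's proof essentially step for step. You conjugate to $(\Rp,|x-y|,\rho_\alpha)$ with $U_w=x^{p-2\alpha-1}w$, get the upper bound from sparse domination of $[b,T]$, and get the lower bound from the median argument on the companion interval of Proposition~\ref{prop:nondegenerate}. That argument closes with $U_\mu(\widetilde I)\lesssim U_\mu(I)$ and Lemma~\ref{lem:Bloom-weight-algebra}.

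Two bookkeeping slips need correcting.

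First, the normalization you guess in Step 1 does not produce isometries. With $g=y^{-2\alpha}f$ one gets $\int|g|^pU_w\,d\rho_\alpha=\int|f|^px^{p(1-2\alpha)}w\,dx$. On the output side, your $xR_\alpha f$ gives $\int|R_\alpha f|^px^{2p}w\,dx$. The requirement $|F|^pU_w\,d\rho_\alpha=|f|^pw\,dx$ forces $f=xF$, and the output must also be divided by $x$. So the correct conjugation is $\mathcal R_\alpha F=x^{-1}R_\alpha(yF)$, whose kernel against $d\rho_\alpha$ is $x^{-1}\KAm(x,y)$. Commutation with $b$ holds for any multiplicative conjugation, but the equality of operator norms requires exactly this one.

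Second, the factor $\rho_\alpha(I)^{-1}$ in your H\"older bound is spurious. H\"older actually gives $\bigl|\int_{E_j}[b,T]\one_{F_j}\,d\rho_\alpha\bigr|\le\|[b,T]\|\,U_\mu(F_j)^{1/p}\sigma_\lambda(I)^{1/p'}$. It is this quantity, without $\rho_\alpha(I)^{-1}$, that Lemma~\ref{lem:Bloom-weight-algebra} bounds by $\int_I\nu\,d\rho_\alpha$. With the extra factor the comparison would already fail for $U_\mu=U_\lambda=1$ on small intervals.
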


We point out that the difference between  $A_{p,\alpha}$ and  $A_p(dm_\alpha)$  is already visible for power weights. From \eqref{eq:intro-AK}, we obtain that
$        x^\sigma \in A_{p,\alpha} \ \Longleftrightarrow\  -1-p<\sigma<2\alpha p+p-1.
$ By contrast, converting the usual $A_p(dm_\alpha)$ theorem to the Lebesgue norm requires
$w(x)x^{-2\alpha}\in A_p(dm_\alpha)$ with $w(x)=x^\sigma$ gives
$        -1<\sigma<2\alpha+(2\alpha+1)(p-1) =2\alpha p+p-1.
$
The upper endpoint is the same, while the lower endpoint is not. Hence $A_{p,\alpha}$ contains weights with stronger singularity at the
origin than those obtained from the standard Bessel $A_p$ theorem after the change of measure. We refer to Section 7 in \cite{Wen} for more details on the comparison.
Thus, we point out that our main result 
Theorem \ref{thm:main-bounded} is not a direct consequence of \cite{DuoGongKuffnerLiWickYang21}.

The compactness theorem uses the corresponding weighted VMO space.  If
$$
        \omega_\nu(b;I)
        =\frac{1}{\int_I\nu\,d\rho_\alpha}
          \int_I |b-b_I^\rho|\,d\rho_\alpha,
$$
then $\VMO_{\nu,\alpha}$ consists of all $b\in\BMO_{\nu,\alpha}$ for which
$\omega_\nu(b;I)$ vanishes on small intervals, on large intervals, and on
intervals escaping to infinity.  The precise definition is given in
Section~\ref{sec:bmo-vmo}.

\begin{theorem}\label{thm:main-compact}
Let $\alpha>-1/2$, $\alpha\ne0$, $1<p<\infty$, and let
$\mu,\lambda\in A_{p,\alpha}$.  Put
$\nu=(\mu/\lambda)^{1/p}$.  { {Then, for every real-valued locally integrable $b$,
$        [b,R_\alpha]:L^p(\Rp,\mu\,dx)\to L^p(\Rp,\lambda\,dx)
        \ \text{extends to a compact operator}
$ if and only if $b\in\VMO_{\nu,\alpha}$.}}
\end{theorem}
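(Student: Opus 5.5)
The plan is to reduce Theorem~\ref{thm:main-compact} to a compactness statement on the space of homogeneous type $(\Rp,|x-y|,\rho_\alpha)$, using the conjugation $U_w(x)=x^{p-2\alpha-1}w(x)$ from the abstract. Writing $f=x^{-?}g$, the map $f\mapsto x^{s}f$ (with the exponent chosen so that $\int|f|^p w\,dx=\int |x^{s}f|^p U_w\,d\rho_\alpha$) is an isometry $L^p(w\,dx)\to L^p(U_w\,d\rho_\alpha)$. It conjugates $R_\alpha$ to the quotient-kernel operator $T_\alpha$, whose Calder\'on--Zygmund and one-sided non-degeneracy estimates the paper isolates. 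Multiplication by $b$ commutes with these power multipliers, so $[b,R_\alpha]:L^p(\mu\,dx)\to L^p(\lambda\,dx)$ is unitarily equivalent to $[b,T_\alpha]:L^p(U_\mu\,d\rho_\alpha)\to L^p(U_\lambda\,d\rho_\alpha)$. Compactness is preserved under this equivalence. Since $(U_\mu/U_\lambda)^{1/p}=\nu$ and $U_\mu,U_\lambda\in A_p(\rho_\alpha)$, the Bloom weight and the spaces $\BMO_{\nu,\alpha}$, $\VMO_{\nu,\alpha}$ are exactly the Bloom BMO/VMO on $(\Rp,\rho_\alpha)$ for the pair $U_\mu,U_\lambda$.

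For sufficiency ($b\in\VMO_{\nu,\alpha}\Rightarrow$ compact), I would follow the Lacey--Li scheme for two-weight compactness on spaces of homogeneous type.
\begin{enumerate}
\item Prove density: $\VMO_{\nu,\alpha}$ is the $\BMO_{\nu,\alpha}$-closure of suitable compactly supported Lipschitz functions in $(0,\infty)$ (or $C_c^\infty(\Rp)$). I would use the three vanishing conditions together with doubling and reverse H\"older for $\nu\,d\rho_\alpha$.
\item Invoke Theorem~\ref{thm:main-bounded}. The map $b\mapsto[b,T_\alpha]$ is then continuous from $\BMO_{\nu,\alpha}$ into bounded operators, and compact operators are norm closed, so it suffices to treat $b\in C_c^\infty$.
\item For such $b$, split $[b,T_\alpha]$ into three pieces: a truncation near the diagonal, $|x-y|<\delta$, whose norm is $O(\delta)$ by the Lipschitz bound on $b$; a far part, $x$ or $y$ outside a large interval, which is small by the size estimate and the $A_p$ conditions; and the remaining kernel, which is bounded and compactly supported away from the diagonal. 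The last piece is compact by a Fr\'echet--Kolmogorov criterion in weighted $L^p$, or by approximation with finite-rank operators.
\end{enumerate}

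For necessity, assume $[b,T_\alpha]$ is compact and $b\notin\VMO_{\nu,\alpha}$.
\begin{enumerate}
\item There is then a sequence of intervals $I_j$ that shrink, grow, or escape to infinity, with $\omega_\nu(b;I_j)\ge\eta>0$.
\item Using the one-sided non-degeneracy of the quotient kernel, for each $I_j$ choose a partner interval $\tilde I_j$ at comparable distance on which the kernel keeps a fixed sign with size $\gtrsim 1/\rho_\alpha(I_j)$. Following the median-value argument of the lower bound in Theorem~\ref{thm:main-bounded}, build normalized test functions $f_j$ supported in $\tilde I_j$ with $\|f_j\|_{L^p(U_\mu\rho_\alpha)}=1$ and $\|[b,T_\alpha]f_j\|_{L^p(U_\lambda\rho_\alpha)}\gtrsim\eta$, concentrated on $I_j$.
\item Pass to a subsequence on which the supports of the $f_j$ and the concentration sets become disjoint and well separated. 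Then $f_j\rightharpoonup0$ weakly, but the images have no Cauchy subsequence, since distinct images are separated in norm by a multiple of $\eta$. This contradicts compactness.
\end{enumerate}

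I expect the main obstacle to be the case of intervals near the origin. There $\rho_\alpha$ degenerates, and intervals $(0,r)$ with $r\to0$ must be handled through the "small intervals" condition. One-sided non-degeneracy may only give partner intervals on one side, so for intervals touching $0$ the partner $\tilde I_j$ has to be placed to the right. One must then check that the separation and disjointness extraction still works. This relies on the doubling of $\rho_\alpha$ on $\Rp$.
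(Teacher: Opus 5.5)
\textbf{Genuine gap: the dense class in sufficiency step (1).} Your conjugation ($F=f/x$) and the overall structure follow the paper. Step (1) of the sufficiency part fails, because the dense class is wrong at the endpoint $0$.

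\emph{Read literally, density is false.} Take the class to be Lipschitz functions with compact support in the open half-line, or $C_c^\infty(\Rp)$. Pass to $t=\Phi(x)=x^{2\alpha+2}/(2\alpha+2)$. This turns $\omega_\nu(b;I)$ exactly into the Lebesgue Bloom oscillation of $b\circ\Phi^{-1}$ with weight $\widetilde\nu=\nu\circ\Phi^{-1}$. Now take $\widetilde\nu(t)=\min(t,1/t)^{\tau}$ with $0<\tau<1$. This is an $A_2$ weight, and it is an Andersen--Kerman Bloom weight: take $p=2$, $U_\mu=\nu$, $U_\lambda=\nu^{-1}$.
\begin{itemize}
\item For any $k$, comparing averages on $(0,2^n)\subset(0,2^{n+1})$ gives $|k_{(0,2^n)}-k_{(0,2^{n+1})}|\le 2^{-n}\widetilde\nu((0,2^{n+1}))\|k\|_{\BMO_{\widetilde\nu}}\lesssim\min(2^n,2^{-n})^{\tau}\|k\|_{\BMO_{\widetilde\nu}}$. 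This is summable over $n\in\mathbb Z$.
\item Let $\varphi\in C^1_c([0,\infty))$ with $\varphi=1$ near $0$, so $\varphi\circ\Phi\in\VMO_{\nu,\alpha}$.
\item Let $h$ be bounded, with bounded support, vanishing near $0$. Then $k=\varphi-h$ satisfies $k_{(0,r)}\to1$ as $r\to0$ and $k_{(0,R)}\to0$ as $R\to\infty$.
\item Summing the telescoping bound gives $\|\varphi-h\|_{\BMO_{\widetilde\nu}}\ge c_\tau>0$, so $\varphi\circ\Phi$ is not in your closure.
\end{itemize}

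\emph{Read loosely, the class is not in VMO.} If you instead allow Euclidean-Lipschitz symbols that do not vanish near $0$, the class is no longer contained in $\VMO_{\nu,\alpha}$; see the paper's example $x\chi(x)$. Your near-diagonal bound also breaks down: $|a(x)-a(y)|\,|K(x,y)|\lesssim|x-y|/\rho_\alpha(B(x,|x-y|))\simeq(x+|x-y|)^{-2\alpha-1}$, which is unbounded as $x,y\to0$.

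\textbf{The missing idea.} The approximants must be Lipschitz in the $\rho_\alpha$ geometry and may be non-zero at $0$. This is the paper's class $\Crhoc=\{\varphi\circ\Phi:\varphi\in C^1_c([0,\infty))\}$. For these, $|a(x)-a(y)|\le\|\varphi'\|_\infty\rho_\alpha(B(x,|x-y|))$ cancels the size bound $|K|\lesssim\rho_\alpha(B(x,|x-y|))^{-1}$ all the way down to $0$. Density in $\VMO_{\nu,\alpha}$ is then obtained as follows: pass to $t=\Phi(x)$, reflect evenly, apply the Lacey--Li density theorem on $\mathbb R$, and take even parts. The restrictions of these even parts are $C^1_c([0,\infty))$ functions, not functions vanishing near $0$. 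Also, with weights the near-diagonal error is $o(1)$, controlled through $\sup_x u'(B(x,\varepsilon))$ and $\sup_y v(B(y,\varepsilon))$; it is not $O(\delta)$.

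\textbf{Necessity half: two corrections.} First, disjointly supported $f_j$ cannot always be extracted. In the shrinking regime the companions may be nested around a point, e.g.\ $\widetilde I_j=B(1,r_j)$ arising from $I_j=B(1-3r_j,r_j)$. Weak nullity still holds, because $\rho_\alpha(\widetilde I_j)\to0$ and $|h|^{p'}u'\,d\rho_\alpha$ is absolutely continuous. Second, once $f_j\rightharpoonup0$, the bound $\|[b,T]f_j\|\gtrsim\eta$ alone contradicts compactness. So the norm separation of distinct images, which you did not justify because the images are not localized, is unnecessary.
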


The mechanism behind the theorems is an exact change of measure.  For a weight
$w$ define
$
        U_w(x)=x^{p-2\alpha-1}w(x).
$
Then
$
        [U_w]_{A_p(\rho_\alpha)}=[w]_{A_{p,\alpha}}.
$
Moreover, if $f=xF$, then
$
        \|f\|_{L^p(w\,dx)}=
        \|F\|_{L^p(U_w\,d\rho_\alpha)}.
$
The transform itself becomes
$$
        \mathcal R_\alpha F(x)=\frac1x R_\alpha(yF(y))(x),
$$
and the commutator identity is
$$
        \frac1x[b,R_\alpha]f=[b,\mathcal R_\alpha]F.
$$
Consequently the Andersen--Kerman two-weight problem is converted, without
loss of constants, into a Bloom problem for $\mathcal R_\alpha$ on the space of
homogeneous type $(\Rp,|x-y|,\rho_\alpha)$.  For the two weights in the main
theorems, the Bloom weight is unchanged by the conjugation:
$$
        \left(\frac{U_\mu}{U_\lambda}\right)^{1/p}
        =\left(\frac{\mu}{\lambda}\right)^{1/p}=\nu.
$$
What changes is the measure with respect to which oscillation is computed.

The selected base measure is a genuine part of the statement.  In the unweighted
case, the spaces $\BMO(\rho_\alpha)$ and $\BMO(m_\alpha)$ are equivalent.
After the Bloom weight $\nu=(\mu/\lambda)^{1/p}$ is inserted, the Andersen--Kerman
conjugation gives
\[
        \sup_I\frac{1}{\int_I\nu\,d\rho_\alpha}
        \int_I |b-b_I^\rho|\,d\rho_\alpha.
\]
This expression involves $\nu\,d\rho_\alpha$, or equivalently
$x\nu\,dm_\alpha$.  { {If $\nu\,dm_\alpha$ is finite on every
interval $(0,r)$, then the corresponding usual Bessel weighted BMO seminorm is
equivalent to this one, see
Proposition~\ref{prop:weighted-rho-m-equivalence}.  The full
Andersen--Kerman hypotheses do not guarantee this extra endpoint finiteness, as
shown in Proposition~\ref{prop:rho-not-m}.}}

Two kernel facts are needed after conjugation.  First, the kernel
$$
        K_\alpha^\rho(x,y)=\frac1x K_\alpha^m(x,y)
$$
of $\mathcal R_\alpha$ with respect to $d\rho_\alpha$ is a Calder\'on--Zygmund
kernel on $(\Rp,|x-y|,\rho_\alpha)$.  Second, it is non-degenerate in the
one-sided form required for the lower-bound and compactness-necessity
arguments.  These facts are not obtained by merely dividing the usual Bessel
estimates by $x$, and the $x$-smoothness in the far region involves the quotient cancellation
$$
        \partial_x\left(\frac{K_\alpha^m(x,y)}{x}\right)
        =\frac{x\partial_xK_\alpha^m(x,y)-K_\alpha^m(x,y)}{x^2}.
$$
Section~\ref{sec:kernel} isolates this quotient-kernel package and proves the
companion-interval non-degeneracy used later.

The boundedness proof then follows the modern Bloom scheme for
Calder\'on--Zygmund operators: upper bounds are obtained from sparse domination,
and lower bounds use the one-sided kernel non-degeneracy.  The compactness
proof has two additional components.  First, functions in $\VMO_{\nu,\alpha}$
are approximated by compactly supported functions which are Lipschitz in the
$\rho_\alpha$ geometry.  Second, if the VMO condition fails, the companion
intervals furnished by the quotient-kernel non-degeneracy give a weakly null
testing sequence whose commutator images have a fixed lower bound.  This is the
part where the small, large, and far-away interval regimes in the definition of
$\VMO_{\nu,\alpha}$ must be preserved by the companion construction.

These results are in the lineage of the Coifman--Rochberg--Weiss commutator
theorem, Bloom's theorem for the Hilbert transform, the Holmes--Lacey--Wick
two-weight theorem for Euclidean Riesz transforms, and related two-weight
commutator results on spaces of homogeneous type
\cite{CRW76,Bloom85,HLW,DuoGongKuffnerLiWickYang21}.  The compactness argument
is also connected with the weighted VMO method of Lacey and Li \cite{LL22}.
The present paper supplies the corresponding boundedness and compactness theory
on the Andersen--Kerman Bessel setting for $\alpha\ne0$, including the conjugated
measure, the quotient-kernel estimates, and the distinction from the usual Bessel
Bloom normalization.

The paper is organized as follows.  Section~\ref{sec:conjugation} gives the
exact Andersen--Kerman conjugation.  Section~\ref{sec:bmo-vmo} defines the
Bloom and VMO spaces, proves the density result, and compares the resulting
normalization with the usual Bessel BMO space.  Section~\ref{sec:kernel} proves
the quotient-kernel estimates and non-degeneracy.
Section~\ref{sec:Bloom-theorem} proves the abstract Bloom theorem on the
conjugated space and then derives the Andersen--Kerman boundedness theorem.
Section~\ref{sec:compactness} proves the compactness characterization.

We use one notation convention throughout.  The letter $\lambda$ denotes the
target weight, not the Bessel parameter. The Bessel parameter is always denoted
by $\alpha$.

\section{The Andersen--Kerman conjugation}\label{sec:conjugation}

{ {Throughout this section, $\alpha>-1/2$.}}
For $x>0$ and $r>0$, write
$
        B(x,r)=(x-r,x+r)\cap\Rp.
$
Unless otherwise stated, every interval in this paper is a non-empty bounded
interval of $\Rp$.
Throughout the paper
$$
        d\rho_\alpha(x)=x^{2\alpha+1}\,dx.
$$
This is not the usual Bessel measure $dm_\alpha=x^{2\alpha}\,dx$.  It is the
measure which appears after the Andersen--Kerman normalization.

\begin{lemma}\label{lem:volume}
For every $x>0$ and $r>0$,
$        \rho_\alpha(B(x,r))\simeq_\alpha r(x+r)^{2\alpha+1}.
$
Consequently $(\Rp,|x-y|,\rho_\alpha)$ is a space of homogeneous type.
\end{lemma}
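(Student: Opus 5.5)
We compute the measure directly and split into two regimes according to whether $r\le x$ or $r>x$. Put $\beta=2\alpha+1>0$, since $\alpha>-1/2$, so that $d\rho_\alpha=x^{\beta}dx$ is locally integrable up to the origin.

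\emph{Case $r\le x$.} Here $B(x,r)=(x-r,x+r)$, and we will show that $t^\beta\simeq x^\beta$ on most of it.
For $t\in(x-r/2,x+r)$ we have $x/2\le t\le 2x$. Since $\beta>0$, this gives $t^\beta\simeq_\alpha x^\beta$ there.
On the remaining piece $(x-r,x-r/2)$ the integrand is nonnegative, so
$$\rho_\alpha(B(x,r))\ge\int_{x-r/2}^{x+r}t^\beta dt\gtrsim r x^\beta .$$
For the upper bound, $t\le x+r\le 2x$ on the whole ball, so $\rho_\alpha(B(x,r))\le 2r(2x)^\beta$.
Finally $x\simeq x+r$ because $r\le x$, so both bounds read $\simeq r(x+r)^\beta$.

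\emph{Case $r>x$.} Here $B(x,r)=(0,x+r)$, so
$$\rho_\alpha(B(x,r))=\frac{(x+r)^{\beta+1}}{\beta+1}.$$
Since $r<x+r<2r$, we have $(x+r)^{\beta+1}\simeq r(x+r)^\beta$, with constants depending only on $\beta$.

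The two cases together give $\rho_\alpha(B(x,r))\simeq_\alpha r(x+r)^{2\alpha+1}$.

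\emph{Homogeneous type.} By the estimate, $\rho_\alpha(B(x,2r))\lesssim 2r(x+2r)^\beta\le 2^{1+\beta}r(x+r)^\beta\simeq\rho_\alpha(B(x,r))$. So $\rho_\alpha$ is doubling with respect to the Euclidean metric on $\Rp$. It is also a Borel measure that is finite and positive on every ball, so $(\Rp,|x-y|,\rho_\alpha)$ is a space of homogeneous type.

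The only step needing care is the lower bound when $r\le x$ with $r$ close to $x$, where the ball nearly reaches $0$ and $t^\beta$ degenerates for $\beta>0$. Discarding the left part $(x-r,x-r/2)$, as above, handles this.
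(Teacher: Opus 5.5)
Your argument is correct. In the case $r\le x$, discarding $(x-r,x-r/2)$ avoids the degeneracy of $t^{2\alpha+1}$ near $0$; the case $r>x$ is computed exactly; all constants depend only on $\alpha$, and the doubling property follows exactly as you say. The paper states this lemma without proof as a routine calculation. Your verification is the standard one it implicitly relies on, for instance in the forms $\rho_\alpha(I)\simeq r^{2\alpha+2}$ for $x_0\le C_0r$ and $\rho_\alpha(I)\simeq r x_0^{2\alpha+1}$ for $x_0>C_0r$ used in Proposition~\ref{prop:nondegenerate}.
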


{ {For an interval $I\subset\Rp$ and a function $h$ integrable
with respect to $d\rho_\alpha$ on every bounded interval, write}}
$$
        \langle h\rangle_I=\frac{1}{\rho_\alpha(I)}\int_I h\,d\rho_\alpha.
$$

\begin{definition}
Let $1<p<\infty$.  A positive measurable function $U$ belongs to
$A_p(\rho_\alpha)$ if
$$
        [U]_{A_p(\rho_\alpha)}
        =\sup_{I\subset\Rp}
        \left(\frac{1}{\rho_\alpha(I)}\int_I U\,d\rho_\alpha\right)
        \left(\frac{1}{\rho_\alpha(I)}\int_I U^{-1/(p-1)}\,d\rho_\alpha\right)^{p-1}
        <\infty.
$$
The conjugate weight is $U'=U^{1-p'}$.
\end{definition}

\begin{definition}
Let $1<p<\infty$.  A weight $w$ belongs to the Andersen--Kerman class
$A_{p,\alpha}$ if \eqref{eq:intro-AK} is finite.
\end{definition}

{ {In this definition, and throughout the Andersen--Kerman part of
the paper, a weight means an almost everywhere positive measurable function
for which the quantities in the displayed formulas are finite.  We do not
separately require $w\,dx$ to be finite on intervals $(0,r)$.  What the
Andersen--Kerman condition gives is precisely that
$$
        U_w\,d\rho_\alpha=x^{p-2\alpha-1}w\,d\rho_\alpha
$$
is finite on every bounded interval and is an $A_p(\rho_\alpha)$ measure after
conjugation.  This convention is harmless for the operator theorem and is
necessary when $\alpha\le0$.  For instance, the weight
$w_0(x)=x^{2\alpha-1}$ used in the $L^2(\rho_\alpha)$ verification below is not
integrable with respect to $dx$ on $(0,r)$ when $\alpha\le0$, but it satisfies
$U_{w_0}\equiv1$.}}

\begin{proposition}[Prop 2.6, \cite{Wen}]\label{prop:exact-Ap}
Let $1<p<\infty$ and define
$
        U_w(x)=x^{p-2\alpha-1}w(x).
$
Then
\begin{equation}\label{eq:exact-Ap}
        [U_w]_{A_p(\rho_\alpha)}=[w]_{A_{p,\alpha}}.
\end{equation}
In particular, $w\in A_{p,\alpha}$ if and only if
$U_w\in A_p(\rho_\alpha)$.
\end{proposition}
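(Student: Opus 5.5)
The identity \eqref{eq:exact-Ap} is a pointwise computation carried out interval by interval. We show that, for every fixed interval $I\subset\Rp$, the two factors in the $A_p(\rho_\alpha)$ characteristic of $U_w$ coincide exactly with the two factors in \eqref{eq:intro-AK}. Taking the supremum over $I$ then gives equality of the constants, not merely comparability. The equivalence $w\in A_{p,\alpha}\Leftrightarrow U_w\in A_p(\rho_\alpha)$ follows at once, since one side is finite exactly when the other is.

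\emph{First factor.} Since $d\rho_\alpha=x^{2\alpha+1}dx$,
\[
\int_I U_w\,d\rho_\alpha=\int_I x^{p-2\alpha-1}w(x)\,x^{2\alpha+1}\,dx=\int_I x^p w(x)\,dx .
\]
Both sides carry the same normalization $1/\rho_\alpha(I)$, so the first factors agree.

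\emph{Second factor.} This is the only step requiring care, namely tracking the exponents. Write $\sigma=-1/(p-1)=1-p'$. Then
\[
U_w^{\sigma}\,x^{2\alpha+1}=x^{(p-2\alpha-1)\sigma+2\alpha+1}\,w^{\sigma}.
\]
Using $p\sigma=-p'$ and $-(2\alpha+1)\sigma=(2\alpha+1)(p'-1)$, the exponent of $x$ becomes
\[
-p'+(2\alpha+1)(p'-1)+2\alpha+1=-p'+(2\alpha+1)p'=2\alpha p'.
\]
Hence
\[
\int_I U_w^{-1/(p-1)}\,d\rho_\alpha=\int_I x^{2\alpha p'}w^{-1/(p-1)}\,dx,
\]
which is exactly the integrand in the second factor of \eqref{eq:intro-AK}. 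Again the normalizations by $\rho_\alpha(I)$ match, and raising to the power $p-1$ preserves the equality.

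Finally, we check that the supremum ranges over the same family of intervals in both definitions, namely all bounded intervals $I\subset\Rp$, including those adjacent to $0$. Under the paper's convention, the weights $w$ and $U_w$ are a.e. positive and measurable, so both characteristics are well defined in $[0,\infty]$. The per-interval identities hold as equalities in $[0,\infty]$, so no integrability hypothesis on $w\,dx$ near the origin is needed. This is consistent with the remark preceding the proposition.
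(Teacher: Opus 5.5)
Your proof is correct and is essentially the argument behind this statement. The paper does not reprove it (it cites \cite{Wen}), and the identity is exactly the interval-by-interval matching $U_w\,x^{2\alpha+1}=x^pw$ and $U_w^{-1/(p-1)}x^{2\alpha+1}=x^{2\alpha p'}w^{-1/(p-1)}$ under the common normalization $\rho_\alpha(I)$. Since $w>0$ a.e., every factor lies in $(0,\infty]$, so the suprema agree in $[0,\infty]$ and the equivalence $w\in A_{p,\alpha}\Leftrightarrow U_w\in A_p(\rho_\alpha)$ follows as you state.
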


{ {For the two weights in the main theorem, set
\[
        U_\mu(x)=x^{p-2\alpha-1}\mu(x),
        \qquad
        U_\lambda(x)=x^{p-2\alpha-1}\lambda(x).
\]}}
Then
$$
        U_\mu,U_\lambda\in A_p(\rho_\alpha),
        \qquad
        \nu=\left(\frac{U_\mu}{U_\lambda}\right)^{1/p}
             =\left(\frac{\mu}{\lambda}\right)^{1/p}.
$$

Define the conjugated transform
\begin{equation}\label{eq:Rcal-def}
        \mathcal R_\alpha F(x)=\frac{1}{x}R_\alpha(yF(y))(x).
\end{equation}

\begin{proposition}[Prop 2.7, \cite{Wen}]
Let $1<p<\infty$, let $w$ be a positive measurable function,
put $U_w=x^{p-2\alpha-1}w$, and let $F$ be measurable.  Set
$f(x)=xF(x)$.  Then
\begin{equation}\label{eq:norm-input}
        \|f\|_{L^p(w\,dx)}=
        \|F\|_{L^p(U_w\,d\rho_\alpha)},
\end{equation}
with equality understood in $[0,\infty]$.  Whenever $R_\alpha f$ is defined,
\begin{equation}\label{eq:norm-output}
        \|R_\alpha f\|_{L^p(w\,dx)}=
        \|\mathcal R_\alpha F\|_{L^p(U_w\,d\rho_\alpha)}.
\end{equation}
For a locally integrable symbol $b$, define
$\mathcal R_{\alpha,\varepsilon}F=x^{-1}R_{\alpha,\varepsilon}(yF(y))$.
Then
\[
        [b,\mathcal R_{\alpha,\varepsilon}]F(x)
        =\frac1x[b,R_{\alpha,\varepsilon}]f(x).
\]
Consequently, whenever the limiting commutators exist,
\begin{equation}\label{eq:norm-comm}
        \|[b,R_\alpha]f\|_{L^p(w\,dx)}=
        \|[b,\mathcal R_\alpha]F\|_{L^p(U_w\,d\rho_\alpha)}.
\end{equation}
\end{proposition}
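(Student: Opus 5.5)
The statement to prove is Prop 2.7: three norm identities and the conjugation identity for truncated commutators. The plan is a direct computation from the definitions, with no operator theory needed.

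\emph{Step 1: the input identity \eqref{eq:norm-input}.} With $f=xF$ we have
\[
|f|^p w = x^p|F|^p w = |F|^p\, x^{p-2\alpha-1}w\, x^{2\alpha+1},
\]
so $|f|^p w\,dx=|F|^p U_w\,d\rho_\alpha$ pointwise as measures. Integrating over $\Rp$ gives \eqref{eq:norm-input}. Since both sides are integrals of the same nonnegative measurable function, the equality holds in $[0,\infty]$ with no integrability assumptions.

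\emph{Step 2: the output identity \eqref{eq:norm-output}.} Whenever $R_\alpha f$ is defined, \eqref{eq:Rcal-def} gives $R_\alpha f(x)=x\,\mathcal R_\alpha F(x)$. So $R_\alpha f$ is again of the form $x\cdot G$ with $G=\mathcal R_\alpha F$. Applying Step 1 to the pair $(R_\alpha f,\mathcal R_\alpha F)$ yields \eqref{eq:norm-output}.

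\emph{Step 3: the commutator identity.} For truncations, linearity of $R_{\alpha,\varepsilon}$ and the fact that multiplication by $b$ commutes with multiplication by $y$ give
\[
[b,\mathcal R_{\alpha,\varepsilon}]F(x)=b(x)\,x^{-1}R_{\alpha,\varepsilon}(yF)(x)-x^{-1}R_{\alpha,\varepsilon}\bigl(y\,b(y)F(y)\bigr)(x).
\]
Since $yF(y)=f(y)$ and $y\,b(y)F(y)=(bf)(y)$, the right-hand side equals $x^{-1}\bigl(bR_{\alpha,\varepsilon}f-R_{\alpha,\varepsilon}(bf)\bigr)(x)=x^{-1}[b,R_{\alpha,\varepsilon}]f(x)$. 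Letting $\varepsilon\downarrow0$ pointwise wherever the limits exist gives $[b,\mathcal R_\alpha]F=x^{-1}[b,R_\alpha]f$. Applying Step 1 to the pair $([b,R_\alpha]f,[b,\mathcal R_\alpha]F)$ then gives \eqref{eq:norm-comm}.

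\emph{Main obstacle.} The only delicate point is bookkeeping about where things are defined. The limits exist on exactly the same set of $x$, because multiplying by $x^{-1}$ with $x>0$ preserves convergence. Also, $bf$ must be admissible for the truncated operator; this follows from local $\rho_\alpha$-integrability of $b$ when $f\in C_c^\infty(\Rp)$, since then $F=f/x$ is compactly supported away from $0$.
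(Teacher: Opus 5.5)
Your proposal is correct and is the intended argument. It combines the identity of measures $|f|^p w\,dx=|F|^p x^{p-2\alpha-1}w\,x^{2\alpha+1}\,dx=|F|^pU_w\,d\rho_\alpha$, the definitional relation $R_\alpha f=x\,\mathcal R_\alpha F$, the algebraic conjugation $R_{\alpha,\varepsilon}(bf)=R_{\alpha,\varepsilon}\bigl(y\,b(y)F(y)\bigr)$, and the observation that the $\varepsilon\downarrow0$ limits exist simultaneously because $x>0$. The paper quotes this statement from Wen's work without reproducing a proof, and your direct computation is what that citation amounts to.
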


The preceding identities are the only passage from the Andersen--Kerman
Lebesgue formulation to the weighted Calder\'on--Zygmund formulation on
$(\Rp,|x-y|,\rho_\alpha)$.  For a pair of Andersen--Kerman weights
$\mu,\lambda\in A_{p,\alpha}$ one has
\begin{align*}
        \mu,\lambda\in A_{p,\alpha}
        &\Longleftrightarrow U_\mu,U_\lambda\in A_p(\rho_\alpha),\\
        \left(\frac{U_\mu}{U_\lambda}\right)^{1/p}
        &=\left(\frac{\mu}{\lambda}\right)^{1/p},\\
        [b,R_\alpha]:L^p(\mu\,dx)\to L^p(\lambda\,dx)
        &\Longleftrightarrow
        [b,\mathcal R_\alpha]:L^p(U_\mu\,d\rho_\alpha)\to
        L^p(U_\lambda\,d\rho_\alpha),
\end{align*}
with equality of operator norms in the last line.  The map $F\mapsto xF$ is an
isometric isomorphism at both ends of the mapping, so compactness is preserved in both directions. 
{ {More precisely, for a weight $w$, define
\[
        J_w:L^p(U_w\,d\rho_\alpha)\longrightarrow L^p(w\,dx),
        \qquad J_wF(x)=xF(x).
\]
Then $J_w$ is an isometric isomorphism and
\[
        [b,\mathcal R_\alpha]
        =J_\lambda^{-1}[b,R_\alpha]J_\mu.
\]
Thus boundedness and compactness are preserved in both directions, with equal
operator norms.}}
The rest of the paper
therefore proves the corresponding Bloom boundedness and compactness statements
for $\mathcal R_\alpha$ on the conjugated space and then transfers them back by
these identities.

\section{Bloom BMO, VMO and Bessel comparisons}\label{sec:bmo-vmo}
This section records the function-space facts used in the boundedness and
compactness arguments.  We first develop the Bloom BMO space on
$(\Rp,|x-y|,\rho_\alpha)$ and its equivalent weighted oscillation forms.  We
then introduce the corresponding VMO space and prove the density of the natural
class of functions which are smooth and compactly supported in the
$\rho_\alpha$-coordinate.  Finally, we compare the unweighted BMO and VMO
spaces for $d\rho_\alpha$ and the usual Bessel measure $dm_\alpha$, and explain
why $d\rho_\alpha$ is the natural base measure in the full weighted
Andersen--Kerman setting.

\subsection{Bloom BMO space and basic properties}

For an interval $I\subset\Rp$, write
$$
        b_I=\frac{1}{\rho_\alpha(I)}\int_I b\,d\rho_\alpha,
        \qquad
        \nu_\alpha(I)=\int_I\nu\,d\rho_\alpha.
$$

\begin{definition}
Let $\nu$ be a weight on $(\Rp,\rho_\alpha)$.  The Bloom BMO space
$\BMO_{\nu,\alpha}$ consists of all locally integrable functions $b$ such that
\begin{equation}\label{eq:BMO-nu-def}
        \|b\|_{\BMO_{\nu,\alpha}}
        :=\sup_{I\subset\Rp}
          \frac{1}{\nu_\alpha(I)}\int_I |b-b_I|\,d\rho_\alpha<\infty.
\end{equation}
\end{definition}

The next elementary lemma is used repeatedly.  It gives the Bloom weight
algebra used below.

\begin{lemma}\label{lem:Bloom-weight-algebra}
Let $1<p<\infty$ and let $U_\mu,U_\lambda\in A_p(\rho_\alpha)$.  Let
$$
        \nu=\left(\frac{U_\mu}{U_\lambda}\right)^{1/p}.
$$
Then $\nu\in A_2(\rho_\alpha)$ and
\begin{equation}\label{eq:nu-A2}
        [\nu]_{A_2(\rho_\alpha)}
        \le [U_\mu]_{A_p(\rho_\alpha)}^{1/p}
             [U_\lambda]_{A_p(\rho_\alpha)}^{1/p}.
\end{equation}
Moreover, for every interval $I\subset\Rp$,
\begin{align}
        U_\mu(I)^{1/p}U_\lambda'(I)^{1/p'}
        &\lesssim
        \int_I \nu\,d\rho_\alpha, \label{eq:Bloom-product-ineq}\qquad
        U_\lambda(I)^{1/p}U_\mu'(I)^{1/p'}
        \lesssim
        \int_I \nu^{-1}\,d\rho_\alpha.
\end{align}
Here $U_\mu'=U_\mu^{1-p'}$ and $U_\lambda'=U_\lambda^{1-p'}$.  The constants
only depend on $p$ and on the two $A_p(\rho_\alpha)$ characteristics.
\end{lemma}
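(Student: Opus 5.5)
The plan is to run the standard Bloom weight-algebra argument for $A_p$ weights, using only Hölder's inequality and the definition of $A_p(\rho_\alpha)$. Fix an interval $I$ and write $\sigma=p'-1=1/(p-1)$.

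\textbf{The $A_2$ bound.} Here the aim is to bound $\langle\nu\rangle_I\langle\nu^{-1}\rangle_I$ by factoring
\[
\nu=U_\mu^{1/p}U_\lambda^{-1/p},\qquad \nu^{-1}=U_\lambda^{1/p}U_\mu^{-1/p}.
\]
Applying Hölder with exponents $p$ and $p'$ with respect to the probability measure $d\rho_\alpha/\rho_\alpha(I)$ gives
\[
\langle\nu\rangle_I\le\langle U_\mu\rangle_I^{1/p}\langle U_\lambda^{-p'/p}\rangle_I^{1/p'},
\qquad
\langle\nu^{-1}\rangle_I\le\langle U_\lambda\rangle_I^{1/p}\langle U_\mu^{-p'/p}\rangle_I^{1/p'}.
\]
Since $p'/p=\sigma$, we have $U^{-p'/p}=U'$. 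Moreover $\langle U'\rangle_I^{1/p'}=\bigl(\langle U'\rangle_I^{p-1}\bigr)^{1/p}$. Multiplying the two bounds and regrouping gives
\[
\langle\nu\rangle_I\langle\nu^{-1}\rangle_I\le\bigl(\langle U_\mu\rangle_I\langle U_\mu'\rangle_I^{p-1}\bigr)^{1/p}\bigl(\langle U_\lambda\rangle_I\langle U_\lambda'\rangle_I^{p-1}\bigr)^{1/p},
\]
which is \eqref{eq:nu-A2}.

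\textbf{The product inequalities.} Here the Hölder direction is reversed: we need a lower bound on $\int_I\nu\,d\rho_\alpha$. I will write
\[
U_\mu(I)^{1/p}U_\lambda'(I)^{1/p'}=\rho_\alpha(I)\,\langle U_\mu\rangle_I^{1/p}\langle U_\lambda'\rangle_I^{1/p'}.
\]
Using the $A_p$ conditions, $\langle U_\mu\rangle_I\le[U_\mu]\,\langle U_\mu'\rangle_I^{-(p-1)}$ and $\langle U_\lambda'\rangle_I^{p-1}\le[U_\lambda]\,\langle U_\lambda\rangle_I^{-1}$. Hence
\[
\langle U_\mu\rangle_I^{1/p}\langle U_\lambda'\rangle_I^{1/p'}\le C\,\langle U_\mu'\rangle_I^{-1/p'}\langle U_\lambda\rangle_I^{-1/p}.
\]
The right-hand side is controlled by the Hölder estimate for $\nu^{-1}$ from the first step, $\langle\nu^{-1}\rangle_I\le\langle U_\lambda\rangle_I^{1/p}\langle U_\mu'\rangle_I^{1/p'}$, so it is at most $C\langle\nu^{-1}\rangle_I^{-1}$. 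Finally, Cauchy--Schwarz (Jensen) gives $1\le\langle\nu\rangle_I\langle\nu^{-1}\rangle_I$, so $\langle\nu^{-1}\rangle_I^{-1}\le\langle\nu\rangle_I$. Multiplying by $\rho_\alpha(I)$ yields the first inequality in \eqref{eq:Bloom-product-ineq}. The second inequality follows by swapping the roles of $\mu$ and $\lambda$, which replaces $\nu$ by $\nu^{-1}$.

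\textbf{Bookkeeping.} The only delicate point is tracking exponents, in particular $(p-1)/p=1/p'$ and $p'/p=p'-1$. The constant obtained is $[U_\mu]^{1/p}[U_\lambda]^{1/p}$. Finiteness of all the averages involved follows from the $A_p(\rho_\alpha)$ hypothesis, which also covers intervals of the form $(0,r)$.
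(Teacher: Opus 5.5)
Your proposal is correct and follows the paper's proof essentially step for step: Hölder's inequality with exponents $p,p'$ bounds $\langle\nu\rangle_I$ and $\langle\nu^{-1}\rangle_I$, the two bounds are multiplied to give the $A_2$ estimate, and the two $A_p$ conditions are inverted together with $\langle\nu^{-1}\rangle_I^{-1}\le\langle\nu\rangle_I$ to give the product inequality, with the second inequality obtained by swapping $\mu$ and $\lambda$. You also make explicit the Cauchy--Schwarz justification of $1\le\langle\nu\rangle_I\langle\nu^{-1}\rangle_I$, which the paper uses without comment.
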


\begin{proof}
By H\"older's inequality,
$
        \langle\nu\rangle_I
        \le \langle U_\mu\rangle_I^{1/p}
             \langle U_\lambda'\rangle_I^{1/p'},
        \ 
        \langle\nu^{-1}\rangle_I
        \le \langle U_\mu'\rangle_I^{1/p'}
             \langle U_\lambda\rangle_I^{1/p}.
$
Multiplying the two estimates and using the $A_p(\rho_\alpha)$ conditions for
$U_\mu$ and $U_\lambda$ gives \eqref{eq:nu-A2}.

We prove \eqref{eq:Bloom-product-ineq}.  The $A_p(\rho_\alpha)$ condition gives
$$
        \langle U_\mu\rangle_I^{1/p}
        \lesssim \langle U_\mu'\rangle_I^{-1/p'},
        \qquad
        \langle U_\lambda'\rangle_I^{1/p'}
        \lesssim \langle U_\lambda\rangle_I^{-1/p}.
$$
Therefore
$$
        \langle U_\mu\rangle_I^{1/p}
        \langle U_\lambda'\rangle_I^{1/p'}
        \lesssim
        \frac{1}{
        \langle U_\mu'\rangle_I^{1/p'}
        \langle U_\lambda\rangle_I^{1/p}}.
$$
Since
$$
        \langle\nu^{-1}\rangle_I
        \le
        \langle U_\mu'\rangle_I^{1/p'}
        \langle U_\lambda\rangle_I^{1/p},
$$
we obtain
$$
        \langle U_\mu\rangle_I^{1/p}
        \langle U_\lambda'\rangle_I^{1/p'}
        \lesssim \frac{1}{\langle\nu^{-1}\rangle_I}
        \le \langle\nu\rangle_I.
$$
Multiplying by $\rho_\alpha(I)$ proves \eqref{eq:Bloom-product-ineq}.

The second product estimate is just the first estimate applied to the pair $(U_\lambda,U_\mu)$.  The
Bloom weight for that reversed pair is $\nu^{-1}$, so the right-hand side is
$\int_I\nu^{-1}\,d\rho_\alpha$.
\end{proof}

\begin{lemma}\label{lem:Ap-fixed-portion}
Let $1<q<\infty$ and let $w\in A_q(\rho_\alpha)$.  If $E\subset I$ is
measurable, then
\begin{equation}\label{fix portion}
        \frac{\rho_\alpha(E)}{\rho_\alpha(I)}
        \le [w]_{A_q(\rho_\alpha)}^{1/q}
        \left(\frac{w(E)}{w(I)}\right)^{1/q}.
\end{equation}
Consequently, if $\rho_\alpha(E)\ge c\rho_\alpha(I)$, then
$w(I)\lesssim_{c,q,[w]_{A_q}}w(E)$.  
\end{lemma}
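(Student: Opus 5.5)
The plan is to prove \eqref{fix portion} by the standard H\"older argument that underlies every $A_q$ fixed-portion estimate, carried out with respect to $d\rho_\alpha$. Write $\sigma=w^{-1/(q-1)}=w^{1-q'}$. Assume first that $w(E)<\infty$; otherwise the right-hand side is infinite and there is nothing to prove. The starting point is the identity
\[
\rho_\alpha(E)=\int_E w^{1/q}\,w^{-1/q}\,d\rho_\alpha ,
\]
to which we apply H\"older's inequality with exponents $q$ and $q'$. This gives
\[
\rho_\alpha(E)\le \Bigl(\int_E w\,d\rho_\alpha\Bigr)^{1/q}\Bigl(\int_E w^{-q'/q}\,d\rho_\alpha\Bigr)^{1/q'} .
\]
Since $q'/q=1/(q-1)$, the second factor is $\sigma(E)^{1/q'}$, and because $E\subset I$ it is at most $\sigma(I)^{1/q'}$.

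Next we insert the $A_q(\rho_\alpha)$ condition on $I$. It reads
\[
\frac{w(I)}{\rho_\alpha(I)}\Bigl(\frac{\sigma(I)}{\rho_\alpha(I)}\Bigr)^{q-1}\le[w]_{A_q(\rho_\alpha)},
\]
which gives
\[
\sigma(I)^{1/q'}=\sigma(I)^{(q-1)/q}\le [w]_{A_q(\rho_\alpha)}^{1/q}\,\rho_\alpha(I)\,w(I)^{-1/q}.
\]
Here the exponent bookkeeping is $(q-1)/q\cdot q/(q-1)=1$ for the power of $\rho_\alpha(I)$: we get $\rho_\alpha(I)^{1/q}\cdot\rho_\alpha(I)^{(q-1)/q}=\rho_\alpha(I)$. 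Combining this with the H\"older bound yields
\[
\rho_\alpha(E)\le [w]_{A_q(\rho_\alpha)}^{1/q}\,w(E)^{1/q}\,w(I)^{-1/q}\,\rho_\alpha(I).
\]
Dividing by $\rho_\alpha(I)$ gives exactly \eqref{fix portion}.

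One small point is that $w(I)$ must be positive and finite for the division to make sense. Positivity holds because $w>0$ a.e. and $\rho_\alpha(I)>0$. Finiteness holds because, by the convention of this section, the averages in the $A_q(\rho_\alpha)$ characteristic are finite on bounded intervals. Both facts are part of the definition of the class, so no new argument is needed.

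For the consequence, suppose $\rho_\alpha(E)\ge c\,\rho_\alpha(I)$. Raising \eqref{fix portion} to the power $q$ gives
\[
c^q\le [w]_{A_q(\rho_\alpha)}\,w(E)/w(I),
\]
that is, $w(I)\le c^{-q}[w]_{A_q(\rho_\alpha)}\,w(E)$. The constant depends only on $c$, $q$ and $[w]_{A_q}$, as claimed.

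There is no genuine obstacle in this argument. The only points needing care are the exponent arithmetic and the degenerate cases $w(E)=\infty$ or $\rho_\alpha(E)=0$, which are trivial. Lemma~\ref{lem:volume} is not needed, since no doubling is used.
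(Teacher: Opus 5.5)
Your proposal is correct and follows essentially the same route as the paper: H\"older's inequality applied to $\rho_\alpha(E)=\int_E w^{1/q}w^{-1/q}\,d\rho_\alpha$, enlarging the $w^{-1/(q-1)}$ integral from $E$ to $I$, and then inserting the $A_q(\rho_\alpha)$ condition on $I$. The consequence $w(I)\le c^{-q}[w]_{A_q(\rho_\alpha)}\,w(E)$ follows by raising \eqref{fix portion} to the power $q$, as you do.
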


\begin{proof}
By H\"older's inequality,
\begin{align*}
        \rho_\alpha(E)
        &=\int_E w^{1/q}w^{-1/q}\,d\rho_\alpha                                      
        \le w(E)^{1/q}
        \left(\int_I w^{-1/(q-1)}\,d\rho_\alpha\right)^{1/q'}.
\end{align*}
The $A_q(\rho_\alpha)$ condition gives
\begin{equation*}
        \left(\int_I w^{-1/(q-1)}\,d\rho_\alpha\right)^{1/q'}
        \le [w]_{A_q(\rho_\alpha)}^{1/q}
        \frac{\rho_\alpha(I)}{w(I)^{1/q}}.
\end{equation*}
This proves the displayed estimate \eqref{fix portion}.  

The proof is complete.
\end{proof}

\begin{proposition}\label{prop:Bloom-equivalent-forms}
Let $\alpha>-1/2$ and $1<p<\infty$, let
$u,v\in A_p(\rho_\alpha)$, and put
$\eta=(u/v)^{1/p}$.  Let $b$ be integrable with respect to $d\rho_\alpha$
on every bounded interval in $\Rp$.  Then
\begin{align}
        \|b\|_{\BMO_{\eta,\alpha}}
        &\simeq
        \sup_I
        \left(\frac{1}{u(I)}\int_I |b-b_I|^p v\,d\rho_\alpha\right)^{1/p} \label{eq:Bloom-equiv-u-v}\\
        &\simeq
        \sup_I
        \left(\frac{1}{v'(I)}\int_I |b-b_I|^{p'} u'\,d\rho_\alpha\right)^{1/p'}.\label{eq:Bloom-equiv-dual}
\end{align}
Here $u'=u^{1-p'}$, $v'=v^{1-p'}$, and the suprema are over intervals in
$\Rp$.  The equivalence constants depend only on $p$ and on the
$A_p(\rho_\alpha)$ characteristics of $u$ and $v$.
\end{proposition}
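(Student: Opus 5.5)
We will prove Proposition~\ref{prop:Bloom-equivalent-forms} by showing that each of the two right-hand sides is comparable to $\|b\|_{\BMO_{\eta,\alpha}}$, and we treat them in parallel. The dual form \eqref{eq:Bloom-equiv-dual} is the same statement as \eqref{eq:Bloom-equiv-u-v} with $p$ replaced by $p'$ and the pair $(u,v)$ replaced by $(v',u')$. Indeed, $u',v'\in A_{p'}(\rho_\alpha)$ with controlled characteristics, and
\[
(v'/u')^{1/p'}=(u/v)^{(p'-1)/p'}=(u/v)^{1/p}=\eta .
\]
So it suffices to prove \eqref{eq:Bloom-equiv-u-v} for arbitrary $u,v\in A_p(\rho_\alpha)$ and $1<p<\infty$. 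The notation $u(I)$ means $\int_I u\,d\rho_\alpha$.

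\emph{Lower bound, i.e.\ the $\BMO_{\eta,\alpha}$ norm is controlled by the $L^p$ form.} Write $|b-b_I|=|b-b_I|\,v^{1/p}v^{-1/p}$ and apply H\"older's inequality:
\[
\int_I|b-b_I|\,d\rho_\alpha\le\Big(\int_I|b-b_I|^pv\,d\rho_\alpha\Big)^{1/p}v'(I)^{1/p'} .
\]
The right-hand side is at most the supremum in \eqref{eq:Bloom-equiv-u-v} times $u(I)^{1/p}v'(I)^{1/p'}$. By \eqref{eq:Bloom-product-ineq} of Lemma~\ref{lem:Bloom-weight-algebra}, applied with $U_\mu=u$ and $U_\lambda=v$, we have $u(I)^{1/p}v'(I)^{1/p'}\lesssim\int_I\eta\,d\rho_\alpha$. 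This gives the lower bound.

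\emph{Upper bound.} This is the main step: a weighted John--Nirenberg inequality for Bloom BMO on the space of homogeneous type $(\Rp,|x-y|,\rho_\alpha)$ (Lemma~\ref{lem:volume}). I would argue by sparse domination of oscillations, which avoids John--Nirenberg exponential estimates entirely.

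First, I use the standard local estimate on a space of homogeneous type, via a dyadic system of Christ/Hyt\"onen--Kairema type adapted to $\rho_\alpha$ (several adjacent systems, to pass between intervals and dyadic cubes). For each interval $I$ there is a sparse family $\mathcal S$ of dyadic sets inside a dilate $\tilde I$ such that
\[
|b-b_I|\one_I\lesssim\sum_{Q\in\mathcal S}\Big(\frac{1}{\rho_\alpha(Q)}\int_Q|b-b_Q|\,d\rho_\alpha\Big)\one_Q .
\]
Each average in this sum is at most $\|b\|_{\BMO_{\eta,\alpha}}\langle\eta\rangle_Q$. Hence
\[
|b-b_I|\one_I\lesssim\|b\|_{\BMO_{\eta,\alpha}}\,A_{\mathcal S}\eta ,
\qquad A_{\mathcal S}\eta=\sum_{Q\in\mathcal S}\langle\eta\rangle_Q\one_Q .
\]

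Next, the sparse operator $A_{\mathcal S}$ is bounded on $L^p(v\,d\rho_\alpha)$ because $v\in A_p(\rho_\alpha)$. Here we also need $b_I$ versus $b_{\tilde I}$, which is handled by doubling and a localized version of the bound above. It follows that
\[
\int_I|b-b_I|^pv\,d\rho_\alpha\lesssim\|b\|_{\BMO_{\eta,\alpha}}^p\int_{\tilde I}\eta^pv\,d\rho_\alpha
=\|b\|_{\BMO_{\eta,\alpha}}^p\,u(\tilde I),
\]
using $\eta^pv=u$ and the fact that $\eta\one_{\tilde I}$ dominates $\eta\one_Q$ for $Q\subset\tilde I$. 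Finally, $u(\tilde I)\lesssim u(I)$ by the doubling of $A_p(\rho_\alpha)$ weights, which follows from Lemma~\ref{lem:Ap-fixed-portion} with $E=I\subset\tilde I$ together with $\rho_\alpha(\tilde I)\lesssim\rho_\alpha(I)$ from Lemma~\ref{lem:volume}. This closes the upper bound.

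The main obstacle is setting up the local sparse oscillation estimate cleanly on $(\Rp,\rho_\alpha)$. Two points need care: intervals touching $0$, where $\rho_\alpha$ is not translation-comparable, and the transfer between intervals and dyadic cubes. I would rely on the abstract homogeneous-type versions of these estimates (Lerner's local mean oscillation formula and the adjacent dyadic systems) rather than on any Euclidean structure, checking only that every constant depends on the doubling constant of $\rho_\alpha$ and on $[u]_{A_p(\rho_\alpha)}$ and $[v]_{A_p(\rho_\alpha)}$.
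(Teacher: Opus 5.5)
Your proposal is correct, but it takes a different route from the paper.

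The paper never argues on $(\Rp,\rho_\alpha)$ directly. It pushes everything forward by $\Phi(x)=\rho_\alpha((0,x))$, which carries $\rho_\alpha$ exactly onto Lebesgue measure on $\Rp$. It then reflects evenly to $\mathbb R$, checking that the $A_p$ characteristics and both oscillation suprema change only by constants, with intervals crossing $0$ treated separately. Next it applies the Holmes--Lacey--Wick dyadic Bloom equivalence on each of finitely many adjacent grids, and finally passes back from dyadic to arbitrary intervals. Both directions of \eqref{eq:Bloom-equiv-u-v} come from that citation at once. The dual form is obtained exactly as you do, from the pair $(v',u')$ with exponent $p'$.

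You instead prove the two inequalities intrinsically. The easy one follows from H\"older and \eqref{eq:Bloom-product-ineq}. The hard one follows from a sparse bound for $|b-b_I|\one_I$, the bound for sparse operators on $L^p(v\,d\rho_\alpha)$, and the identity $\eta^p v=u$. Your route is shorter, self-contained modulo standard sparse machinery, and works in any space of homogeneous type. The paper's route reduces everything to a published Euclidean theorem, at the cost of the reflection and multi-grid bookkeeping.

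Two remarks on your version:
\begin{enumerate}
\item Most of the obstacles you anticipate disappear if you run Lerner's stopping argument on the bisection lattice of $I$ itself. By Lemma~\ref{lem:volume}, every subinterval, including those of the form $(0,r)$, has parent and children of uniformly comparable $\rho_\alpha$-measure. The sparse family then consists of subintervals of $I$, so $\langle|b-b_Q|\rangle_Q\le\|b\|_{\BMO_{\eta,\alpha}}\langle\eta\rangle_Q$ holds by definition. No adjacent systems, dilate $\tilde I$, or comparison of $b_I$ with $b_{\tilde I}$ are needed.
\item As written, your constants depend on the doubling constant of $\rho_\alpha$, and hence on $\alpha$. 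The statement asserts constants depending only on $p$ and the two characteristics, and the paper's proof gives this because $\Phi$ is exact. You can remove the $\alpha$-dependence by splitting each interval at its $\rho_\alpha$-mass midpoint (the $\Phi$-midpoint) rather than its length midpoint. Every parent/child measure ratio is then exactly $2$.
\end{enumerate}
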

\begin{proof}

We first remove the measure $d\rho_\alpha$.  Define
\[
        \Phi(x)=\rho_\alpha((0,x))
        =\frac{x^{2\alpha+2}}{2\alpha+2},
        \qquad x>0.
\]
Since
$
        \Phi'(x)=x^{2\alpha+1},
$
we have
$
        d\Phi(x)=d\rho_\alpha(x).
$
Moreover, $\Phi$ is an increasing bijection from $\Rp$ onto $\Rp$ and
therefore maps intervals bijectively onto intervals.

Set
\[
        \tilde b=b\circ\Phi^{-1},\qquad
        \tilde u=u\circ\Phi^{-1},\qquad
        \tilde v=v\circ\Phi^{-1},\qquad
        \tilde\eta=\eta\circ\Phi^{-1}
          =\left(\frac{\tilde u}{\tilde v}\right)^{1/p}.
\]
If $I\subset\Rp$ and $J=\Phi(I)$, then
$
        |J|=\rho_\alpha(I),
$
and, with the integrals on the left taken with respect to Lebesgue measure,
\[
        \tilde u(J)=u(I),\qquad
        \tilde v(J)=v(I),\qquad
        \tilde\eta(J)=\eta(I).
\]
Furthermore,
\[
        \tilde b_J
        =\frac1{|J|}\int_J\tilde b(t)\,dt
        =\frac1{\rho_\alpha(I)}
          \int_Ib(x)\,d\rho_\alpha(x)
        =b_I.
\]
Consequently,
\begin{align}
 \frac1{\tilde\eta(J)}\int_J|\tilde b-\tilde b_J|\,dt
 &=
 \frac1{\eta(I)}\int_I|b-b_I|\,d\rho_\alpha,            \label{eq:Phi-Bloom-exact}\\
 \frac1{\tilde u(J)}\int_J|\tilde b-\tilde b_J|^p\tilde v\,dt
 &=
 \frac1{u(I)}\int_I|b-b_I|^pv\,d\rho_\alpha.                 \label{eq:Phi-testing-exact}
\end{align}
Moreover,
\begin{align}
        \|b\|_{\BMO_{\eta,\alpha}}
        =\|\widetilde b\|_{\BMO_{\widetilde\eta}(\Rp,dt)}.\label{eq:Phi-BMO-equivalence}
\end{align}
The $A_p$ characteristics are also preserved exactly:
$
        [\tilde u]_{A_p(\Rp,dt)}
        =[u]_{A_p(\rho_\alpha)},
        \ 
        [\tilde v]_{A_p(\Rp,dt)}
        =[v]_{A_p(\rho_\alpha)}.
$
By Lemma~\ref{lem:Bloom-weight-algebra},
\[
        \tilde \eta=\left(\frac {\tilde u}{\tilde v}\right)^{1/p}\in A_2(\Rp,dt),
\]
with $[\tilde \eta]_{A_2}$ controlled by the $A_p$ characteristics of $\tilde u$ and $\tilde v$.

\medskip
We now pass from the half-line to the real line.  Extend $\tilde u,\tilde v, \tilde b$ evenly:
\[
        \tilde u_{\rm e}(t)=\tilde u(|t|),\qquad
        \tilde v_{\rm e}(t)=\tilde v(|t|),\qquad
        \tilde b_{\rm e}(t)=\tilde b(|t|),
        \qquad t\in\mathbb R.
\]
Then
\[
        \tilde\eta_{\rm e}(t)
        :=\left(\frac{\tilde u_{\rm e}(t)}{\tilde v_{\rm e}(t)}\right)^{1/p}
        =\tilde\eta(|t|).
\]

We first verify the Muckenhoupt conditions after reflection. It is easy to see
$$
	[\tilde u]_{A_p(\Rp, dt)}\le [\tilde u_{\rm e}]_{A_p(\mathbb R,dt)}
$$
since every positive half-line interval is also a real-line interval. For the reverse direction, we want to show that
$$
	\left(\frac1{|J|}\int_J\tilde u_{\rm e}\right)
	 \left(
	 \frac1{|J|}\int_J\tilde u_{\rm e}^{-1/(p-1)}
 	\right)^{p-1}
	\lesssim
	[\tilde u]_{A_p(\Rp, dt)}
$$
holds for every $J\subset\mathbb{R}$.

Note that intervals contained in
either half-line reduce immediately, by reflection, to half-line intervals.
Suppose that $J=(-a,b)$ crosses the origin, and put
$
        R=\max\{a,b\}.
$
Then because $J\subset (-R,R)$ and
$
        |J|=a+b\ge R,
$
\begin{align*}
 &\left(\frac1{|J|}\int_J\tilde u_{\rm e}\right)
 \left(
 \frac1{|J|}\int_J\tilde u_{\rm e}^{-1/(p-1)}
 \right)^{p-1}\le
 \left(\frac{2}{R}\int_0^R\tilde u\right)
 \left(
 \frac{2}{R}\int_0^R\tilde u^{-1/(p-1)}
 \right)^{p-1}\le 2^p[\tilde u]_{A_p(\Rp,dt)}.
\end{align*}
Thus, 
\begin{align}\label{eq:even-Ap}
        &[\tilde u]_{A_p(\Rp, dt)}\le [\tilde u_{\rm e}]_{A_p(\mathbb R,dt)}
        \le 2^p [\tilde u]_{A_p(\Rp, dt)}.
\end{align}
{ 
The same estimate holds for $\tilde v_{\rm e}$.  In particular,
\begin{equation*}
        \tilde u_{\rm e},\tilde v_{\rm e}\in A_p(\mathbb R,dt).
\end{equation*}
By Lemma~\ref{lem:Bloom-weight-algebra},
$\tilde\eta_{\rm e}\in A_2(\mathbb R,dt)$ and
\begin{equation*}
        [\tilde\eta_{\rm e}]_{A_2(\mathbb R,dt)}
        \le
        [\tilde u_{\rm e}]_{A_p(\mathbb R,dt)}^{1/p}
        [\tilde v_{\rm e}]_{A_p(\mathbb R,dt)}^{1/p}
        \lesssim_p
        [u]_{A_p(\rho_\alpha)}^{1/p}
        [v]_{A_p(\rho_\alpha)}^{1/p}.
\end{equation*}
}

We next compare the BMO seminorms on $\Rp$ and $\mathbb R$.  The
half-line seminorms are no larger than the corresponding real-line
seminorms, since every positive half-line interval is also a real-line
interval.

{ For the converse estimate, intervals contained in either open
half-line are handled directly by restriction or reflection.  It remains to
consider an interval $J=(-a,b)$ crossing the origin.}  Again put
$
        R=\max\{a,b\}
$. Let $L=(0,R)$, then
$$
	\tilde\eta_{\rm e}(J)
	=\int_{-a}^b \tilde\eta_{\rm e} dt
	\ge \frac12 \int_{-R}^R \tilde\eta_{\rm e} dt
	=\tilde\eta_{\rm e}(L)
	=\tilde\eta(L).
$$
Moreover, note that
$$
	\int_J |\tilde b_{\rm e}-(\tilde b_{\rm e})_J| dt
	\le 2 \int_J |\tilde b_{\rm e}-c|dt
$$
holds for every constant $c$. Then setting $c=(\tilde b_{\rm e})_L$ gives
$$
	\int_J |\tilde b_{\rm e}-(\tilde b_{\rm e})_J| dt
	\le 2 \int_{-R}^R |\tilde b_{\rm e}-(\tilde b_{\rm e})_L|dt
	= 4 \int_L |\tilde b_{\rm e}-(\tilde b_{\rm e})_L|dt.
$$
Thus, we have
$$
\frac{1}{\tilde\eta_{\rm e}(J)}\int_J |\tilde b_{\rm e}-(\tilde b_{\rm e})_J| dt
	\lesssim
\frac{1}{\tilde\eta(L)}\int_L |\tilde b_{\rm e}-(\tilde b_{\rm e})_L|dt
=
\frac{1}{\tilde\eta(L)}\int_L |\tilde b-(\tilde b)_L|dt
$$
and consequently
{ 
\begin{align}
        \label{eq:BMO-equivalence-even}
        \|\tilde b_{\rm e}\|_{\BMO_{\tilde\eta_{\rm e}}(\mathbb R,dt)}
        \simeq
        \|\tilde b\|_{\BMO_{\tilde\eta}(\Rp,dt)}.
\end{align}
}

To obtain \eqref{eq:Bloom-equiv-u-v}, we use the Holmes–Lacey–Wick equivalences \cite[Theorem~4.1 and Proposition~4.3]{HLW}, which are stated in dyadic form.

It remains to pass from dyadic intervals to all intervals on $\mathbb R$.
Choose a finite family of adjacent dyadic grids
$
        \mathcal D^1,\ldots,\mathcal D^M
$
with the property that, for every bounded interval $L\subset\mathbb R$,
there are $m\in\{1,\ldots,M\}$ and $Q\in\mathcal D^m$ such that
$
        L\subset Q,\ |Q|\le C_0|L|,
$
where $C_0$ is an absolute constant.
Then for each fixed dyadic grid $\mathcal D^m$, the Holmes--Lacey--Wick dyadic Bloom equivalence gives
\begin{align*}
 \sup_{Q\in\mathcal D^m}
 \frac1{\tilde\eta_{\rm e}(Q)}
 \int_Q|\tilde b_{\rm e}-(\tilde b_{\rm e})_Q|dt
 \simeq
 \sup_{Q\in\mathcal D^m}
 \left(
 \frac1{\tilde u_{\rm e}(Q)}
 \int_Q|\tilde b_{\rm e}-(\tilde b_{\rm e})_Q|^p\tilde v_{\rm e}dt
 \right)^{1/p}.
\end{align*}

We claim that
\begin{align}
	\label{eq:Bloom-comparison}
	\sup_{I\subset\mathbb{R}}
 \frac1{\tilde\eta_{\rm e}(I)}
 \int_I|\tilde b_{\rm e}-(\tilde b_{\rm e})_I|dt
 \lesssim
 \max_{1\le m\le M}\sup_{Q\in\mathcal D^m}
 \frac1{\tilde\eta_{\rm e}(Q)}
 \int_Q|\tilde b_{\rm e}-(\tilde b_{\rm e})_Q|dt
\end{align}
and
\begin{align}
	\label{eq:testing-comparison}
\sup_{I\subset\mathbb{R}}
 \left(
 \frac1{\tilde u_{\rm e}(I)}
 \int_I|\tilde b_{\rm e}-(\tilde b_{\rm e})_I|^p\tilde v_{\rm e}dt
 \right)^{1/p}
 \lesssim
 \max_{1\le m\le M}\sup_{Q\in\mathcal D^m}
 \left(
 \frac1{\tilde u_{\rm e}(Q)}
 \int_Q|\tilde b_{\rm e}-(\tilde b_{\rm e})_Q|^p\tilde v_{\rm e}dt
 \right)^{1/p}.
\end{align}

For a bounded interval $I\subset\mathbb{R}$ fixed, we find the corresponding $m\in\{1,\ldots,M\}$ and $Q\in\mathcal D^m$ such that
$I\subset Q,\ |Q|\le C_0|I|$. We obtain that $\tilde\eta_{\rm e}(Q)\lesssim \tilde\eta_{\rm e}(I)$ and $\tilde u_{\rm e}(Q)\lesssim \tilde u_{\rm e}(I)$ because of the $A_p$ doubling property. Then \eqref{eq:Bloom-comparison} follows from
$$
\int_I|\tilde b_{\rm e}-(\tilde b_{\rm e})_I|dt\le 2\int_Q|\tilde b_{\rm e}-(\tilde b_{\rm e})_Q|dt.
$$

{ 
For \eqref{eq:testing-comparison}, it suffices to prove
\begin{equation*}
        \int_I|\tilde b_{\rm e}-(\tilde b_{\rm e})_I|^p\tilde v_{\rm e}
        \lesssim_{p,[\tilde v_{\rm e}]_{A_p(\mathbb R,dt)}}
        \int_Q|\tilde b_{\rm e}-(\tilde b_{\rm e})_Q|^p\tilde v_{\rm e}.
\end{equation*}
Indeed, H\"older's inequality gives
\begin{align*}
 |(\tilde b_{\rm e})_I-(\tilde b_{\rm e})_Q|
 &\le \frac1{|I|}\int_I
       |\tilde b_{\rm e}-(\tilde b_{\rm e})_Q|\,dt 
       \le \frac1{|I|}
 \left(\int_I|\tilde b_{\rm e}-(\tilde b_{\rm e})_Q|^p
              \tilde v_{\rm e}\,dt\right)^{1/p}
 \left(\int_I \tilde v_{\rm e}^{-1/(p-1)}\,dt\right)^{1/p'}.
\end{align*}
After raising this estimate to the power $p$, we obtain
\begin{align*}
 \tilde v_{\rm e}(I)
 |(\tilde b_{\rm e})_I-(\tilde b_{\rm e})_Q|^p 
 &\le
 \frac{\tilde v_{\rm e}(I)}{|I|^p}
 \left(\int_I\tilde v_{\rm e}^{-1/(p-1)}\,dt\right)^{p-1}
 \int_I|\tilde b_{\rm e}-(\tilde b_{\rm e})_Q|^p
         \tilde v_{\rm e}\,dt \\
 &\le
 [\tilde v_{\rm e}]_{A_p(\mathbb R,dt)}
 \int_I|\tilde b_{\rm e}-(\tilde b_{\rm e})_Q|^p
         \tilde v_{\rm e}\,dt.
\end{align*}
Consequently,
\begin{align*}
 \int_I|\tilde b_{\rm e}-(\tilde b_{\rm e})_I|^p
          \tilde v_{\rm e}\,dt 
 &\le
 2^{p-1}\int_I|\tilde b_{\rm e}-(\tilde b_{\rm e})_Q|^p
                 \tilde v_{\rm e}\,dt
 +2^{p-1}\tilde v_{\rm e}(I)
 |(\tilde b_{\rm e})_I-(\tilde b_{\rm e})_Q|^p \\
 &\lesssim_{p,[\tilde v_{\rm e}]_{A_p(\mathbb R,dt)}}
 \int_Q|\tilde b_{\rm e}-(\tilde b_{\rm e})_Q|^p
        \tilde v_{\rm e}\,dt.
\end{align*}

We also compare the testing quantities on the half-line and on the real line.
The half-line supremum is no larger than the real-line supremum.  For the
reverse estimate, fix an interval $I\subset\mathbb R$.  If $I$ is contained
in $(0,\infty)$, its testing quantity is already a half-line testing
quantity.  If $I$ is contained in $(-\infty,0)$, reflection gives the same
conclusion.

It remains to consider an interval $I=(-a,b)$ with $a,b>0$.  Put
\begin{equation*}
        R=\max\{a,b\},\qquad K=(-R,R),\qquad L=(0,R).
\end{equation*}
Then $I\subset K$ and $|K|\le2|I|$.  Since
$\tilde u_{\rm e}\in A_p(\mathbb R,dt)$, its doubling property gives
\begin{equation*}
        \tilde u_{\rm e}(K)\lesssim\tilde u_{\rm e}(I).
\end{equation*}
The centre-change estimate proved above, now applied to $I\subset K$, gives
\begin{equation*}
        \int_I|\tilde b_{\rm e}-(\tilde b_{\rm e})_I|^p
        \tilde v_{\rm e}\,dt
        \lesssim
        \int_K|\tilde b_{\rm e}-(\tilde b_{\rm e})_K|^p
        \tilde v_{\rm e}\,dt.
\end{equation*}
Therefore
\begin{align*}
 &\left(
 \frac{1}{\tilde u_{\rm e}(I)}
 \int_I|\tilde b_{\rm e}-(\tilde b_{\rm e})_I|^p
        \tilde v_{\rm e}\,dt
 \right)^{1/p} \lesssim
 \left(
 \frac{1}{\tilde u_{\rm e}(K)}
 \int_K|\tilde b_{\rm e}-(\tilde b_{\rm e})_K|^p
        \tilde v_{\rm e}\,dt
 \right)^{1/p} 
 =
 \left(
 \frac{1}{\tilde u(L)}
 \int_L|\tilde b-(\tilde b)_L|^p\tilde v\,dt
 \right)^{1/p}.
\end{align*}
Here the last equality follows from evenness:
$\tilde u_{\rm e}(K)=2\tilde u(L)$,
$(\tilde b_{\rm e})_K=(\tilde b)_L$, and the integral over $K$ is twice
the corresponding integral over $L$.  We have proved
\begin{align}
\label{eq:testing-comparison-even}
\sup_{I\subset\mathbb R}
 \left(
 \frac1{\tilde u_{\rm e}(I)}
 \int_I|\tilde b_{\rm e}-(\tilde b_{\rm e})_I|^p
        \tilde v_{\rm e}\,dt
 \right)^{1/p}
 \lesssim
 \sup_{J\subset\Rp}
 \left(
 \frac1{\tilde u(J)}
 \int_J|\tilde b-(\tilde b)_J|^p\tilde v\,dt
 \right)^{1/p}.
\end{align}
}

{ 
Combining the Holmes--Lacey--Wick dyadic Bloom equivalence with
\eqref{eq:Bloom-comparison} and
\eqref{eq:testing-comparison}, we obtain
\begin{equation*}
 \|\tilde b_{\rm e}\|_{\BMO_{\tilde\eta_{\rm e}}(\mathbb R,dt)}
 \simeq
 \sup_{I\subset\mathbb R}
 \left(
 \frac1{\tilde u_{\rm e}(I)}
 \int_I|\tilde b_{\rm e}-(\tilde b_{\rm e})_I|^p
        \tilde v_{\rm e}\,dt
 \right)^{1/p}.
\end{equation*}
Using both \eqref{eq:BMO-equivalence-even} and
\eqref{eq:testing-comparison-even}, together with the trivial reverse testing
inequality obtained by restricting to positive intervals, gives
\begin{equation*}
 \|\tilde b\|_{\BMO_{\tilde\eta}(\Rp,dt)}
 \simeq
 \sup_{J\subset\Rp}
 \left(
 \frac1{\tilde u(J)}
 \int_J|\tilde b-(\tilde b)_J|^p\tilde v\,dt
 \right)^{1/p}.
\end{equation*}
}
Finally, the identities
\eqref{eq:Phi-testing-exact} and \eqref{eq:Phi-BMO-equivalence}
 give
\[
 \|b\|_{\BMO_{\eta,\alpha}}
 \simeq
 \sup_{I\subset\Rp}
 \left(
 \frac1{u(I)}
 \int_I|b-b_I|^pv\,d\rho_\alpha
 \right)^{1/p}.
\]
This proves \eqref{eq:Bloom-equiv-u-v}.

For the dual form, recall that
$
        u'=u^{1-p'},\ v'=v^{1-p'}.
$
Since $u,v\in A_p(\rho_\alpha)$, we have
$
        u',v'\in A_{p'}(\rho_\alpha).
$
Moreover,
\begin{align*}
 \left(\frac{v'}{u'}\right)^{1/p'}
 &=
 \left(
 \frac{v^{-1/(p-1)}}{u^{-1/(p-1)}}
 \right)^{1/p'}
 =
 \left(\frac uv\right)^{1/((p-1)p')}
 =
 \left(\frac uv\right)^{1/p}
 =\eta,
\end{align*}
because $(p-1)p'=p$.  Applying the already proved first equivalence
with exponent $p'$ and with the ordered pair of weights $(v',u')$
gives
\[
 \|b\|_{\BMO_{\eta,\alpha}}
 \simeq
 \sup_{I\subset\Rp}
 \left(
 \frac1{v'(I)}
 \int_I|b-b_I|^{p'}u'\,d\rho_\alpha
 \right)^{1/p'}.
\]
This proves \eqref{eq:Bloom-equiv-dual} and completes the proof.
\end{proof}

In the Andersen--Kerman application, these equivalent forms contain no extra
power of $x$.  With $u=U_\mu$, $v=U_\lambda$, and
$\nu=(u/v)^{1/p}$, Proposition~\ref{prop:Bloom-equivalent-forms} gives
\begin{align*}
        \|b\|_{\BMO_{\nu,\alpha}}
        &\simeq
        \sup_I
        \left(\frac{1}{U_\mu(I)}
        \int_I |b-b_I|^p U_\lambda\,d\rho_\alpha\right)^{1/p}
        \simeq
        \sup_I
        \left(\frac{1}{U_\lambda'(I)}
        \int_I |b-b_I|^{p'} U_\mu'\,d\rho_\alpha\right)^{1/p'}.
\end{align*}
The original definition \eqref{eq:BMO-nu-def} is more transparent for the
lower estimate, while the two displayed forms are used in the sparse upper
estimate.  Since
$$
        \nu=\left(\frac{\mu}{\lambda}\right)^{1/p}
        =\left(\frac{U_\mu}{U_\lambda}\right)^{1/p},
$$
the common Andersen--Kerman conjugation of the two weights leaves the Bloom weight itself unchanged. Only the base measure in the oscillation changes.

Let $w\in A_2(\Rp,dt)$.  Let $\VMO_w(\Rp)$ be defined by the three conditions
obtained from Definition~\ref{def:VMO-nu} with $dt$ in place of
$d\rho_\alpha$.  

\begin{lemma}\label{lem:Phi-BMO-transfer}
Let $\alpha>-1/2$, and let
$
        \Phi(x)=\rho_\alpha((0,x))=\frac{x^{2\alpha+2}}{2\alpha+2},
        \ x>0.
$
Let $\nu$ be a weight on $(\Rp,\rho_\alpha)$, and let $b$ be integrable with
respect to $d\rho_\alpha$ on every bounded interval in $\Rp$.  Define
\[
        \widetilde b=b\circ\Phi^{-1},
        \qquad
        \widetilde\nu=\nu\circ\Phi^{-1}.
\]
Then
$
        \|b\|_{\BMO_{\nu,\alpha}}
        =\|\widetilde b\|_{\BMO_{\widetilde\nu}(\Rp)}.
$
\end{lemma}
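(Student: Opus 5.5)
The identity is an exact change of variables, essentially the one already used in the proof of Proposition~\ref{prop:Bloom-equivalent-forms}. We will make it explicit at the level of individual intervals and then take suprema.

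First we record the properties of $\Phi$. Since $\Phi'(x)=x^{2\alpha+1}>0$ and $2\alpha+2>0$, the map $\Phi$ is a $C^1$ increasing bijection of $\Rp$ onto $\Rp$ with $\Phi(0^+)=0$ and $\Phi(\infty)=\infty$. Hence $I\mapsto J=\Phi(I)$ is a bijection between bounded intervals of $\Rp$, including those of the form $(0,r)$, and bounded intervals of $\Rp$. For any nonnegative or $\rho_\alpha$-integrable $g$ on $I$, the substitution $t=\Phi(x)$, $dt=x^{2\alpha+1}\,dx=d\rho_\alpha(x)$, gives
\[
        \int_J g\circ\Phi^{-1}(t)\,dt=\int_I g(x)\,d\rho_\alpha(x).
\]

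Next we apply this formula three times. With $g\equiv1$ we get $|J|=\rho_\alpha(I)$. With $g=\nu$ we get $\widetilde\nu(J)=\nu_\alpha(I)$. With $g=b$ we get that $\widetilde b$ is Lebesgue integrable on every bounded interval, so $\widetilde b$ is an admissible symbol, and moreover $\widetilde b_J=b_I$. Finally, taking $g=|b-b_I|$ and noting that $|b-b_I|\circ\Phi^{-1}=|\widetilde b-\widetilde b_J|$, we obtain
\[
        \frac{1}{\widetilde\nu(J)}\int_J|\widetilde b-\widetilde b_J|\,dt
        =\frac{1}{\nu_\alpha(I)}\int_I|b-b_I|\,d\rho_\alpha .
\]
This is \eqref{eq:Phi-Bloom-exact}.

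Since $I\leftrightarrow J$ is a bijection of the families of intervals over which the two suprema are taken, the suprema coincide, including the case where both are $+\infty$. This gives the claimed equality. There is no genuine obstacle here. The only points needing care are that $\Phi$ maps endpoint intervals $(0,r)$ to intervals $(0,\Phi(r))$, so the local integrability convention is preserved in both directions, and that the measurability of $\widetilde b$ and $\widetilde\nu$ follows because $\Phi^{-1}$ is a homeomorphism.
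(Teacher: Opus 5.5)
Your proof is correct and follows the same route as the paper. The substitution $t=\Phi(x)$ with $dt=d\rho_\alpha$ gives $\widetilde b_J=b_I$, $\widetilde\nu(J)=\nu_\alpha(I)$ and equal oscillation quotients interval by interval (the paper simply cites the computation behind \eqref{eq:Phi-Bloom-exact}), and the suprema then agree because $I\mapsto\Phi(I)$ is a bijection of bounded intervals. One small refinement: a homeomorphism alone does not guarantee that $b\circ\Phi^{-1}$ is Lebesgue measurable; this holds because $\Phi$ is Lipschitz on each $(0,R)$ and therefore maps null sets to null sets.
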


\begin{proof}
For an interval $J\subset\Rp$, write
\[
        \widetilde\nu(J)=\int_J\widetilde\nu(t)\,dt,
        \qquad
        \widetilde b_J=\frac1{|J|}\int_J\widetilde b(t)\,dt,
\]
and set
\[
        \Omega_{\widetilde\nu}(\widetilde b;J)
        =\frac1{\widetilde\nu(J)}
         \int_J|\widetilde b-\widetilde b_J|\,dt.
\]
If $I\subset\Rp$ and $J=\Phi(I)$, then the calculation leading to
\eqref{eq:Phi-Bloom-exact}, with $\eta$ replaced by $\nu$, gives
\begin{equation}\label{eq:Phi-oscillation-exact}
        \widetilde b_J=b_I,
        \qquad
        \widetilde\nu(J)=\nu_\alpha(I),
        \qquad
        \Omega_{\widetilde\nu}(\widetilde b;J)
        =\omega_\nu(b;I).
\end{equation}
Taking the supremum over all intervals proves the BMO norm identity.
\end{proof}

\begin{lemma}\label{lem:halfline-reflection-BMO}
Let $w\in A_2(\Rp,dt)$, and let $h$ be integrable on every bounded interval
in $\Rp$.  Define the even extensions
$
        w_{\rm e}(t)=w(|t|),\ h_{\rm e}(t)=h(|t|),\ t\in\mathbb R.
$
Then $w_{\rm e}\in A_2(\mathbb R)$.  Moreover, for every even function
$g_{\rm e}(t)=g(|t|)$ with $g$ integrable on every bounded interval in $\Rp$,
\begin{equation}\label{eq:reflect-BMO-equivalence}
        \|g_{\rm e}\|_{\BMO_{w_{\rm e}}(\mathbb R)}
        \simeq \|g\|_{\BMO_w(\Rp)}.
\end{equation}
The constants depend only on the $A_2$ characteristic of $w$.  
\end{lemma}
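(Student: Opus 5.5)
The plan is to repeat, for a general $A_2$ weight on the half-line, the two reflection arguments already carried out inside the proof of Proposition~\ref{prop:Bloom-equivalent-forms} for $\tilde u_{\rm e}$ and for the Bloom oscillation. First I show $w_{\rm e}\in A_2(\mathbb R)$ with $[w]_{A_2(\Rp)}\le[w_{\rm e}]_{A_2(\mathbb R)}\le 4[w]_{A_2(\Rp)}$. The lower bound holds because positive intervals are real-line intervals, and intervals in $(-\infty,0)$ reduce to positive ones by reflection. For $J=(-a,b)$ with $a,b>0$, put $R=\max\{a,b\}$. Then $J\subset(-R,R)$ and $|J|\ge R$, and evenness gives
\[
\frac1{|J|}\int_J w_{\rm e}\le\frac2R\int_0^R w,
\qquad
\frac1{|J|}\int_J w_{\rm e}^{-1}\le\frac2R\int_0^R w^{-1},
\]
so the $A_2$ product over $J$ is at most $4[w]_{A_2(\Rp)}$. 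This is exactly the estimate \eqref{eq:even-Ap} with $p=2$.

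Next I prove the BMO equivalence \eqref{eq:reflect-BMO-equivalence}. The bound $\|g\|_{\BMO_w(\Rp)}\le\|g_{\rm e}\|_{\BMO_{w_{\rm e}}(\mathbb R)}$ is again trivial by restriction, and for intervals in the negative half-line I reflect. For a crossing interval $J=(-a,b)$, set $R=\max\{a,b\}$ and $L=(0,R)$. I will use two facts. The first is the weight lower bound
\[
w_{\rm e}(J)\ge\max\{w(0,a),w(0,b)\}=w(L).
\]
The second is the constant-replacement inequality $\int_J|g_{\rm e}-(g_{\rm e})_J|\le2\int_J|g_{\rm e}-c|$, applied with $c=g_L$. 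Extending the integral to $(-R,R)$ and using evenness then gives
\[
\int_J|g_{\rm e}-(g_{\rm e})_J|\,dt\le4\int_L|g-g_L|\,dt.
\]
Dividing by $w_{\rm e}(J)\ge w(L)$ yields the oscillation bound over $J$ by $4\|g\|_{\BMO_w(\Rp)}$. In fact this direction needs no $A_2$ hypothesis at all, so the constants are absolute. The $A_2$ dependence enters only through the membership $w_{\rm e}\in A_2(\mathbb R)$.

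The only point needing care is the weight comparison for the crossing interval. For a general weight one cannot bound $w_{\rm e}(J)$ below by half of $w_{\rm e}(-R,R)$, because the shorter side may carry little mass. The inequality above avoids this: the longer side of $J$ alone equals $w(L)$ by evenness. The remaining point is to note that $h_{\rm e}$ plays no role beyond notation, so the statement is really about $g$.
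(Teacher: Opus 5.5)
Your proposal is correct and is essentially the paper's own argument. The paper proves this lemma by citing \eqref{eq:even-Ap} with $p=2$ and \eqref{eq:BMO-equivalence-even}. Both are obtained in the proof of Proposition~\ref{prop:Bloom-equivalent-forms} by the same reflection, the same choice $R=\max\{a,b\}$, $L=(0,R)$, and the same constant-replacement trick that you use.

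One small correction to your closing remark. The longer side of $J$ has mass $w(L)=\tfrac12 w_{\rm e}((-R,R))$, so the lower bound of $w_{\rm e}(J)$ by half of $w_{\rm e}((-R,R))$ does hold for every weight. It is exactly the inequality you use, and the paper states it in that form.
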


\begin{proof}
We have already proved the equivalence of $A_p$ characteristics and BMO seminorm after even extensions in the proof of Proposition \ref{prop:Bloom-equivalent-forms}. Consequently, $w_{\rm e}\in A_2(\mathbb R)$ follows directly from \eqref{eq:even-Ap} and \eqref{eq:reflect-BMO-equivalence} follows from \eqref{eq:BMO-equivalence-even} with $p=2$.
\end{proof}

\subsection{Bloom VMO space and density of smooth functions}
\begin{definition}\label{def:VMO-nu}
For $b\in\BMO_{\nu,\alpha}$ define
$$
        \omega_\nu(b;I)
        =\frac{1}{\nu_\alpha(I)}\int_I |b-b_I|\,d\rho_\alpha.
$$
We say that $b\in\VMO_{\nu,\alpha}$ if
\begin{align*}
        \lim_{r\to0}\sup_{|I|\le r}\omega_\nu(b;I)=0,\quad
        \lim_{r\to\infty}\sup_{|I|\ge r}\omega_\nu(b;I)=0,\quad
        \lim_{R\to\infty}\sup_{I\subset(R,\infty)}\omega_\nu(b;I)=0.
\end{align*}
Here $|I|$ is the ordinary Euclidean length of $I$.
\end{definition}

\begin{lemma}\label{lem:Phi-VMO-transfer}
Let $\alpha>-1/2$, and let
$
        \Phi(x)=\rho_\alpha((0,x))=\frac{x^{2\alpha+2}}{2\alpha+2},
        \ x>0.
$
Let $\nu$ be a weight on $(\Rp,\rho_\alpha)$, and let $b$ be integrable with
respect to $d\rho_\alpha$ on every bounded interval in $\Rp$.  Define
\[
        \widetilde b=b\circ\Phi^{-1},
        \qquad
        \widetilde\nu=\nu\circ\Phi^{-1}.
\]
Then $b\in\VMO_{\nu,\alpha}$ 
if and only if { {$\widetilde b\in\VMO_{\widetilde\nu}(\Rp)$}}.
\end{lemma}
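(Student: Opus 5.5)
The plan rests on the exact identity \eqref{eq:Phi-oscillation-exact} from the proof of Lemma~\ref{lem:Phi-BMO-transfer}: for every interval $I\subset\Rp$ and $J=\Phi(I)$ we have $\Omega_{\widetilde\nu}(\widetilde b;J)=\omega_\nu(b;I)$. Lemma~\ref{lem:Phi-BMO-transfer} also gives $b\in\BMO_{\nu,\alpha}\iff\widetilde b\in\BMO_{\widetilde\nu}(\Rp)$. Since $\Phi$ is an increasing bijection of $\Rp$ mapping intervals onto intervals, the oscillations on the two sides are literally the same numbers, only indexed differently. The whole proof therefore reduces to a geometric comparison: the three vanishing regimes of Definition~\ref{def:VMO-nu}, measured by the Euclidean length $|I|$ in the $x$-variable, must match the three regimes for $\VMO_{\widetilde\nu}(\Rp)$, measured by $|J|=\rho_\alpha(I)$.

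The regimes do not correspond one-to-one, so I will compare the three conditions as a package rather than individually. The basic tool is Lemma~\ref{lem:volume}: $|J|=\rho_\alpha(I)\simeq|I|\,(\sup I)^{2\alpha+1}$ with $\sup I\ge|I|$. Three consequences follow.

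First, $|J|\gtrsim|I|^{2\alpha+2}$. Hence $|J|\to0$ forces $|I|\to0$, and $|I|\ge r$ forces $|J|\gtrsim r^{2\alpha+2}$.

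Second, $I\subset(R,\infty)$ if and only if $J\subset(\Phi(R),\infty)$. So the far-away conditions correspond exactly, with $\Phi(R)\to\infty$.

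Third, $\Phi$ and $\Phi^{-1}$ are uniformly continuous on $[0,M]$ and $[0,\Phi(M)]$ respectively, since both extend continuously to $0$. Hence for intervals contained in $(0,M)$, $|I|\to0$ if and only if $|J|\to0$, uniformly in position.

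Now assume $\widetilde b\in\VMO_{\widetilde\nu}(\Rp)$ and prove the three conditions for $b$.

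Large intervals: if $|I|\ge r$, then $|J|\gtrsim r^{2\alpha+2}\to\infty$, so the large-interval condition for $\widetilde b$ applies.

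Far intervals: these transfer directly by the second consequence.

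Small intervals: given $\varepsilon>0$, first use the far-away condition for $\widetilde b$ to choose $R$ with $\omega_\nu(b;I)<\varepsilon$ whenever $I\subset(R,\infty)$. If $|I|\le r\le1$, then either $I\subset(R,\infty)$, or $I\subset(0,R+1)$. In the latter case uniform continuity of $\Phi$ on $[0,R+1]$ makes $|J|$ small once $r$ is small, and the small-interval condition for $\widetilde b$ applies.

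The converse runs symmetrically. Small $|J|$ gives small $|I|$ by the first consequence. Far-away intervals transfer directly. For $|J|\ge s$, either $I$ meets $(0,R]$ or $I\subset(R,\infty)$. In the second case the far condition for $b$ handles $I$. In the first case, $I$ meets $(0,R]$ and $\rho_\alpha(I)\ge s$ with $s$ huge, so $\sup I\to\infty$ and therefore $|I|\ge\sup I-R\to\infty$; the large condition for $b$ then applies.

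I expect the main obstacle to be exactly this mismatch. Euclidean-short intervals far from the origin have huge $\rho_\alpha$-measure, and $\rho_\alpha$-large intervals may be Euclidean-short. The case splits above, which use the far-away condition to absorb the intervals that fall between the regimes, are where care is needed, in particular choosing $R$ before $r$ or $s$. The endpoint $0$ causes no difficulty, because $\Phi$ extends to a homeomorphism of $[0,\infty)$.
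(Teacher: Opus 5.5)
Your proposal is correct and follows essentially the same route as the paper: the exact identity $\Omega_{\widetilde\nu}(\widetilde b;\Phi(I))=\omega_\nu(b;I)$, the lower bound $|\Phi(I)|\ge\Phi(|I|)$ (you obtain it from Lemma~\ref{lem:volume}, the paper from convexity of $x\mapsto x^{2\alpha+2}$), uniform continuity of $\Phi$ on $[0,R+1]$, and the far-away condition to absorb intervals that fall between the small and large regimes. The only difference is that in the direction $b\in\VMO_{\nu,\alpha}\Rightarrow\widetilde b\in\VMO_{\widetilde\nu}(\Rp)$, for small $|J|$ you use $|J|\gtrsim|I|^{2\alpha+2}$ directly, which is slightly cleaner than the paper's case split through $m_{r,R}$.
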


\begin{proof}
For an interval $J\subset\Rp$, write
\[
        \widetilde\nu(J)=\int_J\widetilde\nu(t)\,dt,
        \qquad
        \widetilde b_J=\frac1{|J|}\int_J\widetilde b(t)\,dt,
\]
and set
\[
        \Omega_{\widetilde\nu}(\widetilde b;J)
        =\frac1{\widetilde\nu(J)}
         \int_J|\widetilde b-\widetilde b_J|\,dt.
\]
Then \eqref{eq:Phi-oscillation-exact} follows. 
It remains to compare the three vanishing terms.  Extend $\Phi$
continuously to $[0,\infty)$ by setting $\Phi(0)=0$.  Then $\Phi$ is an
increasing homeomorphism of $[0,\infty)$ onto itself.

Assume first that $b\in\VMO_{\nu,\alpha}$, and fix $\varepsilon>0$.  There are
numbers
$
        0<r<1<L,
        \
        R>1,
$
such that
\begin{equation}\label{eq:Phi-source-VMO-thresholds}
        \omega_\nu(b;I)<\varepsilon
\end{equation}
whenever
$
        |I|\le r,
        \ \text{or}\ 
        |I|\ge L,
        \ \text{or}\ 
        I\subset(R,\infty).
$

Define
\[
        m_{r,R}
        =\min_{0\le s\le R}
          \bigl(\Phi(s+r)-\Phi(s)\bigr)>0,\quad
          M_{L,R}
        =\max_{0\le s\le R}
          \bigl(\Phi(s+L)-\Phi(s)\bigr)<\infty.
\]
Let $J=\Phi(I)$ satisfy $|J|<m_{r,R}$, where $I=(a,b)$.  If $a\ge R$,
then $I\subset(R,\infty)$.  If $a<R$ and $b-a\ge r$, then
\[
        |J|=\Phi(b)-\Phi(a)
        \ge \Phi(a+r)-\Phi(a)
        \ge m_{r,R},
\]
which is a contradiction.  Thus either $I\subset(R,\infty)$ or $|I|<r$.
Equations \eqref{eq:Phi-source-VMO-thresholds} and
\eqref{eq:Phi-oscillation-exact} therefore show that
\[
 \lim_{\delta\downarrow0}
 \sup_{J\subset\Rp,|J|\le\delta}
 \Omega_{\widetilde\nu}(\widetilde b;J)=0.
\]
Indeed, for the fixed $\varepsilon$ above one may take
$\delta=m_{r,R}/2$.

If $J=\Phi(I)$ satisfies $|J|>M_{L,R}$, then either the left endpoint of
$I$ is at least $R$, or $|I|>L$. Otherwise, writing $I=(a,b)$, we would have
\[
        |J|=\Phi(b)-\Phi(a)
        \le \Phi(a+L)-\Phi(a)
        \le M_{L,R}.
\]
It follows again from \eqref{eq:Phi-source-VMO-thresholds} and
\eqref{eq:Phi-oscillation-exact} that
\[
 \lim_{A\to\infty}
 \sup_{J\subset\Rp,|J|\ge A}
 \Omega_{\widetilde\nu}(\widetilde b;J)=0.
\]
For the fixed $\varepsilon$, it is enough to take
$A=M_{L,R}+1$.

Finally, if $J\subset(\Phi(R),\infty)$, then
$\Phi^{-1}(J)\subset(R,\infty)$.  Since $\Phi(R)\to\infty$ as
$R\to\infty$, it follows that
\[
 \lim_{T\to\infty}
 \sup_{J\subset(T,\infty)}
 \Omega_{\widetilde\nu}(\widetilde b;J)=0.
\]
Hence $\widetilde b\in\VMO_{\widetilde\nu}(\Rp)$.

Conversely, suppose that $\widetilde b\in\VMO_{\widetilde\nu}(\Rp)$, and
fix $\varepsilon>0$.  Choose
$
        0<\delta<1<A,
        \
        S>1,
$
so that
\begin{equation}\label{eq:Phi-target-VMO-thresholds}
 \Omega_{\widetilde\nu}(\widetilde b;J)<\varepsilon
\end{equation}
whenever
$
        |J|\le\delta,
        \ \text{or}\ 
        |J|\ge A,
        \ \text{or}\ 
        J\subset(S,\infty).
$

Put $R=\Phi^{-1}(S)$.  By uniform continuity of $\Phi$ on $[0,R+1]$,
there is $r\in(0,1)$ such that
\[
        |\Phi(x)-\Phi(y)|<\delta
        \quad\text{whenever}\quad
        x,y\in[0,R+1],\quad |x-y|<r.
\]
If $I=(a,b)$ satisfies $|I|<r$, then either $a\ge R$, in which case
$\Phi(I)\subset(S,\infty)$, or $a<R$, in which case $b<R+1$ and then
$|\Phi(I)|<\delta$.  Thus \eqref{eq:Phi-target-VMO-thresholds} proves that
\[
 \lim_{r\downarrow0}
 \sup_{J\subset\Rp,|J|\le r}
 \omega_{\nu}(b;J)=0.
\]

Put $\kappa=2\alpha+2>1$.  For $a,\ell\ge0$, convexity of
$x\mapsto x^\kappa$ gives
\[
        \Phi(a+\ell)-\Phi(a)
        =\frac{(a+\ell)^\kappa-a^\kappa}{\kappa}
        \ge\frac{\ell^\kappa}{\kappa}
        =\Phi(\ell).
\]
Choose $L>1$ so that $\Phi(L)>A$.  Whenever $|I|\ge L$, the interval
$J=\Phi(I)$ satisfies $|J|\ge A$.  This proves that
\[
	 \lim_{L\to\infty}
 \sup_{J\subset\Rp,|J|\ge L}
 \omega_{\nu}(b;J)=0
 \]

Finally, if $I\subset(\Phi^{-1}(S),\infty)$, then
$\Phi(I)\subset(S,\infty)$.  
Since $\Phi^{-1}(S)\to\infty$ as $S\to\infty$, the choice
$R=\Phi^{-1}(S)$ gives
\[
	 \lim_{R\to\infty}
 \sup_{J\subset(R,\infty)}
 \omega_{\nu}(b;J)=0.
 \]
Hence $b\in\VMO_{\nu,\alpha}$.
\end{proof}

\begin{proposition}
Let $X$ be either $\Rp$ or $\mathbb R$. If $w\in A_2(X,dt)$, $h\in\BMO_w(X)$ and $J\subset X$ is an interval, define
$$
        \Omega_w(h;J):=\frac1{w(J)}\int_J |h-h_J|\,dt,
        \qquad
        \Omega_{2,w}(h;J)
        :=\left(\frac1{w(J)}\int_J |h-h_J|^2w^{-1}\,dt\right)^{1/2}.
$$
with $h_J:=\frac1{|J|}\int_J h(t)\,dt$. Then we can replace $\Omega_w$ by $\Omega_{2,w}$ in
each of the three VMO conditions.   
\end{proposition}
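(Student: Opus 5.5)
Two interval-wise inequalities will give the claim: for every interval $J\subset X$,
\[
 \Omega_w(h;J)\le \Omega_{2,w}(h;J)
\]
for all $h$ locally integrable, and
\[
 \Omega_{2,w}(h;J)\lesssim \sup_{J'\subset \widehat J}\Omega_w(h;J'),
\]
where the supremum runs over subintervals $J'$ of some interval $\widehat J$ comparable to $J$. In fact $\widehat J=J$ will do. With both bounds, each of the three VMO conditions for $\Omega_w$ implies the corresponding condition for $\Omega_{2,w}$, and conversely. The three conditions are: small length, large length, and $J\subset(R,\infty)$. Each is stable under passing to subintervals: subintervals of a short interval are short, and subintervals of intervals in $(R,\infty)$ stay in $(R,\infty)$. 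The large-length regime needs separate care, since subintervals of a long interval need not be long.

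The first inequality is Cauchy--Schwarz:
\[
\int_J|h-h_J|\,dt=\int_J|h-h_J|w^{-1/2}w^{1/2}\,dt\le \Bigl(\int_J|h-h_J|^2w^{-1}\Bigr)^{1/2}w(J)^{1/2}.
\]
Dividing by $w(J)$ gives $\Omega_w\le\Omega_{2,w}$. This handles the direction from $\Omega_{2,w}$-VMO to $\Omega_w$-VMO in all three regimes at once.

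For the reverse direction I would use a localized Bloom $p=2$ equivalence, the local version of Proposition~\ref{prop:Bloom-equivalent-forms} with $u=w$, $v=w^{-1}$, $\eta=w$, and $p=2$. The key step is the John--Nirenberg/Holmes--Lacey--Wick type bound
\[
\Omega_{2,w}(h;J)\lesssim \sup_{J'\subset J}\Omega_w(h;J'),
\]
with constants depending only on $[w]_{A_2}$. I would prove it by running the Holmes--Lacey--Wick argument restricted to $J$. A Calder\'on--Zygmund stopping time inside $J$ on $|h-h_J|/\langle w\rangle$ produces a sparse family of subintervals. This gives the pointwise bound $|h-h_J|\lesssim \sum_{Q\in\mathcal S,Q\subset J}\Omega_w(h;Q)\langle w\rangle_Q\one_Q$. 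The weighted $L^2(w^{-1})$ norm is then bounded by the sparse operator estimate, using $w\in A_2$ and $w^{-1}\in A_2$. The main obstacle is ensuring this localization uses only subintervals of $J$. The Holmes--Lacey--Wick proof is dyadic; the stopping cubes are dyadic children inside a dyadic cube containing $J$. I would fix this by using the adjacent-grids trick from the proof of Proposition~\ref{prop:Bloom-equivalent-forms} on $J$ itself, rescaled to $[0,1)$. The stopping construction runs in the standard dyadic grid of $J$, whose intervals are all subintervals of $J$. The final comparison of dyadic and continuous averages costs only doubling constants.

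Given the localized bound, the small-length and far-away conditions transfer immediately, because the supremum over $J'\subset J$ inherits the smallness. For the large-length condition, subintervals of $J$ may be short, so the bound must be refined. I would split the sparse sum into subintervals $Q$ with $|Q|\ge r$, where $\Omega_w(h;Q)<\varepsilon$, and short ones with $|Q|<r$. The short $Q$ are controlled by the small-length condition once $r$ is small; this is legitimate because $h\in\VMO$ for $\Omega_w$ gives all three conditions simultaneously. The maximal-oscillation input is thus at most $\varepsilon$ in either case, and the sparse bound gives $\Omega_{2,w}(h;J)\lesssim\varepsilon$ for every $J$ with $|J|\ge r$ large. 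I expect this interplay, where the large-interval regime uses the small-interval regime, to be the only delicate point. For $X=\Rp$ versus $\mathbb R$ nothing changes, since all intervals involved are subintervals of $J\subset X$.
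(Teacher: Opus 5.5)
Your Cauchy--Schwarz direction is correct. So is the localized estimate $\Omega_{2,w}(h;J)\lesssim_{[w]_{A_2}}\sup_{Q\in\mathcal D(J)}\Omega_w(h;Q)$. It follows from the standard sparse bound $|h-h_J|\one_J\lesssim\sum_{Q\in\mathcal S}\Omega_w(h;Q)\langle w\rangle_Q\one_Q$ with $\mathcal S\subset\mathcal D(J)$, together with $L^2(w^{-1})$-boundedness of the sparse operator applied to $w\one_J$. No adjacent grids are needed. That settles the small-length and far-away regimes.

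\textbf{The gap is in the large-length regime.} Your splitting parameter $r$ must be large and small at the same time:
\begin{itemize}
\item large, so that $|Q|\ge r$ forces $\Omega_w(h;Q)<\varepsilon$; this needs $r\ge A_\varepsilon$ from the large-interval condition;
\item small, so that $|Q|<r$ forces $\Omega_w(h;Q)<\varepsilon$; this needs $r\le r_\varepsilon$ from the small-interval condition.
\end{itemize}
Sparse subintervals with $r_\varepsilon\le|Q|<A_\varepsilon$ lying near the origin are controlled by none of the three conditions. For them you only know $\Omega_w(h;Q)\le\|h\|_{\BMO_w}$, and the sparse family of a long $J$ reaching the origin can contain such $Q$. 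So your claim that the oscillation input is at most $\varepsilon$ for every cube in $\mathcal S$ is false, and the argument as written does not give $\Omega_{2,w}(h;J)\to0$ as $|J|\to\infty$.

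The conclusion can be rescued with an ingredient your proposal lacks.
\begin{itemize}
\item Intermediate $Q\subset(R_\varepsilon,\infty)$ are handled by the far-away condition.
\item The remaining bad cubes all lie in the fixed bounded set $K=(0,R_\varepsilon+A_\varepsilon)$; on $\mathbb R$ take $K=(-R_\varepsilon-A_\varepsilon,R_\varepsilon+A_\varepsilon)$.
\item Applying the same sparse bound to $w\one_{K\cap J}$, their contribution to $\Omega_{2,w}(h;J)$ is $\lesssim\|h\|_{\BMO_w}(w(K\cap J)/w(J))^{1/2}$.
\item The $A_\infty$ property then gives $w(K\cap J)/w(J)\lesssim(|K|/|J|)^\theta\to0$.
\end{itemize}

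The paper avoids localization to subintervals altogether.
\begin{itemize}
\item By openness of $A_2$ it picks $q>2$ with $w,w^{1-q}\in A_q$.
\item It applies Proposition~\ref{prop:Bloom-equivalent-forms} to this pair, whose Bloom weight is $w$, to get $\sup_J(w(J)^{-1}\int_J|h-h_J|^qw^{1-q})^{1/q}\lesssim\|h\|_{\BMO_w}$.
\item H\"older interpolation between this bound and $\Omega_w$ gives the per-interval estimate $\Omega_{2,w}(h;J)\lesssim\|h\|_{\BMO_w}^{1-\gamma}\Omega_w(h;J)^\gamma$.
\end{itemize}
This transfers all three conditions at once, with no interplay between regimes.
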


\begin{proof}
We only give the proof when $X=\mathbb{R}$. When $X=\Rp$, the results follow by first
extending $w$ and $h$ evenly using \eqref{eq:even-Ap} and \eqref{eq:BMO-equivalence-even}, applying the results on $\mathbb R$, and then restricting back to $\Rp$.

It remains to prove that for every family
$\mathcal A$ of intervals in $\mathbb R$, there is a constant $\gamma\in(0,1)$ such that
\[
 \sup_{J\in\mathcal A}\Omega_w(h;J)
 \le
 \sup_{J\in\mathcal A}\Omega_{2,w}(h;J)
 \lesssim
 \|h\|_{\BMO_w(\mathbb R)}^{\,1-\gamma}
 \left(\sup_{J\in\mathcal A}\Omega_w(h;J)\right)^\gamma.
\]

We claim that for any $J\subset\mathbb R$,
\[
 \Omega_w(h;J)
 \le
\Omega_{2,w}(h;J)
 \lesssim
 \|h\|_{\BMO_w(\mathbb R)}^{\,1-\gamma}
 \Omega_w(h;J)^\gamma.
\]
Indeed, by the open property of Muckenhoupt classes, there is
$s\in(1,2)$ such that $w\in A_s(\mathbb R)$. Let $q=s'$, we have
$w^{1-q}\in A_q(X)$. Moreover, $w\in A_q(\mathbb R)$ since $q>2>s$.

Then applying Proposition~\ref{prop:Bloom-equivalent-forms} in $w$ and $w^{1-q}$ with the associated Bloom weight $(w/w^{1-q})^{1/q}=w$. Therefore, 
\begin{equation}\label{eq:Omega-q-global}
 \sup_{J\subset X}
 \left(
        \frac1{w(J)}
        \int_J|h-h_J|^q w^{1-q}\,dt
 \right)^{1/q}
 \lesssim \|h\|_{\BMO_w(\mathbb R)}.
\end{equation}
H\"older's inequality 
yields
\begin{align*}
 \Omega_{2,w}(h;J)^2
 &=\frac1{w(J)}\int_J |h-h_J|^2w^{-1}\,dt\\
 &\le
 \left(\frac1{w(J)}\int_J |h-h_J|\,dt\right)^{\frac{q-2}{q-1}}
 \left(\frac1{w(J)}\int_J |h-h_J|^q w^{1-q}\,dt\right)^{\frac{1}{q-1}}\\
 &\le
 \Omega_w(h;J)^{\frac{q-2}{q-1}}\,\|h\|_{\BMO_w(\mathbb R)}^{\frac{q}{q-1}}.
\end{align*}
Let $\gamma=\frac{q-2}{2(q-1)}=1-\frac s2\in (0,1)$, we obtain
\[
\Omega_w(h;J)
 \le
 \Omega_{2,w}(h;J)
 \lesssim
 \|h\|_{\BMO_w(\mathbb R)}^{\,1-\gamma}
 \Omega_w(h;J)^\gamma.
 \]
The first inequality is just Cauchy--Schwarz. Then the claim is proved. 

Consequently, for every family
$\mathcal A$ of intervals in $X$,
\[
 \sup_{J\in\mathcal A}\Omega_w(h;J)
 \le
 \sup_{J\in\mathcal A}\Omega_{2,w}(h;J)
 \lesssim
 \|h\|_{\BMO_w(X)}^{\,1-\gamma}
 \left(\sup_{J\in\mathcal A}\Omega_w(h;J)\right)^\gamma.
\]
Thus, for a fixed $h\in\BMO_w(X)$, a family of interval suprema tends to zero
with $\Omega_w$ if and only if it tends to zero with $\Omega_{2,w}$.  In
particular, the three vanishing conditions may equivalently be formulated using $\Omega_{2,w}$.
\end{proof}

\begin{lemma}\label{lem:halfline-reflection-VMO}
Let $w\in A_2(\Rp,dt)$, and let $h$ be integrable on every bounded interval
in $\Rp$.  Define the even extensions
$
        w_{\rm e}(t)=w(|t|),\ h_{\rm e}(t)=h(|t|),\ t\in\mathbb R.
$
Then 
$h\in\VMO_w(\Rp)$ if and only if $h_{\rm e}\in\VMO_{w_{\rm e}}(\mathbb R)$, where the real-line VMO space is defined by three similar conditions.
\end{lemma}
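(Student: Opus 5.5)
The plan is to compare the oscillations interval by interval, exactly as in the proof of \eqref{eq:BMO-equivalence-even}, and then to check that the comparison respects each of the three VMO regimes. Write $\Omega_w(h;J)=w(J)^{-1}\int_J|h-h_J|\,dt$ on $\Rp$ and $\Omega_{w_{\rm e}}(h_{\rm e};J)$ on $\mathbb R$. By Lemma~\ref{lem:halfline-reflection-BMO}, $w_{\rm e}\in A_2(\mathbb R)$, and $h\in\BMO_w(\Rp)$ if and only if $h_{\rm e}\in\BMO_{w_{\rm e}}(\mathbb R)$. So we may assume both functions lie in the respective BMO spaces, and only the vanishing conditions need to be compared.

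\emph{Direction $h_{\rm e}\in\VMO_{w_{\rm e}}(\mathbb R)\Rightarrow h\in\VMO_w(\Rp)$.} This is immediate. Every interval $J\subset\Rp$ is an interval of $\mathbb R$, and on it $\Omega_w(h;J)=\Omega_{w_{\rm e}}(h_{\rm e};J)$. Moreover, the conditions $|J|\le r$, $|J|\ge r$ and $J\subset(R,\infty)$ are the same whether $J$ is regarded in $\Rp$ or in $\mathbb R$.

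\emph{Direction $h\in\VMO_w(\Rp)\Rightarrow h_{\rm e}\in\VMO_{w_{\rm e}}(\mathbb R)$.} Let $J\subset\mathbb R$ be an interval. There are three cases.

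\begin{enumerate}[(i)]
\item If $J\subset(0,\infty)$, then $\Omega_{w_{\rm e}}(h_{\rm e};J)=\Omega_w(h;J)$.
\item If $J\subset(-\infty,0)$, reflection gives $\Omega_{w_{\rm e}}(h_{\rm e};J)=\Omega_w(h;-J)$, where $-J$ has the same length as $J$ and lies at the same distance from the origin.
\item If $J=(-a,b)$ with $a,b>0$, put $R=\max\{a,b\}$ and $L=(0,R)$. The argument preceding \eqref{eq:BMO-equivalence-even} gives
\[
\Omega_{w_{\rm e}}(h_{\rm e};J)\le 8\,\Omega_w(h;L).
\]
Here $|L|=R$ satisfies $|J|/2\le R\le|J|$.
\end{enumerate}

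Now take the three regimes in turn.

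\begin{itemize}
\item \emph{Small intervals.} If $|J|\le r$, every case reduces to a half-line interval of length at most $r$. So $\sup_{|J|\le r}\Omega_{w_{\rm e}}\le 8\sup_{|I|\le r}\Omega_w\to0$.
\item \emph{Large intervals.} If $|J|\ge r$, every case reduces to a half-line interval of length at least $r/2$. So this regime also tends to $0$.
\item \emph{Intervals escaping to infinity.} In the real-line definition, "escaping to infinity" should mean $J\cap(-R,R)=\emptyset$, i.e.\ $J\subset(R,\infty)$ or $J\subset(-\infty,-R)$; this is the symmetric reading of "three similar conditions". Such an interval never crosses the origin, so only cases (i) and (ii) occur. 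Both reduce to half-line intervals contained in $(R,\infty)$, and the third condition transfers.
\end{itemize}

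The main obstacle is fixing the precise meaning of the third condition on $\mathbb R$. If it were stated only as $J\subset(R,\infty)$, the transfer would be even easier, since case (ii) would not arise. Under either reading, crossing intervals never appear in the far-away regime, and the constant $8$ is uniform, so the equivalence holds.
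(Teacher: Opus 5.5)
Your proof is correct and follows the paper's argument. Like the paper, you reduce to the vanishing conditions via Lemma~\ref{lem:halfline-reflection-BMO}. You control a crossing interval $J=(-a,b)$ by $L=(0,\max\{a,b\})$ with $|L|\le|J|\le 2|L|$; your constant $8$ is valid, and the argument actually gives $4$. You reflect intervals avoiding $[-R,R]$ into $(R,\infty)$, and you obtain the converse by restriction.
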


\begin{proof}
It remains to check the vanishing conditions. Suppose first that
$h\in\VMO_w(\Rp)$. The calculation leading to \eqref{eq:BMO-equivalence-even}, with $\eta$ replaced by $w$, gives that for every $J\subset\mathbb{R}$, there is an interval $L\subset\Rp$ such that $|L|\le|J|\le 2|L|$ and
\[
	\frac{1}{w_{\rm e}(J)}\int_J | h_{\rm e}-( h_{\rm e})_J| dt
	\lesssim
	\frac{1}{w(L)}\int_L | h- h_L|dt.
\]
That is, $\Omega_{w_{\rm e}}(h_{\rm e};J)\lesssim \Omega_w(h;L)$. Thus, we obtain that for every $\delta>0$,
\[
 \sup_{J\subset\mathbb R,\ |J|\le\delta}
 \Omega_{w_{\rm e}}(h_{\rm e};J)
 \lesssim
 \sup_{L\subset\Rp,\ |L|\le\delta}
 \Omega_w(h;L).
\]
Similarly, for every $A>0$,
\[
 \sup_{J\subset\mathbb R,\ |J|\ge A}
 \Omega_{w_{\rm e}}(h_{\rm e};J)
 \lesssim
 \sup_{L\subset\Rp,\ |L|\ge A/2}
 \Omega_w(h;L),
\]
These inequalities give two vanishing conditions for $h_{\rm e}$: 
{ 
\[
 \lim_{\delta\downarrow0}
 \sup_{J\subset\mathbb R,|J|\le\delta}
 \Omega_{w_{\rm e}}(h_{\rm e};J)=0,
 \qquad
 \lim_{A\to\infty}
 \sup_{J\subset\mathbb R,\ |J|\ge A}
 \Omega_{w_{\rm e}}(h_{\rm e};J)=0.
\]}

If $J\subset\mathbb R\setminus[-R,R]$, then $J$ is contained in one open half-line and thus either $J$ itself or its reflection $-J$ is an
interval $L\subset (R,\infty)$. Hence the identity
$\Omega_{w_{\rm e}}(h_{\rm e};J)=\Omega_w(h;L)$ follows from the evenness and gives
\[
 \sup_{J\subset\mathbb R\setminus[-R,R]} \Omega_{w_{\rm e}}(h_{\rm e};J)
 \le
 \sup_{L\subset(R,\infty)}\Omega_w(h;L),
\]
Thus
\[
\lim_{R\to\infty}
\sup_{J\subset\mathbb R\setminus[-R,R]} \Omega_{w_{\rm e}}(h_{\rm e};J)=0.
 \]
Combining the three conditions yields that 
$h_{\rm e} \in \VMO_{w_{\rm e}} (\mathbb{R})$.

Conversely, $h_{\rm e} \in \VMO_{w_{\rm e}} (\mathbb{R})$ directly implies $h\in\VMO_w(\Rp)$, since every interval $J\subset\Rp$ is also an interval in $\mathbb R$, and
$
        \Omega_w(h;J)=\Omega_{w_{\rm e}}(h_{\rm e};J).
$
Thus
{ {$h\in\VMO_w(\Rp)$ if and only if
$h_{\rm e}\in\VMO_{w_{\rm e}}(\mathbb R)$}}.
\end{proof}

{ 
\begin{lemma}\label{lem:real-line-VMO-density}
Let $w\in A_2(\mathbb R)$.  Then
\begin{equation}\label{eq:real-line-VMO-density}
        \VMO_w(\mathbb R)
        =
        \overline{C_c^\infty(\mathbb R)}^{\,\BMO_w(\mathbb R)}.
\end{equation}
\end{lemma}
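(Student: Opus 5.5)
We prove both inclusions. Throughout we use the $\Omega_{2,w}$ characterization of the three vanishing conditions from the preceding proposition whenever a quadratic quantity is more convenient.

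\emph{Step 1: $C_c^\infty(\mathbb R)\subset \VMO_w(\mathbb R)$.} Let $\varphi\in C_c^\infty(\mathbb R)$ with $\operatorname{supp}\varphi\subset(-R_0,R_0)$.

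For small intervals we use the mean value theorem: $\int_J|\varphi-\varphi_J|\le \|\varphi'\|_\infty|J|^2$. Since $w\in A_2$, we have $w(J)\ge |J|^2/w^{-1}(J)$. Hence
\[
\Omega_w(\varphi;J)\le \|\varphi'\|_\infty\, w^{-1}(J),
\]
and $w^{-1}$ is locally integrable.
\begin{itemize}
\item If $J$ meets $[-R_0,R_0]$ and $|J|\le\delta\le1$, then $J\subset[-R_0-1,R_0+1]$. Absolute continuity of $w^{-1}\,dt$ on this compact set gives $\sup_{|J|\le\delta} w^{-1}(J)\to0$.
\item If $J$ does not meet $[-R_0,R_0]$, then $\varphi\equiv0$ on $J$ and $\Omega_w(\varphi;J)=0$.
\end{itemize}
This gives the small-interval condition. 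The far-away condition also follows from the second case, since every $J\subset\mathbb R\setminus[-R,R]$ with $R\ge R_0$ avoids the support.

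For large intervals, $\int_J|\varphi-\varphi_J|\le 2\|\varphi\|_{L^1}$, so $\Omega_w(\varphi;J)\le 2\|\varphi\|_1/w(J)$. If $J$ meets the support and $|J|\ge A$, then $J\supset I_0:=[-R_0,R_0]$ or has comparable position. Lemma~\ref{lem:Ap-fixed-portion} (on $\mathbb R$) gives
\[
w(J)\gtrsim (|J|/|\tilde J|)^2 w(\tilde J),
\]
where $\tilde J$ is the smallest interval containing $J\cup I_0$. Combined with $w(\tilde J)\ge w(I_0)(|\tilde J|/|I_0|)^{\,?}$, this lower bound needs $w(\mathbb R)=\infty$. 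That holds for $A_2$ weights, since $w^{-1}(J)w(J)\ge|J|^2$ and $w^{-1}(J)\lesssim |J|^{2}/w(J)$ alone does not suffice. Instead we use the reverse doubling of $A_\infty$ weights: $w(2^kI_0)\ge(1+c)^k w(I_0)\to\infty$. Hence $w(J)\to\infty$ uniformly as $|J|\to\infty$ for $J$ meeting $I_0$. Intervals missing $I_0$ contribute zero. Finally, the closure of $C_c^\infty(\mathbb R)$ lies in $\VMO_w(\mathbb R)$, since $\VMO_w$ is closed in $\BMO_w$: each supremum is $1$-Lipschitz in the seminorm.

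\emph{Step 2: density, $\VMO_w\subset\overline{C_c^\infty}$.} This is the main obstacle. We follow the Uchiyama/Lacey--Li scheme for the weighted setting.

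Given $h\in\VMO_w$ and $\varepsilon>0$, fix $\delta$, $A$, $R$ from the three conditions. We proceed in three stages.
\begin{enumerate}
\item \emph{Truncate large values.} Replace $h$ by a bounded truncation $h^N=\max(-N,\min(h,N))$. Truncation does not increase oscillation: $\Omega_w(h^N;J)\le 2\Omega_w(h;J)$. So $h^N\in\VMO_w$ with uniform constants.
\item \emph{Flatten near infinity.} Use the far and large conditions to replace $h$, outside a large interval $[-R',R']$, by a constant built from dyadic averages at scale $A$. Using $\Omega_w(h;J)<\varepsilon$ for $|J|\ge A$ and for $J$ far away, the resulting function $h_1$ coincides with $h$ on $[-R',R']$, is constant near $\pm\infty$, and satisfies $\|h-h_1\|_{\BMO_w}\lesssim\varepsilon$. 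The error intervals straddling $\pm R'$ are controlled by chaining averages over adjacent intervals; the weighted doubling of $w$ makes the telescoping sums summable. Subtracting the constant leaves a compactly supported bounded function.
\item \emph{Mollify.} Set $h_2=h_1*\phi_t$ with $t\ll\delta$. For $|J|\gtrsim t$, write
\[
\Omega_w(h_1-h_2;J)\lesssim \sup_{|y|\le t}\Omega_w\bigl(h_1-h_1(\cdot-y);J\bigr).
\]
Compare $w$ on $J$ and $J-y$ by doubling, and split $J$ into pieces of length about $t$ to use the small-interval condition. For $|J|<t$, control directly by the small-scale condition applied on intervals of length about $t$ containing $J$, using Lemma~\ref{lem:Ap-fixed-portion}.
\end{enumerate}

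The delicate point is that translation does not preserve $w$. We will handle it via $A_2$ doubling, so that $w(J-y)\simeq w(J)$ when $|y|\le|J|$, and $h_2$ is then compactly supported and smooth. If this direct estimate fails, we fall back on duality: identify $\BMO_w$ with the dual of a weighted $H^1$ and argue via Hahn--Banach, as in \cite{LL22}.
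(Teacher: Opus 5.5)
\textbf{Verdict: genuine gap.} The paper does not prove this lemma. It quotes it as the one-dimensional case of the weighted Uchiyama theorem \cite[Theorem~4.1]{LL22}, noting that balls are intervals when $n=1$. Your Step~1 is essentially correct, namely $C_c^\infty(\mathbb R)\subset\VMO_w(\mathbb R)$ together with the closedness of $\VMO_w$. The bound $\Omega_w(\varphi;J)\le\|\varphi'\|_\infty\,w^{-1}(J)$ handles small intervals cleanly. For large intervals meeting the support, reverse doubling gives $w(J)\gtrsim w([-|J|,|J|])\to\infty$. The written large-interval paragraph is garbled and should be cleaned up. The gap is in Step~2, which is the entire content of the lemma, and specifically in Stage~2 (flattening).

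\textbf{Where Stage 2 fails.} Stage~2 is asserted rather than proved, and the mechanism you indicate does not close. Suppose $h_1=h$ on $[-R',R']$ and $h_1=c_+$ on $(R',\infty)$, with $c_+$ an average of $h$ at scale $A$. Take $J=(R'-t,R'+t)$. Then $\Omega_w(h-h_1;J)$ is governed by $t\,|h_{(R',R'+t)}-c_+|/w(J)$, since half of $J$ carries $0$ and the other half carries $h-c_+$. Telescoping through $(R',R'+2^jt)$ up to scale $A$, with $\Omega_w(h;\cdot)<\varepsilon$ at each step, only yields
\[
        \frac{t\,|h_{(R',R'+t)}-c_+|}{w(J)}
        \lesssim
        \varepsilon\sum_{0\le j\lesssim\log_2(A/t)}2^{-j}\,\frac{w(2^jJ)}{w(J)} .
\]
For $w\equiv1$ this bound is already of order $\varepsilon\log(A/t)$. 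The $A_2$ doubling bound $w(2^jJ)\le[w]_{A_2}4^jw(J)$ gives only $\varepsilon A/t$. So doubling does not make these sums summable, and your sketch does not control the jump that a hard replacement by a constant creates at $\pm R'$. Truncating to $|h|\le N$ does not help, because the jump need not be small. Closing this requires spreading the transition over many scales, which is exactly what Uchiyama's construction does and what its weighted adaptation in \cite{LL22} supplies.

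\textbf{Further omissions.}
\begin{itemize}
\item You need a single constant at both $+\infty$ and $-\infty$ before ``subtracting the constant'' produces a compactly supported function. Two constants $c_\pm$ must be reconciled.
\item You must show $h^N\to h$ in $\BMO_w$. This is true for $h\in\VMO_w$, because $t\mapsto t-\max(-N,\min(t,N))$ is $1$-Lipschitz, which handles small, large and far intervals. On the remaining compact family of intervals, use $L^1_{\mathrm{loc}}$ convergence together with $w(J)\ge|J|^2/w^{-1}(J)$. As written this step is missing.
\end{itemize}

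\textbf{The duality fallback.} This is not a viable argument. The weighted $H^1$ space is the predual of $\BMO_w$, not its dual, so a Hahn--Banach density argument would need information about $(\BMO_w)^*$. Duality results of this type identify the dual of the closure of $C_c^\infty$; they do not characterise that closure by the three vanishing conditions, which is what the lemma asserts. Either complete Stage~2 with a genuine multi-scale construction, or cite \cite[Theorem~4.1]{LL22} directly as the paper does.
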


\begin{proof}
This is the one-dimensional weighted VMO density theorem
\cite[Theorem~4.1]{LL22}.  In one dimension, the balls used there are
intervals, so the definitions of $\BMO_w(\mathbb R)$ and
$\VMO_w(\mathbb R)$ agree with the definitions used here.
\end{proof}
}

\begin{lemma}\label{lem:half-line-VMO-density}
Let $w\in A_2(\Rp,dt)$. Then
\begin{equation}\label{eq:halfline-density}
        \VMO_w(\Rp)
        =\overline{C_c^1([0,\infty))|_{\Rp}}^{\,\BMO_w(\Rp)}.
\end{equation}
\end{lemma}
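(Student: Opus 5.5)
We prove the two inclusions separately. Throughout we use the reflection Lemmas~\ref{lem:halfline-reflection-BMO} and~\ref{lem:halfline-reflection-VMO} together with the real-line density Lemma~\ref{lem:real-line-VMO-density}.

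\emph{The inclusion $\supseteq$.} First we check that every $g\in C_c^1([0,\infty))$ lies in $\VMO_w(\Rp)$; the inclusion then follows once we know that $\VMO_w(\Rp)$ is closed in $\BMO_w(\Rp)$.

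For $g$ itself we use $\Omega_w(g;J)\le \|g'\|_\infty |J|^2/w(J)$. If also $J\subset(0,N)$ with $\operatorname{supp}g\subset[0,N)$, then $\Omega_w(g;J)\le 2\|g\|_\infty|J|/w(J)$. The three vanishing conditions reduce to:
\begin{itemize}
\item[(i)] for small intervals, $|J|^2/w(J)\to0$ uniformly for $J$ near the support;
\item[(ii)] for large intervals, $|J|/w(J)\to0$, or more precisely $\int_J|g-g_J|\,dt\lesssim \min(|J|,N)\|g\|_\infty$ compared with $w(J)$;
\item[(iii)] far-away intervals, where $g$ vanishes beyond $N$, so $\Omega_w(g;J)=0$.
\end{itemize}
For (i) we apply Lemma~\ref{lem:Ap-fixed-portion} with $q=2$: it gives $w(J)\gtrsim w(I_0)(|J|/|I_0|)^2$ for $J\subset I_0=(0,2N)$, so $|J|^2/w(J)\lesssim |I_0|^2/w(I_0)$. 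This only gives boundedness, so we sharpen it with the open property, $w\in A_s$ for some $s<2$, which yields $|J|^2/w(J)\lesssim |J|^{2-s}\to0$.

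For (ii), if $J$ is large and meets $[0,N]$, we use $\int_J|g-g_J|\le 2N\|g\|_\infty$. Doubling and fixed-portion estimates then give $w(J)\gtrsim w(I_0)(|J|/|I_0|)^{1/?}$, and in fact $w(J)\to\infty$ by the reverse fixed-portion estimate applied to $I_0\subset J$: $w(J)\ge w(I_0)(|J|/|I_0|)^{\epsilon}$, since $A_2$ weights satisfy $w(J)/w(I)\ge c(|J|/|I|)^{\delta}$ for some $\delta>0$.

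Closedness of $\VMO_w$ in the $\BMO_w$ norm is immediate, since each $\Omega_w(\cdot;J)$ is subadditive and bounded by the norm.

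\emph{The inclusion $\subseteq$.} Let $h\in\VMO_w(\Rp)$. By Lemma~\ref{lem:halfline-reflection-VMO}, $h_{\rm e}\in\VMO_{w_{\rm e}}(\mathbb R)$, and $w_{\rm e}\in A_2(\mathbb R)$ by Lemma~\ref{lem:halfline-reflection-BMO}. By Lemma~\ref{lem:real-line-VMO-density}, there are $\varphi_k\in C_c^\infty(\mathbb R)$ with $\|h_{\rm e}-\varphi_k\|_{\BMO_{w_{\rm e}}(\mathbb R)}\to0$.

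Symmetrize by setting $\psi_k(t)=\tfrac12(\varphi_k(t)+\varphi_k(-t))$. These are even, and by the reflection invariance of the $\BMO_{w_{\rm e}}$ norm (both $w_{\rm e}$ and $h_{\rm e}$ are even) we still have $\|h_{\rm e}-\psi_k\|\to0$. Restricting to $\Rp$ is norm-nonincreasing, so $\psi_k|_{\Rp}\to h$ in $\BMO_w(\Rp)$. Finally, $\psi_k|_{[0,\infty)}\in C_c^1([0,\infty))$; its smoothness up to $0$ is exactly why the class $C_c^1([0,\infty))$ is used here rather than $C_c^\infty(\Rp)$.

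\emph{Main obstacle.} The delicate step is the large-interval condition for compactly supported $g$. It needs $w(J)\to\infty$ faster than the bounded oscillation mass $N\|g\|_\infty$. We plan to obtain this from the reverse-doubling property of $A_2$ weights, namely that $w(2J)\ge(1+c)w(J)$, which can be derived from Lemma~\ref{lem:Ap-fixed-portion} applied with $E=J\subset 2J$ and its complement. The small-interval condition additionally requires the openness $A_2\subset\bigcup_{s<2}A_s$; if that turns out to be awkward, we would instead lean entirely on the even-extension argument, where the smooth approximants come directly from \cite{LL22}.
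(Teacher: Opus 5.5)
Your proposal is correct and follows essentially the paper's proof. For one inclusion you verify the three vanishing conditions directly for $C_c^1([0,\infty))$ functions and use closedness of $\VMO_w(\Rp)$; for the other you use even extension, the Lacey--Li density theorem, symmetrization and restriction. Two of your steps are only cosmetic variants of the paper's: openness $w\in A_s$, $s<2$, combined with the fixed-portion estimate (the paper uses a reverse H\"older bound for $w^{-1}$), and the $A_\infty$ estimate on a fixed subinterval (the paper uses doubling plus $w(\Rp)=\infty$).

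One small fix is needed in the large-interval step: the fixed interval must actually lie inside $J$, so take $(N,2N)$ rather than $I_0=(0,2N)$. Every $J$ meeting $[0,N]$ with $|J|\ge 2N$ contains $(N,2N)$.
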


\begin{proof}
We write $w(E)=\int_Ew(t)\,dt$.  We use three elementary consequences of
$w\in A_2(\Rp,dt)$.

First, $w\in A_\infty$, so there are constants $C>0$ and $\theta>0$ such that
\begin{equation}\label{eq:Ainfty-half-line}
        \frac{w(E)}{w(J)}
        \le C\left(\frac{|E|}{|J|}\right)^\theta
\end{equation}
for every interval $J\subset\Rp$ and every measurable set $E\subset J$.  Choose
an integer $m$ so large that $C2^{-m\theta}<1/2$.  Applying
\eqref{eq:Ainfty-half-line} to $E=(0,R)$ and $J=(0,2^mR)$ gives
$w((0,2^mR))\ge2w((0,R))$.  Iteration gives
\begin{equation}\label{eq:w-infinite-half-line}
        w(\Rp)=\infty.
\end{equation}
Second, $w\,dt$ is doubling on intervals.  Third, for every $L>0$ there are
constants $c_L>0$ and $\gamma_L<2$ such that
\begin{equation}\label{eq:local-lower-w}
        w(J)\ge c_L |J|^{\gamma_L}
\end{equation}
whenever $J\subset(0,L)$.  To prove this, use that
$w^{-1}\in A_2$ and hence satisfies a reverse H\"older inequality on $(0,L)$.
Thus $w^{-1}\in L^s(0,L)$ for some $s>1$.  By Cauchy's inequality and H\"older's
inequality,
$$
        |J|^2
        \le w(J)\int_J w^{-1}\,dt
        \le w(J)|J|^{1-1/s}
             \left(\int_0^L w(t)^{-s}\,dt\right)^{1/s}.
$$
This is \eqref{eq:local-lower-w} with $\gamma_L=1+1/s<2$.

We prove first that $C_c^1([0,\infty))|_{\Rp}\subset\VMO_w(\Rp)$.  Let
$\varphi\in C_c^1([0,\infty))$ and choose $M>1$ so that
$\operatorname{supp}\varphi\subset[0,M]$. 
Assume $r<M$. If $J$ does not meet the support, the oscillation is zero. Otherwise, if $|J|<r$ and meets the
support, then $J\subset(0,2M)$.  Since
$$
        \int_J |\varphi-\varphi_J|\,dt
        \le 2\|\varphi'\|_\infty |J|^2,
$$
\eqref{eq:local-lower-w} gives
$$
        \lim_{r\to 0}\sup_{|J|<r}
        \Omega_w(\varphi;J)
        \lesssim \lim_{r\to 0}r^{2-\gamma_{2M}}=0.
$$

Assume $A>2M$. If $|J|>A$ and $J$ meets $[0,M]$, then $2J$ crosses the origin and
\[
w([0,A))\lesssim w(J)
\]
since $w\,dt$ is doubling. Thus $w(J)\to\infty$ uniformly in such intervals as $A\to\infty$, which follows from \eqref{eq:w-infinite-half-line}.  
On the other
hand,
$$
        \int_J |\varphi-\varphi_J|\,dt
        \le 2\|\varphi\|_{L^1(0,\infty)}.
$$
Hence 
$
        \lim_{A\to \infty}\sup_{|J|>A}
        \Omega_w(\varphi;J)
        =0.
$

Finally, intervals contained in
$(R,\infty)$ are disjoint from the support when $R>M$, leading to
$
        \lim_{R\to \infty}\sup_{J\subset(R,\infty)}
        \Omega_w(\varphi;J)
        =0.
$
This proves
$C_c^1([0,\infty))|_{\Rp}\subset\VMO_w(\Rp)$.

The space $\VMO_w(\Rp)$ is closed in the $\BMO_w(\Rp)$ seminorm.  For
any functions $f$ and $g$ and every interval $J$,
$$
        \Omega_w(f;J)
        \le \Omega_w(f-g;J)+\Omega_w(g;J)
        \le \|f-g\|_{\BMO_w(\Rp)}+\Omega_w(g;J).
$$
Therefore the closure on the right side of \eqref{eq:halfline-density} is
contained in $\VMO_w(\Rp)$.

It remains to prove the reverse inclusion.  Let $h\in\VMO_w(\Rp)$ and extend
$h$ and $w$ evenly to
$$
        H(t)=h(|t|),\qquad w_{\rm e}(t)=w(|t|),\qquad t\in\mathbb R.
$$
By Lemma~\ref{lem:halfline-reflection-BMO} and Lemma~\ref{lem:halfline-reflection-VMO}, $w_{\rm e}\in A_2(\mathbb R)$,
$H\in\VMO_{w_{\rm e}}(\mathbb R)$, and the BMO seminorms of even functions are equivalent.

{ Apply Lemma~\ref{lem:real-line-VMO-density} to $H$.  There are functions
$H_N\in C_c^\infty(\mathbb R)$ such that
\[
        \|H-H_N\|_{\BMO_{w_{\rm e}}(\mathbb R)}\to0.
\]}

The functions $H_N$ may not be even.  Replace them by their even parts
$$
        E_N(t)=\frac{H_N(t)+H_N(-t)}2.
$$
Then $E_N\in C_c^2(\mathbb R)$ is even, and the triangle inequality together
with the evenness of $H$ gives
$$
        \|H-E_N\|_{\BMO_{w_{\rm e}}(\mathbb R)}
        \le
        \frac12\|H-H_N\|_{\BMO_{w_{\rm e}}(\mathbb R)}
        +\frac12\|H-H_N(-\cdot)\|_{\BMO_{w_{\rm e}}(\mathbb R)}
        =
        \|H-H_N\|_{\BMO_{w_{\rm e}}(\mathbb R)}\to0.
$$
Let $g_N=E_N|_{\Rp}$.  Then $g_N\in C_c^1([0,\infty))|_{\Rp}$, and
\eqref{eq:reflect-BMO-equivalence} gives
$$
        \|h-g_N\|_{\BMO_w(\Rp)}\to0.
$$
This proves the reverse inclusion and completes the proof.
\end{proof}

{ 
The one-dimensional nature of the $\Phi$-coordinate after the change of
variables is the key point in the density statement below.  We include the
proof because the endpoint $0$ on the half-line must be treated carefully.

For
$$
        \Phi(x)=\rho_\alpha((0,x))=\frac{x^{2\alpha+2}}{2\alpha+2},
$$
we use the following smooth compact class:
\begin{equation}\label{eq:Crhoc-def}
        \Crhoc
        :=\bigl\{\varphi\circ\Phi:
              \varphi\in C_c^1([0,\infty))\bigr\}.
\end{equation}
Here $C_c^1([0,\infty))$ means that $\varphi$ is $C^1$ on the closed
half-line and has compact support in $[0,\infty)$.  Thus functions in
$\Crhoc$ may be non-zero at the endpoint $x=0$.  This is a convenient way to
state the half-line analogue of the classical density theorem after the
change of variables $t=\Phi(x)$.  It also removes a possible ambiguity in the
notation $C_c^1(\Rp)$, since $\Rp$ is the open half-line in the rest of the
paper.

We do not use all compactly supported $C^1$ functions of the Euclidean
variable $x$ as the dense smooth class.  That class is not intrinsic to
$d\rho_\alpha$ in the weighted statement.  To see that, put
$\sigma=(2\alpha+2)^{-1}$ and choose $\nu$ to agree with
$\Phi(x)^\sigma$ near $0$ and to be smoothly changed to a positive constant
away from $0$.  Since $0<\sigma<1$, this is an $A_2(\rho_\alpha)$ weight
in the $\Phi$-coordinate.  If
$a(x)=x\chi(x)$ with $\chi=1$ near $0$, then the oscillation over
$I_r=(0,r)$ satisfies
$$
        \int_{I_r}|a-a_{I_r}|\,d\rho_\alpha\simeq r^{2\alpha+3},
        \qquad
        \int_{I_r}\nu\,d\rho_\alpha\simeq r^{2\alpha+3}.
$$
Thus this ordinary Euclidean $C^1$ symbol is not in
$\VMO_{\nu,\alpha}$.  The density statement below is therefore formulated in
the natural $\rho_\alpha$-coordinate smooth class.

The class \eqref{eq:Crhoc-def} is also the right class for the compactness
argument, because its elements are Lipschitz in the $\rho_\alpha$-coordinate.
If $a=\varphi\circ\Phi\in\Crhoc$, then
\begin{equation}\label{eq:rho-Lipschitz}
        |a(x)-a(y)|
        \le \|\varphi'\|_\infty |\Phi(x)-\Phi(y)|
        \le \|\varphi'\|_\infty\rho_\alpha(B(x,|x-y|)),
        \qquad x,y>0.
\end{equation}
}

\begin{proposition}\label{prop:density-VMO}
Let $\nu\in A_2(\rho_\alpha)$.  Then
\begin{equation}\label{eq:density-VMO}
        \VMO_{\nu,\alpha}
        =\overline{\Crhoc}^{\,\BMO_{\nu,\alpha}}.
\end{equation}
\end{proposition}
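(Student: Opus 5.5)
The plan is to transfer the statement to the $\Phi$-coordinate and then invoke the half-line density result, Lemma~\ref{lem:half-line-VMO-density}. Put $\widetilde\nu=\nu\circ\Phi^{-1}$. We first check that $\widetilde\nu\in A_2(\Rp,dt)$ with $[\widetilde\nu]_{A_2(\Rp,dt)}=[\nu]_{A_2(\rho_\alpha)}$. This holds because $\Phi$ maps intervals bijectively onto intervals, satisfies $|\Phi(I)|=\rho_\alpha(I)$, and pushes $d\rho_\alpha$ forward to $dt$. These are exactly the identities already used in the proof of Proposition~\ref{prop:Bloom-equivalent-forms}, applied with $p=2$.

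Next we record the two transfer facts. By Lemma~\ref{lem:Phi-BMO-transfer}, for every locally integrable $b$,
\[
\|b\|_{\BMO_{\nu,\alpha}}=\|b\circ\Phi^{-1}\|_{\BMO_{\widetilde\nu}(\Rp)} .
\]
Since $b\mapsto b\circ\Phi^{-1}$ is linear, it follows that $\|b-a\|_{\BMO_{\nu,\alpha}}=\|\widetilde b-\widetilde a\|_{\BMO_{\widetilde\nu}(\Rp)}$ for any two symbols $a,b$. Hence $b\mapsto\widetilde b$ is a bijective isometry of seminormed spaces from $\BMO_{\nu,\alpha}$ onto $\BMO_{\widetilde\nu}(\Rp)$, with inverse $\widetilde b\mapsto\widetilde b\circ\Phi$. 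Local integrability is preserved in both directions because $d\rho_\alpha$ corresponds to $dt$, including on intervals $(0,r)$. By Lemma~\ref{lem:Phi-VMO-transfer}, this isometry maps $\VMO_{\nu,\alpha}$ onto $\VMO_{\widetilde\nu}(\Rp)$. Finally, by the definition \eqref{eq:Crhoc-def}, the map sends $\Crhoc$ onto $C_c^1([0,\infty))|_{\Rp}$, since $(\varphi\circ\Phi)\circ\Phi^{-1}=\varphi$.

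Closures are preserved by a surjective isometry of seminormed spaces. Therefore applying Lemma~\ref{lem:half-line-VMO-density} to $\widetilde\nu$ gives
\[
\VMO_{\widetilde\nu}(\Rp)=\overline{C_c^1([0,\infty))|_{\Rp}}^{\,\BMO_{\widetilde\nu}(\Rp)},
\]
and pulling back by $\Phi$ yields \eqref{eq:density-VMO}. To make this concrete, take $b\in\VMO_{\nu,\alpha}$. There are $\varphi_N\in C_c^1([0,\infty))$ with $\|\widetilde b-\varphi_N\|_{\BMO_{\widetilde\nu}}\to0$, and then $\|b-\varphi_N\circ\Phi\|_{\BMO_{\nu,\alpha}}\to0$. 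For the converse, a $\BMO_{\nu,\alpha}$-limit of elements of $\Crhoc$ transfers to a $\BMO_{\widetilde\nu}$-limit of elements of $C_c^1([0,\infty))$. That limit lies in $\VMO_{\widetilde\nu}(\Rp)$, so by Lemma~\ref{lem:Phi-VMO-transfer} the original function lies in $\VMO_{\nu,\alpha}$.

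The only step needing care is the exact identification of the $A_2$ characteristic and of the vanishing conditions under $\Phi$. The three VMO conditions do not correspond one-to-one under the nonlinear map $\Phi$: small Euclidean intervals near infinity can have large $\Phi$-images. This mismatch is precisely what Lemma~\ref{lem:Phi-VMO-transfer} handles, and the remaining steps are bookkeeping.
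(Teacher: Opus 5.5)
Your proposal is correct and follows essentially the same route as the paper. Both arguments transfer the problem through $\Phi$, using Lemma~\ref{lem:Phi-BMO-transfer} and Lemma~\ref{lem:Phi-VMO-transfer} to identify the seminorms and the three vanishing conditions, and then apply Lemma~\ref{lem:half-line-VMO-density}. The only difference is cosmetic: for the inclusion of the closure in $\VMO_{\nu,\alpha}$ you pass the limit to the $t$-coordinate, whereas the paper shows $\Crhoc\subset\VMO_{\nu,\alpha}$ and uses that $\VMO_{\nu,\alpha}$ is closed in the $\BMO_{\nu,\alpha}$ seminorm.
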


\begin{proof}
Use the map $\Phi$ from Lemma~\ref{lem:Phi-VMO-transfer}.  Since
$d\rho_\alpha=d\Phi$, the condition $\nu\in A_2(\rho_\alpha)$ is equivalent to
$\widetilde\nu=\nu\circ\Phi^{-1}\in A_2(\Rp,dt)$, with the same $A_2$
characteristic.  Let $\tilde b=b\circ\Phi^{-1}$.  Lemma~\ref{lem:Phi-VMO-transfer} gives the exact identity
$$
        \|b\|_{\BMO_{\nu,\alpha}}
        =\|\tilde b\|_{\BMO_{\widetilde\nu}(\Rp)}
$$
and identifies the three VMO conditions.  Thus the desired density statement is
reduced to the half-line statement through the chain
$$
        b\in\VMO_{\nu,\alpha}
        \Longleftrightarrow
        \tilde b\in\VMO_{\widetilde\nu}(\Rp)
        =\overline{C_c^1([0,\infty))|_{\Rp}}^{\,\BMO_{\widetilde\nu}(\Rp)}.
$$

If $b\in\VMO_{\nu,\alpha}$, then
$\tilde b\in\VMO_{\widetilde\nu}(\Rp)$.  By Lemma~\ref{lem:half-line-VMO-density},
there are functions $\varphi_k\in C_c^1([0,\infty))$ such that
$$
        \|\tilde b-\varphi_k\|_{\BMO_{\widetilde\nu}(\Rp)}\to0
        \text{ as } k\to\infty.
$$
Set $a_k=\varphi_k\circ\Phi$.  Then $a_k\in\Crhoc$, and the norm identity gives
$$
        \|b-a_k\|_{\BMO_{\nu,\alpha}}\to0
        \text{ as } k\to\infty.
$$
Thus $\VMO_{\nu,\alpha}$ is contained in the closure on the right side of
\eqref{eq:density-VMO}.

Conversely, let $a\in\Crhoc$.  Then
$\varphi=a\circ\Phi^{-1}\in C_c^1([0,\infty))|_{\Rp}$.  Lemma~\ref{lem:half-line-VMO-density}
gives $\varphi\in\VMO_{\widetilde\nu}(\Rp)$, and
Lemma~\ref{lem:Phi-VMO-transfer} gives $a\in\VMO_{\nu,\alpha}$.  Since
$\VMO_{\nu,\alpha}$ is closed in the $\BMO_{\nu,\alpha}$ seminorm by the same
triangle inequality used above, the closure of $\Crhoc$ is contained in
$\VMO_{\nu,\alpha}$.
\end{proof}

\subsection{Comparisons of BMO spaces }\label{subsec:ordinary-bessel-bmo}

This subsection separates two comparisons which are easy to conflate.  The first
comparison is unweighted: the BMO spaces associated with $dm_\alpha$ and
$d\rho_\alpha$ coincide.  The second comparison is a Bloom comparison: after the
Andersen--Kerman conjugation has selected $d\rho_\alpha$ as the base measure,
the two-weight seminorm also contains the Bloom measure in its denominator.  The
resulting space is therefore not obtained by replacing $d\rho_\alpha$ with the
usual Bessel measure $dm_\alpha$.

\subsubsection{The equivalence between two unweighted BMO spaces}

The usual Bessel BMO space is
\begin{equation*}
        \|b\|_{\BMO(m_\alpha)}
        =\sup_{I\subset\Rp}\frac{1}{m_\alpha(I)}
          \int_I |b-b_I^m|\,dm_\alpha,
        \qquad
        b_I^m=\frac{1}{m_\alpha(I)}\int_I b\,dm_\alpha.
\end{equation*}
The unweighted BMO space used by the conjugated operator is
\begin{equation*}
        \|b\|_{\BMO(\rho_\alpha)}
        =\sup_{I\subset\Rp}\frac{1}{\rho_\alpha(I)}
          \int_I |b-b_I^\rho|\,d\rho_\alpha.
\end{equation*}

{ We first compare the two unweighted BMO spaces directly.}
\begin{proposition}
\label{prop:unweighted-BMO-equivalence}
For every $\alpha>-1/2$,
$
        \BMO(m_\alpha)=\BMO(\rho_\alpha)
$
with equivalent norms.  
\end{proposition}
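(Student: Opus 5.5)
The plan is to observe that $\rho_\alpha$ and $m_\alpha$ differ by the weight $x$, i.e. $d\rho_\alpha = x\,dm_\alpha$, and that $x$ is a power weight. Power weights are $A_\infty$, in fact $A_1$-type, on the doubling space $(\Rp,|x-y|,m_\alpha)$. Then invoke the standard fact that BMO is unchanged when the measure is replaced by $w\,d\mu$ with $w\in A_\infty(\mu)$. To keep things self-contained, I would argue directly with intervals, splitting them into two types.

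\textbf{Type 1: intervals $I=(a,a+r)$ with $r\le a$.} On such an interval $x\simeq a$ up to a factor $2$. So the densities satisfy $x^{2\alpha+1}\simeq a\,x^{2\alpha}$ on $I$, uniformly. The two normalized measures $d\rho_\alpha/\rho_\alpha(I)$ and $dm_\alpha/m_\alpha(I)$ then have mutually bounded densities, with constants depending only on $\alpha$. Using
\[
\frac{1}{\mu(I)}\int_I|b-b_I^{\mu}|\,d\mu\le \frac{2}{\mu(I)}\int_I|b-c|\,d\mu
\]
for any constant $c$ and any probability-comparable measure $\mu$, the oscillations over $I$ in the two measures are comparable. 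Choose $c=b_I^m$ in one direction and $c=b_I^\rho$ in the other.

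\textbf{Type 2: intervals with $r>a$.} Such $I$ is contained in $I^*=(0,2r)$, and $\rho_\alpha(I^*)\lesssim\rho_\alpha(I)$, $m_\alpha(I^*)\lesssim m_\alpha(I)$ by the volume computation of Lemma~\ref{lem:volume} (and its analogue for $m_\alpha$). So the oscillation over $I$ is controlled by the oscillation over $I^*$, and it suffices to compare the two BMO quantities on intervals $(0,R)$. Here I would use a Whitney-type decomposition $(0,R)=\bigcup_{k\ge1}J_k$ with $J_k=(2^{-k}R,2^{-k+1}R)$. Each $J_k$ is Type 1, and adjacent $J_k\cup J_{k+1}$ are Type 1 as well.

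For $b\in\BMO(m_\alpha)$, write
\[
\frac1{\rho_\alpha((0,R))}\int_0^R|b-b^m_{(0,R)}|\,d\rho_\alpha
\le\sum_k\frac{\rho_\alpha(J_k)}{\rho_\alpha((0,R))}
\Bigl(\langle|b-b^m_{J_k}|\rangle_{J_k}+|b^m_{J_k}-b^m_{(0,R)}|\Bigr).
\]
The first term is bounded by $\|b\|_{\BMO(m_\alpha)}$ from the Type 1 comparison. For the second, the telescoping chain of adjacent dyadic pieces gives $|b^m_{J_k}-b^m_{(0,R)}|\lesssim k\|b\|_{\BMO(m_\alpha)}$. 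This is the familiar logarithmic growth: compare $J_k$ with $(0,2^{-k+1}R)$ using doubling of $m_\alpha$, and step through $(0,2^{-j}R)$. The weights $\rho_\alpha(J_k)/\rho_\alpha((0,R))\simeq 2^{-k(2\alpha+2)}$ decay geometrically because $2\alpha+2>1>0$. So the sum $\sum_k k\,2^{-k(2\alpha+2)}$ converges and yields the bound $\lesssim\|b\|_{\BMO(m_\alpha)}$. The reverse direction is symmetric, using $m_\alpha(J_k)/m_\alpha((0,R))\simeq 2^{-k(2\alpha+1)}$, which is geometric since $2\alpha+1>0$, i.e.\ precisely $\alpha>-1/2$.

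The main obstacle is this Type 2 estimate near the origin, where the two densities are not comparable. The key points are the geometric decay of both measures on the dyadic shells toward $0$ and the $O(k)$ growth of averages along the chain of nested intervals $(0,2^{-j}R)$. Local integrability near $0$ with respect to both measures is automatic for BMO functions by this same chain argument, since both measures of $(0,r)$ are finite for $\alpha>-1/2$.
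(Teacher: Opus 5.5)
Your proof is correct, but it takes a different route from the paper's.

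\textbf{The paper's route.} It argues abstractly. It shows that any two power measures $x^{\beta_1}\,dx$ and $x^{\beta_2}\,dx$ with $\beta_1,\beta_2>-1$ are mutually $A_\infty$ on intervals. It then combines the John--Nirenberg inequality for the doubling measure $x^{\beta_1}\,dx$ with this $A_\infty$ transfer and the layer-cake formula to control the $x^{\beta_2}\,dx$-oscillation. This is exactly the ``BMO is invariant under an $A_\infty$ change of measure'' principle you mention at the start and then bypass.

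\textbf{Your route.} You work with the explicit one-dimensional geometry:
\begin{itemize}
\item on intervals with $r\le a$, the densities of the two normalized measures are comparable;
\item the remaining intervals reduce to $(0,R)$;
\item a Whitney chain $J_k=(2^{-k}R,2^{-k+1}R)$ lets the geometric decay $2^{-k(2\alpha+2)}$, respectively $2^{-k(2\alpha+1)}$, absorb the $O(k)$ growth of averages.
\end{itemize}
The second decay rate is precisely where $\alpha>-1/2$ enters.

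\textbf{What each buys.} Your argument avoids John--Nirenberg and gives explicit constants. It also proves directly that a $\BMO(\rho_\alpha)$ function is $dm_\alpha$-integrable near $0$. Moreover, it is the same dyadic-annulus mechanism the paper relies on in its VMO corollary, where a bound of the form $\sum_k (k+1)2^{-k(\beta+1)}\Omega_k$ is asserted essentially without proof, and in Proposition~\ref{prop:weighted-rho-m-equivalence}. So your version localizes more transparently and would make that VMO step rigorous almost for free. The paper's argument is shorter once John--Nirenberg is available, and it works verbatim for any pair of mutually $A_\infty$ doubling measures, not only power densities.

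\textbf{A small inaccuracy.} $J_k\cup J_{k+1}=(2^{-k-1}R,2^{-k+1}R)$ is not of your Type~1, since its length is three times its left endpoint. This is harmless: the density ratio there is still at most $4$, and your chain actually runs through the nested intervals $(0,2^{-j}R)$, never through these unions.
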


{ 
\begin{proof}
For $\beta>-1$, write $d\sigma_\beta(x)=x^\beta\,dx$.  If
$\beta_1,\beta_2>-1$, then these two power measures are mutually
$A_\infty$ on intervals: there are constants $C,\delta>0$ such that
\begin{equation}\label{eq:mutual-Ainfty-power-v20}
 \frac{\sigma_{\beta_2}(E)}{\sigma_{\beta_2}(I)}
 \le C\left(
 \frac{\sigma_{\beta_1}(E)}{\sigma_{\beta_1}(I)}
 \right)^\delta,
 \qquad E\subset I,
\end{equation}
and the reverse estimate also holds.  To see this, first compare each power
measure with Lebesgue measure.  Away from zero the density is comparable to
a constant on $I$.  If $I=(a,b)$ with $a<b/2$, then $|I|\simeq b$ and
$\sigma_\beta(I)\simeq_\beta b^{\beta+1}$.  Placing a set of fixed length
at either end of $(0,b)$ gives the two required power bounds.  Combining
them proves \eqref{eq:mutual-Ainfty-power-v20}.

Let $h\in\BMO(\sigma_{\beta_1})$ and put
$A=\|h\|_{\BMO(\sigma_{\beta_1})}$.  The John--Nirenberg inequality for
the doubling measure $\sigma_{\beta_1}$ gives
\[
 \sigma_{\beta_1}
 \bigl(\{x\in I:|h-h_I^{\sigma_{\beta_1}}|>t\}\bigr)
 \le C e^{-ct/A}\sigma_{\beta_1}(I).
\]
Using \eqref{eq:mutual-Ainfty-power-v20}, the layer-cake formula, and the
fact that an average is within a factor two of the best constant, we obtain
\begin{align*}
 \frac1{\sigma_{\beta_2}(I)}
 \int_I|h-h_I^{\sigma_{\beta_2}}|\,d\sigma_{\beta_2}
 &\le
 \frac2{\sigma_{\beta_2}(I)}
 \int_I|h-h_I^{\sigma_{\beta_1}}|\,d\sigma_{\beta_2}\\
 &\le C\int_0^\infty e^{-c\delta t/A}\,dt
 \le C_{\beta_1,\beta_2}A.
\end{align*}
The case $A=0$ is immediate.  Interchanging the two exponents proves the
reverse bound.  Take $\beta_1=2\alpha$ and
$\beta_2=2\alpha+1$.
\end{proof}
}

Based on the BMO equivalence, we further have
\begin{proposition}\label{prop:unweighted-VMO-equivalence}
For every $\alpha>-1/2$,
$
        \VMO(m_\alpha)$ coincides with $\VMO(\rho_\alpha).
$
\end{proposition}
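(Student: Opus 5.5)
Since Proposition~\ref{prop:unweighted-BMO-equivalence} already identifies the two spaces as sets with equivalent norms, it remains to compare the three vanishing conditions. The unweighted VMO conditions are the ones in Definition~\ref{def:VMO-nu} with $\nu\equiv1$: vanishing of the normalized mean oscillation on small intervals, on large intervals, and on intervals inside $(R,\infty)$ with $R\to\infty$. I would prove a \emph{local} version of the comparison from the BMO proposition. Write $\beta_1=2\alpha$, $\beta_2=2\alpha+1$, and for an interval $I$ let $O_\beta(h;I)=\sigma_\beta(I)^{-1}\int_I|h-h_I^{\sigma_\beta}|\,d\sigma_\beta$. The plan is to show
\[
O_{\beta_2}(h;I)\lesssim \sup_{J\subset I}O_{\beta_1}(h;J)
\]
with constants depending only on $\alpha$, together with the symmetric inequality. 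This immediately transfers the far-away condition, because every subinterval of an interval in $(R,\infty)$ again lies in $(R,\infty)$. It also transfers the small-interval condition, because subintervals are shorter.

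To obtain the local inequality I would rerun the John--Nirenberg argument inside the fixed interval $I$. The John--Nirenberg inequality for the doubling measure $\sigma_{\beta_1}$, applied to $h$ restricted to $I$, needs only the oscillation bound $A_I=\sup_{J\subset I}O_{\beta_1}(h;J)$. The standard Calder\'on--Zygmund stopping-time proof uses only subintervals of $I$ (dyadic children in the $\Phi$-type coordinate, or a doubling-adapted decomposition). It gives
\[
\sigma_{\beta_1}(\{x\in I:|h-h_I^{\sigma_{\beta_1}}|>t\})\le Ce^{-ct/A_I}\sigma_{\beta_1}(I).
\]
Then the mutual $A_\infty$ estimate \eqref{eq:mutual-Ainfty-power-v20} and the layer-cake computation exactly as in the proof of Proposition~\ref{prop:unweighted-BMO-equivalence} give $O_{\beta_2}(h;I)\lesssim A_I$. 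This settles the small and far conditions.

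The large-interval condition is the main obstacle, because subintervals of a large interval can be small. I would handle it by a splitting argument instead. Fix $\varepsilon$, and choose $r$ and $R$ so that the $\beta_1$-oscillation is below $\varepsilon$ on intervals of length $\le r$ or contained in $(R,\infty)$, and also on intervals of length $\ge L_0$. For a large interval $I$ with $|I|\ge L\gg\max(L_0,R)$, the subintervals $J\subset I$ fall into three types: short ($|J|\le r$), contained in $(R,\infty)$, or long ($|J|\ge L_0$). The remaining ones meet $(0,R]$ and have length in $(r,L_0)$, so they lie inside $(0,R+L_0)$. These remaining subintervals occupy a proportion of $I$ that tends to zero as $L\to\infty$, in both measures. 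Rather than using $A_I$, I would use a two-piece estimate. Write
\[
\int_I|h-c|\,d\sigma_{\beta_2}=\int_{I\cap(0,K)}+\int_{I\setminus(0,K)},\qquad K=R+L_0,
\]
with $c=h_I^{\sigma_{\beta_1}}$. The first piece is bounded using the global John--Nirenberg bound with $\|h\|_{\BMO}$ and the mutual $A_\infty$ estimate applied to the small set $E=I\cap(0,K)$. This gives a bound $\lesssim\|h\|_{\BMO}\,(\sigma_{\beta_1}(E)/\sigma_{\beta_1}(I))^{\delta'}$ up to log factors, which is small for $L$ large. The second piece lies in $(K,\infty)$, where all subintervals have small oscillation, so the local John--Nirenberg argument above applies to $I\setminus(0,K)$. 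The difference $|h_{I\setminus(0,K)}-c|$ is controlled by the large-interval smallness of $O_{\beta_1}(h;I)$. Together the pieces give $O_{\beta_2}(h;I)\to0$ as $|I|\to\infty$. Interchanging $\beta_1$ and $\beta_2$ gives the reverse inclusion, so $\VMO(m_\alpha)=\VMO(\rho_\alpha)$.
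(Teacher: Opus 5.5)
Your proof is correct, but it follows a different route from the paper's.

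\textbf{The paper's route.} The paper bounds the oscillation of $h$ over $I$ in one power measure by a single series $\sum_k (k+1)2^{-k(\beta+1)}\Omega_k$. Here $\Omega_k$ is an oscillation in the other measure over a dyadic-annulus piece of $I$ of length comparable to $2^{-k}|I|$. All three regimes are then read off from that one estimate. Every piece of a small (resp.\ far-away) interval is small (resp.\ far away). For large intervals one truncates the series: the finitely many retained pieces are all large, and the tail is bounded by the BMO norm times the tail of a convergent series.

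\textbf{Your route.} You localize the John--Nirenberg/mutual-$A_\infty$ argument of Proposition~\ref{prop:unweighted-BMO-equivalence} to get $O_{\beta_2}(h;I)\lesssim\sup_{J\subset I}O_{\beta_1}(h;J)$.

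\begin{itemize}
\item This disposes of the small and far-away conditions at once, because both are inherited by subintervals.
\item It uses only that the two measures are doubling and mutually $A_\infty$, not the explicit power structure.
\item The cost is that the large-interval condition is not inherited by subintervals, so you need the separate split at a fixed point $K$.
\end{itemize}

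\textbf{The split step is sound.} The set $I\cap(0,K)$ carries a proportion $\theta\to0$ of the mass of $I$. Global John--Nirenberg therefore bounds its contribution by $\lesssim\|h\|_{\BMO}\,\theta\log(e/\theta)$. On $I'=I\cap[K,\infty)\subset(R,\infty)$ your local inequality applies. The resulting change of constant is at most $\frac{\sigma_{\beta_1}(I)}{\sigma_{\beta_1}(I')}O_{\beta_1}(h;I)\lesssim\varepsilon$.

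A minor simplification: any $K\ge R$ works here. The classification of subintervals by type and the specific choice $K=R+L_0$ are not actually needed.
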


{ 
\begin{proof}
The comparison above can be applied with the BMO seminorm restricted to the
dyadic pieces of the interval under study.  The resulting bound is a constant
times a sum of the form
\[
        \sum_{k=0}^{\infty}(k+1)2^{-k(\beta+1)}\Omega_k,
        \qquad \beta>-1,
\]
where $\Omega_k$ is an oscillation for the other power measure on an interval
of length comparable to $2^{-k}|I|$ and lying in the same part of the
half-line as $I$.  This is the dyadic-annulus form of the layer-cake argument
used in the preceding proof.

For small intervals, every interval in the sum is small.  For large
intervals, fix the number of terms first: each of these finitely many
intervals becomes large, while the remaining tail is bounded by the BMO norm
times the tail of the convergent series.  If the intervals move to infinity,
all their dyadic pieces also move to infinity.  Thus the comparison preserves
all three limits in the definition of VMO.  Applying it in both directions
proves the result.
\end{proof}
}

\subsubsection{The weighted Bloom space and the usual Bessel measure}

The preceding unweighted comparison has the following weighted form.  It is
important to separate this statement from the Andersen--Kerman admissibility
{ condition.  If the same Bloom weight gives weighted measures
which are finite on every bounded interval, including $(0,r)$, for both
backgrounds, then the two weighted BMO seminorms are equivalent.  The problem
in the full Andersen--Kerman range is that this endpoint finiteness for the
usual Bessel measure need not hold.}

The Andersen--Kerman conjugation fixes the background measure to be
$
        d\rho_\alpha(x)=x^{2\alpha+1}\,dx.
$

Indeed, for a two-weight Andersen--Kerman pair $\mu,\lambda\in A_{p,\alpha}$,
the conjugated weights are
\[
        U_\mu=x^{p-2\alpha-1}\mu,
        \qquad
        U_\lambda=x^{p-2\alpha-1}\lambda,
\]
and hence the Bloom weight selected by the conjugated Calder\'on--Zygmund
problem is
\[
        \left(\frac{U_\mu}{U_\lambda}\right)^{1/p}
        =\left(\frac{\mu}{\lambda}\right)^{1/p}=\nu.
\]
The resulting Bloom seminorm is
\begin{equation}\label{eq:BMO-rho-weighted}
        \|b\|_{\BMO_\nu^{\rho_\alpha}}
        =\sup_I \frac{1}{\int_I\nu\,d\rho_\alpha}
          \int_I |b-b_I^\rho|\,d\rho_\alpha.
\end{equation}
The same base measure $d\rho_\alpha$ is used in the mean $b_I^\rho$, in the
oscillation integral, and in the Bloom measure $\nu\,d\rho_\alpha$.

Since $d\rho_\alpha=x\,dm_\alpha$, the seminorm in
\eqref{eq:BMO-rho-weighted} can be written relative to $dm_\alpha$ as
\[
        \sup_I \frac{1}{\int_I x\nu\,dm_\alpha}
        \int_I |b-b_I^\rho|\,x\,dm_\alpha.
\]
This expression is not the same formula as the one obtained by placing the same
Bloom weight on the usual Bessel background measure:
\begin{equation}\label{eq:BMO-m-weighted}
        \|b\|_{\BMO_\nu^{m_\alpha}}
        =\sup_I \frac{1}{\int_I\nu\,dm_\alpha}
          \int_I |b-b_I^m|\,dm_\alpha.
\end{equation}
{ Nevertheless, once both weighted measures are finite on every
bounded interval, including $(0,r)$, the two formulas define the same space.}

\begin{proposition}\label{prop:weighted-rho-m-equivalence}
Let $\alpha>-1/2$.  Let $\nu$ be a positive measurable function such that both
{ $\nu\,d\rho_\alpha$ and $\nu\,dm_\alpha$ are finite on every
bounded interval, including $(0,r)$.}  Then
\[
        \BMO_\nu^{\rho_\alpha}=\BMO_\nu^{m_\alpha}
\]
with equivalent seminorms.
\end{proposition}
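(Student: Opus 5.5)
The plan is to prove the two inequalities $\|b\|_{\BMO_\nu^{m_\alpha}}\lesssim\|b\|_{\BMO_\nu^{\rho_\alpha}}$ and its reverse by one argument, symmetric in the roles of $dm_\alpha=x^{2\alpha}dx$ and $d\rho_\alpha=x^{2\alpha+1}dx$. Since $\nu$ is only assumed positive with locally finite weighted measures, and no doubling or $A_2$ property is available, the argument cannot enlarge intervals. Every estimate must therefore be made inside the given interval $I$, using pieces on which the two measures are comparable up to a constant factor. Write $\sigma,\tau$ for the two measures ($\{\sigma,\tau\}=\{m_\alpha,\rho_\alpha\}$), and assume $\|b\|_{\BMO_\nu^{\sigma}}=A<\infty$.

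\emph{Step 1 (intervals where $x$ is essentially constant).} Let $J=(a',b')$ with $b'\le 4a'$. On $J$ we have $d\rho_\alpha=x\,dm_\alpha$ with $x\simeq b'$, so $\tau|_J\simeq c_J\,\sigma|_J$ for a constant $c_J$, and likewise $\nu\,d\tau\simeq c_J\,\nu\,d\sigma$ on $J$. The mean $b_J^\tau$ may be replaced by $b_J^\sigma$, at the cost of a factor $2$, because $\int_J|b-b_J^\tau|\,d\tau\le 2\int_J|b-c|\,d\tau$ for every constant $c$. Hence
\[
\int_J|b-b_J^\tau|\,d\tau\lesssim c_J\int_J|b-b_J^\sigma|\,d\sigma\le c_J A\,\nu_\sigma(J)\simeq A\,\nu_\tau(J).
\]
This settles every interval with $b'\le4a'$.

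\emph{Step 2 (intervals reaching near $0$).} Let $I=(a,b')$ with $a<b'/4$ (allowing $a=0$). Cut $I$ into Whitney pieces $I_k=I\cap(2^{-k-1}b',2^{-k}b']$ for $k=0,1,\dots$. When $a>0$, merge the final short piece with its neighbour, so that each piece satisfies the hypothesis of Step~1. Put $J_k=I_k\cup I_{k+1}\subset I$; these also satisfy the Step~1 hypothesis. Take the constant $c=b^\tau_{I_0}$. Then
\[
\int_I|b-c|\,d\tau\le\sum_k\int_{I_k}|b-b_{I_k}^\tau|\,d\tau+\sum_k\tau(I_k)\,|b_{I_k}^\tau-b_{I_0}^\tau|.
\]
The first sum is at most $CA\sum_k\nu_\tau(I_k)\le CA\,\nu_\tau(I)$ by Step~1. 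For the second, telescope:
\[
|b^\tau_{I_j}-b^\tau_{I_{j+1}}|\lesssim\frac{1}{\tau(J_j)}\int_{J_j}|b-b^\tau_{J_j}|\,d\tau\lesssim A\,\frac{\nu_\tau(J_j)}{\tau(J_j)},
\]
where the first inequality uses that $\tau(I_j),\tau(I_{j+1})\simeq\tau(J_j)$.

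\emph{Step 3 (summation; the main point).} Summing the telescoped bounds gives
\[
\sum_k\tau(I_k)\sum_{j<k}\frac{\nu_\tau(J_j)}{\tau(J_j)}=\sum_j\nu_\tau(J_j)\sum_{k>j}\frac{\tau(I_k)}{\tau(J_j)}.
\]
For both power measures $\tau(I_k)/\tau(J_j)\lesssim 2^{-(k-j)(\beta+1)}$, with $\beta\in\{2\alpha,2\alpha+1\}$ and $\beta+1>0$ because $\alpha>-1/2$. So the inner sum is $O(1)$, and the total is $\lesssim\sum_j\nu_\tau(J_j)\le2\nu_\tau(I)$.

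This geometric decay, which absorbs the chaining without any doubling property of $\nu$, is the step I expect to be the crux. The endpoint hypothesis enters exactly here and in Step~2 when $a=0$. Finiteness of $\nu_\tau((0,r))$ makes the right-hand side finite. The same bound applied to $\int_I|b-c|\,d\tau$ also shows that $b$ is $\tau$-integrable near $0$, so that $b_I^\tau$ is defined. Combining Steps~2 and~3 with the factor-$2$ replacement of $c$ by $b_I^\tau$ gives $\int_I|b-b_I^\tau|\,d\tau\lesssim A\,\nu_\tau(I)$.

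Applying this argument with $(\sigma,\tau)=(\rho_\alpha,m_\alpha)$ and then with $(\sigma,\tau)=(m_\alpha,\rho_\alpha)$ yields the equivalence of seminorms, with constants depending only on $\alpha$.
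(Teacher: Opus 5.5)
Your proof is correct and follows essentially the same route as the paper: local comparability of $dm_\alpha$ and $d\rho_\alpha$ on intervals with bounded endpoint ratio, a dyadic decomposition of $I$ toward the origin, and chaining of averages whose telescoping sum is absorbed by the geometric decay $\tau(I_k)\lesssim 2^{-(k-j)(\beta+1)}\tau(J_j)$. The differences are cosmetic. You merge the short final piece, whereas the paper estimates that last link separately, and you treat $(0,b)$ directly rather than letting $a\downarrow0$. One small fix: after merging, $J_{N-2}$ can have endpoint ratio up to $8$, so Step~1 should be stated for an arbitrary fixed ratio rather than $4$.
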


\begin{proof}
We prove a slightly more general statement.  For $\beta>-1$, put
$d\sigma_\beta(x)=x^\beta\,dx$.  { If
$\nu\,d\sigma_\beta$ is finite on every bounded interval, including $(0,r)$,
set}
\[
        \|f\|_{\beta,\nu}^{\#}
        =\sup_I\frac{1}{\int_I\nu\,d\sigma_\beta}
          \inf_{c\in\mathbb R}\int_I |f-c|\,d\sigma_\beta.
\]
This seminorm is equivalent, up to the factor $2$, to the corresponding
seminorm defined with the average $f_I^{\sigma_\beta}$.

Fix $\beta_1,\beta_2>-1$ and assume
$\|f\|_{\beta_1,\nu}^{\#}<\infty$. Let $I\subset\Rp$ be an interval. If $I=(a,b)$ with $b\le 2a$, then $x$ is comparable to $a$ on $I$.  Hence
\[
\int_I \nu\,d\sigma_{\beta_2}\simeq a^{\beta_2-\beta_1} \int_I \nu\,x^{\beta_1}dx=a^{\beta_2-\beta_1} \int_I \nu d\sigma_{\beta_1},
\]
and then
\[
        \inf_c\int_I |f-c|\,d\sigma_{\beta_2}
        \simeq a^{\beta_2-\beta_1} \inf_c\int_I |f-c|\,d\sigma_{\beta_1}
        \lesssim \|f\|_{\beta_1,\nu}^{\#}a^{\beta_2-\beta_1}\int_I \nu\,d\sigma_{\beta_1}
        =\|f\|_{\beta_1,\nu}^{\#}\int_I \nu\,d\sigma_{\beta_2}.
\]

{ 
Suppose next that $I=(a,b)$ with $0<a<b/2$.  Choose $N\ge1$ so that
\[
        2^{-N-1}b\le a<2^{-N}b,
\]
and set
\[
 J_k=(2^{-k-1}b,2^{-k}b),\quad 0\le k<N,
 \qquad J_N=(a,2^{-N}b).
\]
These intervals form a disjoint cover of $I$.  Put
$c_k=(f)_{J_k}^{\sigma_{\beta_1}}$ for $0\le k\le N$.  On every $J_k$,
$x$ is comparable to $2^{-k}b$, and hence
\begin{equation}\label{eq:weighted-power-local-piece}
 \int_{J_k}|f-c_k|\,d\sigma_{\beta_2}
 \lesssim
 \|f\|_{\beta_1,\nu}^{\#}
 \int_{J_k}\nu\,d\sigma_{\beta_2}.
\end{equation}

For $1\le j<N$, let $K_j=J_{j-1}\cup J_j$.  The two parts of $K_j$
have comparable measure for either power measure.  Since the average gives
at most twice the best constant in the mean oscillation,
\[
 |c_j-c_{j-1}|
 \lesssim
 \|f\|_{\beta_1,\nu}^{\#}
 \frac{\int_{K_j}\nu\,d\sigma_{\beta_1}}
      {\sigma_{\beta_1}(K_j)}.
\]
Moreover,
$\sum_{k=j}^N\sigma_{\beta_2}(J_k)\lesssim\sigma_{\beta_2}(J_j)$.
It follows, by comparing the two powers on $K_j$, that
\begin{equation}\label{eq:weighted-power-regular-links}
 \sum_{j=1}^{N-1}|c_j-c_{j-1}|
       \sum_{k=j}^N\sigma_{\beta_2}(J_k)
 \lesssim
 \|f\|_{\beta_1,\nu}^{\#}
 \int_I\nu\,d\sigma_{\beta_2}.
\end{equation}

The last interval $J_N$ may be short, so we treat it separately.  With
$K_N=J_{N-1}\cup J_N$, comparison of the two power densities on $K_N$
gives
\begin{equation}\label{eq:weighted-power-last-link}
 \sigma_{\beta_2}(J_N)|c_N-c_{N-1}|
 \lesssim
 \|f\|_{\beta_1,\nu}^{\#}
 \int_{K_N}\nu\,d\sigma_{\beta_2}.
\end{equation}
Indeed, insert $(f)_{K_N}^{\sigma_{\beta_1}}$ between the two averages, and
for the $J_N$ term divide by $\sigma_{\beta_1}(J_N)$, and for the
$J_{N-1}$ term use
$\sigma_{\beta_2}(J_N)\le C\sigma_{\beta_2}(J_{N-1})$.

By telescoping the constants $c_k$ and using
\eqref{eq:weighted-power-regular-links}--
\eqref{eq:weighted-power-last-link}, we obtain
\[
 \sum_{k=0}^N\sigma_{\beta_2}(J_k)|c_k-c_0|
 \lesssim
 \|f\|_{\beta_1,\nu}^{\#}
 \int_I\nu\,d\sigma_{\beta_2}.
\]
Together with \eqref{eq:weighted-power-local-piece}, this yields
\[
 \int_I|f-c_0|\,d\sigma_{\beta_2}
 \lesssim
 \|f\|_{\beta_1,\nu}^{\#}
 \int_I\nu\,d\sigma_{\beta_2}.
\]
Intervals of the form $(0,b)$ follow by applying this estimate to
$(a,b)$ and letting $a\downarrow0$.}

Thus $\|f\|_{\beta_2,\nu}^{\#}\lesssim\|f\|_{\beta_1,\nu}^{\#}$.  Interchanging
$\beta_1$ and $\beta_2$ gives the reverse inequality.  Taking
$\beta_1=2\alpha$ and $\beta_2=2\alpha+1$ proves the proposition.
\end{proof}

Consequently, under the local-finiteness assumption neither inclusion between
$\BMO_\nu^{\rho_\alpha}$ and $\BMO_\nu^{m_\alpha}$ can fail.  The
obstruction in the Andersen--Kerman theory is instead that the hypotheses
$\mu,\lambda\in A_{p,\alpha}$ guarantee the $d\rho_\alpha$-based Bloom measure,
but not the $dm_\alpha$-based one.  The next proposition records this point.

\begin{proposition}\label{prop:rho-not-m}
Let $1<p<\infty$ and $\alpha>-1/2$.  Let $\mu,\lambda\in A_{p,\alpha}$ and put
\[
        U_\mu=x^{p-2\alpha-1}\mu,
        \qquad
        U_\lambda=x^{p-2\alpha-1}\lambda,
        \qquad
        \nu=\left(\frac{\mu}{\lambda}\right)^{1/p}.
\]
Then $U_\mu,U_\lambda\in A_p(\rho_\alpha)$ and
\[
        \nu=\left(\frac{U_\mu}{U_\lambda}\right)^{1/p}.
\]
Consequently $\nu\in A_2(\rho_\alpha)$, and the Bloom space naturally attached
to the Andersen--Kerman pair is $\BMO_\nu^{\rho_\alpha}$.

{ The same hypotheses do not imply that $\nu\,dm_\alpha$ is finite
on every interval $(0,r)$.  There are pairs $\mu,\lambda\in A_{p,\alpha}$
such that $\nu\,d\rho_\alpha$ is finite on these intervals, whereas
$\nu\,dm_\alpha$ is not.  For such a pair, the expression in
\eqref{eq:BMO-m-weighted} is not a weighted BMO seminorm with respect to a
measure which is finite at the endpoint.  Hence}
$\BMO_\nu^{m_\alpha}$ cannot replace $\BMO_\nu^{\rho_\alpha}$ in the full
Andersen--Kerman two-weight range without an additional local-finiteness
assumption.
\end{proposition}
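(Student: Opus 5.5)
The first assertion needs no new work. By Proposition~\ref{prop:exact-Ap}, $\mu,\lambda\in A_{p,\alpha}$ is equivalent to $U_\mu,U_\lambda\in A_p(\rho_\alpha)$. The factor $x^{p-2\alpha-1}$ cancels in the quotient, so $(U_\mu/U_\lambda)^{1/p}=(\mu/\lambda)^{1/p}=\nu$. Lemma~\ref{lem:Bloom-weight-algebra} then gives $\nu\in A_2(\rho_\alpha)$. In particular $\nu\,d\rho_\alpha$ is finite on every bounded interval, including $(0,r)$, because an $A_2(\rho_\alpha)$ weight is locally integrable for the doubling measure $\rho_\alpha$. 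So the oscillation \eqref{eq:BMO-rho-weighted} is a genuine Bloom seminorm, and this is the space attached to the pair.

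For the second assertion I will build an explicit pair of power weights. Write $\mu=x^{s}$ and $\lambda=x^{t}$. By the power-weight criterion stated after Theorem~\ref{thm:main-bounded}, $x^\sigma\in A_{p,\alpha}$ if and only if $-1-p<\sigma<2\alpha p+p-1$. This can also be checked directly: $U_{x^\sigma}=x^{p-2\alpha-1+\sigma}$, and a power $x^a$ lies in $A_p(x^{\beta}dx)$, $\beta=2\alpha+1>0$, exactly when $-(\beta+1)<a<(\beta+1)(p-1)$. Then $\nu=x^{\gamma}$ with $\gamma=(s-t)/p$. We need
\[
\nu\,d\rho_\alpha=x^{\gamma+2\alpha+1}dx \text{ integrable at } 0,
\qquad
\nu\,dm_\alpha=x^{\gamma+2\alpha}dx \text{ not integrable at } 0,
\]
that is, $-2\alpha-2<\gamma\le-2\alpha-1$.

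I take the endpoint $\gamma=-2\alpha-1$, so that $\nu\,dm_\alpha=x^{-1}dx$. Concretely, set $t=2\alpha p+\tfrac p2-1$ and $s=t+p\gamma=-1-\tfrac p2$. Both lie in $(-1-p,\,2\alpha p+p-1)$. For $t$ this uses $\tfrac p2<p$ and $2\alpha p+\tfrac p2-1>-1-p$, which holds since $2\alpha p>-p$. For $s$ it is immediate since $-1-\tfrac p2>-1-p$ and $\alpha>-1/2$. Hence $\mu,\lambda\in A_{p,\alpha}$ and $\nu=x^{-2\alpha-1}$. Then $\int_0^r\nu\,d\rho_\alpha=r$ is finite, while $\int_0^r\nu\,dm_\alpha=\int_0^r x^{-1}dx=\infty$.

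Finally, I will remark on what this does to \eqref{eq:BMO-m-weighted}. For every interval $(0,r)$ the denominator is infinite, so these intervals impose no condition at all. The expression is therefore not a weighted BMO seminorm for a measure finite at the endpoint, and the hypothesis of Proposition~\ref{prop:weighted-rho-m-equivalence} fails. The only step needing care is the bookkeeping of the power-weight range, together with checking that $-2\alpha-1$ lies inside the admissible difference window $(-(2\alpha+2),2\alpha+2)$ for $\gamma$. That window is forced by the two power ranges, and the containment holds because $-2\alpha-1>-2\alpha-2$ and $-2\alpha-1<2\alpha+2$ for $\alpha>-1/2$.
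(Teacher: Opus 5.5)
Your proposal is correct and follows the same route as the paper. The first part goes through Proposition~\ref{prop:exact-Ap} and Lemma~\ref{lem:Bloom-weight-algebra}, and the second through an explicit pair of power weights checked against the range $-1-p<\sigma<2\alpha p+p-1$. The only difference is the exponent: you take the endpoint $\nu=x^{-2\alpha-1}$, so that $\nu\,dm_\alpha=x^{-1}dx$, while the paper takes $\nu=x^{-2\alpha-3/2}$ via $\mu=x^{-p/2-1-\varepsilon}$ and $\lambda=x^{2\alpha p+p-1-\varepsilon}$. Both choices are admissible.
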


\begin{proof}
Proposition \ref{prop:exact-Ap} gives $U_\mu,U_\lambda\in A_p(\rho_\alpha)$.  The identity
\[
        \left(\frac{\mu}{\lambda}\right)^{1/p}
        =
        \left(\frac{x^{p-2\alpha-1}\mu}{x^{p-2\alpha-1}\lambda}\right)^{1/p}
\]
is immediate.  Lemma~\ref{lem:Bloom-weight-algebra} gives
$\nu\in A_2(\rho_\alpha)$.

It remains to check that local finiteness with respect to $dm_\alpha$ is not
forced by the Andersen--Kerman assumptions.  We use power weights.  If
$w(x)=x^\beta$, then a direct calculation gives that the Andersen--Kerman condition \eqref{eq:intro-AK} is
equivalent to
\begin{equation}\label{eq:AK-power-range}
        -p-1<\beta<2\alpha p+p-1.
\end{equation}

Choose $0<\varepsilon<p/2$ and set
$
        \lambda(x)=x^{2\alpha p+p-1-\varepsilon},
        \ 
        \mu(x)=x^{-p/2-1-\varepsilon}.
$
Both exponents lie in the range \eqref{eq:AK-power-range}, hence
$\mu,\lambda\in A_{p,\alpha}$.  Their Bloom weight is
\[
        \nu(x)=\left(\frac{\mu(x)}{\lambda(x)}\right)^{1/p}
        =x^{-2\alpha-3/2}.
\]
Therefore
\[
        \int_0^1 \nu\,d\rho_\alpha
        =\int_0^1 x^{-2\alpha-3/2}x^{2\alpha+1}\,dx
        =\int_0^1 x^{-1/2}\,dx<\infty,
\]
whereas
\[
        \int_0^1 \nu\,dm_\alpha
        =\int_0^1 x^{-2\alpha-3/2}x^{2\alpha}\,dx
        =\int_0^1 x^{-3/2}\,dx=\infty.
\]
{ Thus $\nu\,d\rho_\alpha$ is finite on intervals $(0,r)$, but
$\nu\,dm_\alpha$ is not.}  This proves the assertion. The proof is complete.
\end{proof}

The same example also shows why an $A_2(m_\alpha)$ hypothesis would be an
additional assumption.  For a power weight $\nu(x)=x^\gamma$,
\[
        \nu\in A_2(\rho_\alpha)
        \quad\Longleftrightarrow\quad
        -2\alpha-2<\gamma<2\alpha+2,
\]
whereas
\[
        \nu\in A_2(m_\alpha)
        \quad\Longleftrightarrow\quad
        -2\alpha-1<\gamma<2\alpha+1.
\]
The exponent $\gamma=-2\alpha-3/2$ belongs to the first interval and not to the
second.

\subsubsection{Non-comparability with the unweighted Bessel BMO space}

The preceding subsection compares the two possible weighted Bloom
normalizations.  This is separate from the comparison with the ordinary,
unweighted Bessel BMO class.  When $\mu=\lambda$, the Bloom weight is
$\nu\equiv1$, and the main theorem reduces to the one-weight characterization
by the unweighted BMO space associated with $d\rho_\alpha$.  By
Proposition~\ref{prop:unweighted-BMO-equivalence}, this agrees with the usual
Bessel BMO space associated with $dm_\alpha=x^{2\alpha}dx$.

The next
proposition shows that, even for Bloom weights arising from Andersen--Kerman
pairs, the weighted Bloom BMO space need not contain, nor be contained in, the
ordinary Bessel BMO space.

\begin{proposition}\label{prop:weighted-unweighted-noncomparison}
Let $\alpha>-1/2$ and let $m=2\alpha+2$.  Choose
$0<\gamma<m$ and set
$
        \nu(x)=x^\gamma.
$
Then $\nu\in A_2(\rho_\alpha)$.  Moreover, $\nu$ can be realized as an
Andersen--Kerman Bloom weight: for $p=2$, if
$$
        \mu_0(x)=x^{2\alpha-1+\gamma},
        \qquad
        \lambda_0(x)=x^{2\alpha-1-\gamma},
$$
then $\mu_0,\lambda_0\in A_{2,\alpha}$ and
$$
        \nu=\left(\frac{\mu_0}{\lambda_0}\right)^{1/2}.
$$
For this weight $\nu$,
$$
        \BMO(\rho_\alpha)\not\subset \BMO_\nu^{\rho_\alpha},
        \qquad
        \BMO_\nu^{\rho_\alpha}\not\subset \BMO(\rho_\alpha).
$$
Equivalently, since $\BMO(m_\alpha)=\BMO(\rho_\alpha)$ by
Proposition~\ref{prop:unweighted-BMO-equivalence}, the same non-comparability
holds with the usual Bessel BMO space $\BMO(m_\alpha)$ in place of
$\BMO(\rho_\alpha)$.
\end{proposition}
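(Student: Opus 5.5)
I will verify the three assertions in turn, working with power functions throughout so that all the quantities are explicit.

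\emph{Membership of $\nu$ and realization as a Bloom weight.}  By the power-weight criterion recorded after Proposition~\ref{prop:rho-not-m}, $x^\gamma\in A_2(\rho_\alpha)$ if and only if $-m<\gamma<m$, so $\nu\in A_2(\rho_\alpha)$ for $0<\gamma<m$.  For the realization, the range \eqref{eq:AK-power-range} with $p=2$ reads $-3<\beta<4\alpha+1$.  The exponent $2\alpha-1\pm\gamma$ lies in this range exactly when $-2\alpha-2<\pm\gamma<2\alpha+2$, which holds since $0<\gamma<m$.  Equivalently, $U_{\mu_0}=x^{\gamma}$ and $U_{\lambda_0}=x^{-\gamma}$ both belong to $A_2(\rho_\alpha)$.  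Finally $(\mu_0/\lambda_0)^{1/2}=x^{\gamma}$ is immediate.

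\emph{$\BMO(\rho_\alpha)\not\subset\BMO_\nu^{\rho_\alpha}$.}  I take $b=\log x$, which lies in $\BMO(\rho_\alpha)$ by dilation invariance.  Indeed $\rho_\alpha$ is homogeneous, so the oscillation of $\log x$ over $(a,b)$ depends only on $a/b$.  It is bounded both for $b\le 2a$ and for $a<b/2$, where it is comparable to the case $a=0$.  Now test on $I_r=(r,2r)$ with $r\to0$.  There $\int_{I_r}|b-b_{I_r}|\,d\rho_\alpha\simeq\rho_\alpha(I_r)\simeq r^{m}$, since the oscillation of $\log x$ on $(r,2r)$ is a fixed positive constant.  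On the other hand $\nu_\alpha(I_r)\simeq r^{m+\gamma}$.  The ratio is therefore $\simeq r^{-\gamma}\to\infty$, so $b\notin\BMO_\nu^{\rho_\alpha}$.

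\emph{$\BMO_\nu^{\rho_\alpha}\not\subset\BMO(\rho_\alpha)$.}  Here I take $b=x^{\gamma}$, or more flexibly $b=x^{\kappa}$ with $0<\kappa\le\gamma$; the choice $\kappa=\gamma$ should work.  This $b$ is unbounded at infinity and is not in $\BMO(\rho_\alpha)$: on $(0,R)$ its oscillation is $\simeq R^{\gamma}$ times a constant, by scaling, and this tends to infinity.  To see $b\in\BMO_\nu^{\rho_\alpha}$, I split intervals into two cases.
\begin{itemize}
\item If $I=(a,c)$ with $c\le 2a$, the mean value theorem gives $|b-b_I|\lesssim a^{\gamma-1}|I|$, and $\nu\simeq a^\gamma$ on $I$.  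The ratio is then $\lesssim |I|/a\le1$.
\item If $a<c/2$, then $\int_I|b-b_I|\,d\rho_\alpha\le 2\int_I b\,d\rho_\alpha=2\nu_\alpha(I)$, because $b=\nu\ge0$.
\end{itemize}
The second case is in fact trivial for every interval, since $b=\nu$ and $\int_I|b-b_I|\,d\rho_\alpha\le2\int_I|b|\,d\rho_\alpha$.  Hence $\|\nu\|_{\BMO_\nu^{\rho_\alpha}}\le2$, and $b=\nu$ is the cleanest choice.

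The last sentence of the statement follows from Proposition~\ref{prop:unweighted-BMO-equivalence}.  I do not expect any step to pose a real obstacle.  The only things to check carefully are the scaling estimates for $\log x$ and the unboundedness of the oscillation of $x^\gamma$ in $\BMO(\rho_\alpha)$.  The latter is the sequence $R^\gamma c_0$ with $c_0>0$, where $c_0$ is the oscillation of $x^\gamma$ over $(0,1)$ with respect to $\rho_\alpha$.
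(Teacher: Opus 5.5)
Your proof is correct and follows the paper's argument essentially step for step: the same test functions $\log x$ and $b=\nu=x^\gamma$, the same bound $\int_I|b-b_I|\,d\rho_\alpha\le 2\nu_\alpha(I)$, and the same scaling on $(0,R)$. The only variations are that you check $\log x\in\BMO(\rho_\alpha)$ by dilation invariance rather than by the change of variables $t=\Phi(x)$, and that you test $\log x$ on $(r,2r)$ rather than on $(0,r)$. One aside is wrong and should be dropped: $x^\kappa$ with $0<\kappa<\gamma$ is not in $\BMO_\nu^{\rho_\alpha}$, because its oscillation ratio on $(0,r)$ is $\simeq r^{\kappa-\gamma}$, which blows up as $r\to0$; among powers, only $\kappa=\gamma$ works.
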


\begin{proof}
Recall that $x^\theta\in A_2(\rho_\alpha)$ if and only if $-m<\theta<m$.  Hence
$\nu=x^\gamma\in A_2(\rho_\alpha)$.  

Also, for $p=2$, let 
$
        U_w=x^{1-2\alpha}w.
$
Thus
$
        U_{\mu_0}(x)=x^\gamma
$
and
$
        U_{\lambda_0}(x)=x^{-\gamma}
$
both belong to $A_2(\rho_\alpha)$. Therefore
$\mu_0,\lambda_0\in A_{2,\alpha}$ by Proposition \ref{prop:exact-Ap}. Moreover, 
$$
        \left(\frac{\mu_0}{\lambda_0}\right)^{1/2}
        =x^\gamma=\nu.
$$

We first give a function in the ordinary Bessel BMO space which is not in the
weighted Bloom space.  Let
$$
        b_1(x)=\log x.
$$
The change of variables
$$
        t=\Phi(x)=\rho_\alpha((0,x))=\frac{x^m}{m}
$$
sends $d\rho_\alpha$ to Lebesgue measure $dt$, and
$$
        \log x=\frac1m\log(mt).
$$
Since $\log t\in \BMO(\Rp)$, it follows that
$b_1\in \BMO(\rho_\alpha)$.

However, $b_1\notin \BMO_\nu^{\rho_\alpha}$.  For
$I_r=(0,r)$,
$$
        (b_1)_{I_r}^{\rho}
        =
        \frac{1}{\rho_\alpha(I_r)}
        \int_{I_r}\log x\,d\rho_\alpha(x)
        =
        \log r-\frac1m.
$$
Therefore
\begin{align*}
        \int_{I_r}
        \left|\log x-(b_1)_{I_r}^{\rho}\right|\,d\rho_\alpha(x)
        =
        \int_0^r
        \left|\log x-\log r+\frac1m\right|x^{m-1}\,dx                                    
        =
        r^m
        \int_0^1
        \left|\log s+\frac1m\right|s^{m-1}\,ds                                           
        = c_m r^m,
\end{align*}
where $0<c_m<\infty$.  On the other hand,
$$
        \int_{I_r}\nu\,d\rho_\alpha
        =
        \int_0^r x^\gamma x^{m-1}\,dx
        =
        \frac{r^{m+\gamma}}{m+\gamma}.
$$
Hence
$$
        \frac{1}{\int_{I_r}\nu\,d\rho_\alpha}
        \int_{I_r}
        \left|\log x-(b_1)_{I_r}^{\rho}\right|\,d\rho_\alpha(x)
        =
        c_m(m+\gamma)r^{-\gamma}\to\infty
$$
as $r\to0$.  Thus
$$
        b_1\in\BMO(\rho_\alpha)
        \quad\text{but}\quad
        b_1\notin\BMO_\nu^{\rho_\alpha}.
$$

We next give a function in the weighted Bloom space which is not in the ordinary
Bessel BMO space.  Let
$$
        b_2(x)=x^\gamma=\nu(x).
$$
For every interval $I\subset\Rp$,
\begin{align*}
        \int_I |b_2-(b_2)_I^\rho|\,d\rho_\alpha
        &\le
        \int_I b_2\,d\rho_\alpha
        +
        (b_2)_I^\rho\,\rho_\alpha(I)                                      
        =
        2\int_I x^\gamma\,d\rho_\alpha
        =
        2\int_I \nu\,d\rho_\alpha.
\end{align*}
Therefore
$$
        \|b_2\|_{\BMO_\nu^{\rho_\alpha}}\le 2.
$$
Thus $b_2\in\BMO_\nu^{\rho_\alpha}$.

On the other hand, $b_2\notin\BMO(\rho_\alpha)$.  Taking $I_R=(0,R)$, we have
$$
        (b_2)_{I_R}^{\rho}
        =
        \frac{1}{\rho_\alpha(I_R)}
        \int_{I_R}x^\gamma\,d\rho_\alpha(x)
        =
        \frac{m}{m+\gamma}R^\gamma.
$$
Hence
\begin{align*}
        \frac1{\rho_\alpha(I_R)}
        \int_{I_R}|b_2-(b_2)_{I_R}^{\rho}|\,d\rho_\alpha
        &=
        \frac{m}{R^m}
        \int_0^R
        \left|x^\gamma-\frac{m}{m+\gamma}R^\gamma\right|x^{m-1}\,dx       
        =
        m R^\gamma
        \int_0^1
        \left|s^\gamma-\frac{m}{m+\gamma}\right|s^{m-1}\,ds.
\end{align*}
The last integral is a positive finite constant depending only on
$m$ and $\gamma$.  Therefore the mean oscillation over $(0,R)$ tends to
infinity as $R\to\infty$, and so $b_2\notin\BMO(\rho_\alpha)$.

The two examples prove both failures of inclusion.  The final statement
with $\BMO(m_\alpha)$ follows from
Proposition~\ref{prop:unweighted-BMO-equivalence}.
\end{proof}

\section{The conjugated Bessel Riesz transform}\label{sec:kernel}

Throughout this section $\alpha>-1/2$ and $\alpha\ne0$, as in the main theorems.
Let $\KAm(x,y)$ denote the kernel of $R_\alpha$ with respect to the usual Bessel
measure $dm_\alpha(y)=y^{2\alpha}\,dy$:
\begin{equation*}
        R_\alpha f(x)=\pvs\int_0^\infty \KAm(x,y)f(y)\,dm_\alpha(y).
\end{equation*}
By \eqref{eq:Rcal-def}, the kernel of the conjugated operator
$\mathcal R_\alpha$ with respect to $d\rho_\alpha(y)=y^{2\alpha+1}\,dy$ is
\begin{equation*}
        \KArho(x,y)=\frac{1}{x}\KAm(x,y).
\end{equation*}
If $f=yF$, then
\begin{equation*}
        \mathcal R_\alpha F(x)
        =\frac{1}{x}\pvs\int_0^\infty \KAm(x,y)yF(y)y^{2\alpha}\,dy
        =\pvs\int_0^\infty \KArho(x,y)F(y)\,d\rho_\alpha(y).
\end{equation*}

The point of the conjugation is not only that the underlying measure changes
from $dm_\alpha$ to $d\rho_\alpha$.  It also changes the kernel which has to be
estimated.  In the right far region $y\ge2x$, the $x$-smoothness of
$\KAm(x,y)/x$ depends on the cancellation in
$x\partial_x\KAm(x,y)-\KAm(x,y)$.  Thus the far derivative bound below is a
quotient-kernel estimate, not a consequence of separate bounds for $\KAm$ and
$\partial_x\KAm$.

The following proposition records precisely the kernel used later.

{ 

\begin{proposition}[\cite{Wen}]
\label{prop:quotient-kernel-package}
The kernel $\KArho$ is $C^1$ away from the diagonal.  There is a constant
$C_\alpha$ such that the following estimates hold.

If $x/2<y<2x$ and $x\ne y$, then
\begin{align}
        |\KArho(x,y)|
        &\le C_\alpha\frac{1}{x^{2\alpha+1}|x-y|},
        \label{eq:AK-local-conj-size}\\
        |\partial_x\KArho(x,y)|+|\partial_y\KArho(x,y)|
        &\le C_\alpha\frac{1}{x^{2\alpha+1}|x-y|^2}.
        \label{eq:AK-local-conj-deriv}
\end{align}
If $y\ge2x$, then
\begin{align}
        |\KArho(x,y)|
        &\le C_\alpha\frac{1}{y^{2\alpha+2}},
        \label{eq:AK-right-conj-size}\\
        |\partial_x\KArho(x,y)|+|\partial_y\KArho(x,y)|
        &\le C_\alpha\frac{1}{y^{2\alpha+3}}.
        \label{eq:AK-right-conj-deriv}
\end{align}
If $x\ge2y$, then
\begin{align}
        |\KArho(x,y)|
        &\le C_\alpha\frac{1}{x^{2\alpha+2}},
        \label{eq:AK-left-conj-size}\\
        |\partial_x\KArho(x,y)|+|\partial_y\KArho(x,y)|
        &\le C_\alpha\frac{1}{x^{2\alpha+3}}.
        \label{eq:AK-left-conj-deriv}
\end{align}
Finally, there are constants $\kappa_0\in(0,1)$, $\kappa_\infty>1$, and
$c_\alpha>0$ such that
\begin{align}
        |\KArho(x,y)|
        &\ge c_\alpha\frac{1}{x^{2\alpha+1}(y-x)},
        &&0<\frac{y}{x}-1<\kappa_0,
        \label{eq:local-lower-Kcal}\\
        |\KArho(x,y)|
        &\ge c_\alpha\frac{1}{y^{2\alpha+2}},
        &&y\ge \kappa_\infty x.
        \label{eq:offdiag-lower-Kcal}
\end{align}
In each of the two lower-bound regions the kernel has a fixed sign.
\end{proposition}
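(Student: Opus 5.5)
The plan is to derive every estimate for $\KArho=x^{-1}\KAm$ from an explicit representation of $\KAm$ and its known asymptotics, rather than dividing the standard Bessel bounds by $x$. I will use the subordination formula coming from the Poisson kernel of $\Delta_\alpha$:
\[
\KAm(x,y)=-\frac{2\alpha}{\pi}\int_0^\pi\frac{(x-y\cos\theta)(\sin\theta)^{2\alpha-1}}{(x^2+y^2-2xy\cos\theta)^{\alpha+1}}\,d\theta ,
\]
valid for $\alpha>0$, with the usual modification after an integration by parts in $\theta$ when $-1/2<\alpha<0$. This modification is where $\alpha\ne0$ enters, since the prefactor $2\alpha$ would kill the off-diagonal part at $\alpha=0$. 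The $C^1$ regularity off the diagonal follows by differentiating under the integral, since the denominator stays away from $0$ when $x\ne y$.

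\textbf{Local region $x/2<y<2x$.} Here $x$, $y$ and $\rho_\alpha$-scales are comparable, so $1/x$ is essentially a constant factor. The size and derivative bounds for $\KArho$ then follow from the classical local bounds
\[
|\KAm|\lesssim \frac{x^{-2\alpha}}{|x-y|},\qquad |\nabla\KAm|\lesssim \frac{x^{-2\alpha}}{|x-y|^2}.
\]
The term $\partial_x(1/x)=-x^{-2}$ contributes $x^{-2}\cdot x^{-2\alpha}|x-y|^{-1}$, which is dominated by $x^{-2\alpha-1}|x-y|^{-2}$ because $|x-y|\lesssim x$. For the lower bound \eqref{eq:local-lower-Kcal}, I isolate the singular part of the $\theta$-integral near $\theta=0$. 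It has the Hilbert-type asymptotic
\[
\KAm(x,y)=\frac{c\,x^{-2\alpha}}{x-y}+O\!\left(x^{-2\alpha-1}\log\frac{2x}{|x-y|}\right),
\]
and I choose $\kappa_0$ small enough that the main term dominates, which also fixes the sign.

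\textbf{Far regions.} For $y\ge2x$, write $s=x/y\le1/2$ and expand in $s$:
\[
\KAm(x,y)=y^{-2\alpha-1}\,G(s),\qquad G \text{ smooth on } [0,1/2].
\]
The key claim is that $G(s)=s\,H(s)$ with $H$ smooth. This holds because the $s^0$ coefficient is $\int_0^\pi \cos\theta(\sin\theta)^{2\alpha-1}\,d\theta=0$ by symmetry, reflecting that $\KAm(x,y)$ is odd-like in $x$ near $0$: the Riesz kernel vanishes linearly at $x=0$. Consequently
\[
\KArho(x,y)=y^{-2\alpha-2}H(x/y).
\]
This gives \eqref{eq:AK-right-conj-size} directly, and differentiating in $x$ or $y$ gives \eqref{eq:AK-right-conj-deriv}; this is exactly the quotient cancellation $x\partial_x\KAm-\KAm=O(x^2)$. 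The lower bound \eqref{eq:offdiag-lower-Kcal} needs $H(0)\neq0$. Computing,
\[
H(0)=c_\alpha\!\int_0^\pi\bigl(1-(2\alpha+2)\cos^2\theta\bigr)(\sin\theta)^{2\alpha-1}\,d\theta ,
\]
which is a nonzero multiple of $\alpha$ by Beta-function identities. Continuity of $H$ then fixes the sign for $x/y\le1/\kappa_\infty$.

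For $x\ge2y$, set $t=y/x$, giving
\[
\KAm(x,y)=x^{-2\alpha-1}\,\widetilde G(t),\qquad \widetilde G \text{ smooth},
\]
so dividing by $x$ yields $\KArho=x^{-2\alpha-2}\widetilde G(y/x)$. Both \eqref{eq:AK-left-conj-size} and \eqref{eq:AK-left-conj-deriv} then follow immediately, since here no cancellation is needed.

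\textbf{Main obstacle.} I expect the hardest step to be the right far region: proving $G(0)=0$ while tracking uniformly the smoothness of $H$ for all $\alpha\in(-1/2,0)\cup(0,\infty)$, together with the nonvanishing of $H(0)$. For negative $\alpha$ the weight $(\sin\theta)^{2\alpha-1}$ is not integrable. I will first rewrite the integrand by integrating by parts to $(\sin\theta)^{2\alpha+1}$, and only then expand in $s$.
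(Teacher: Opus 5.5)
Your proposal is correct in outline, but it follows a genuinely different route from the paper's.

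\textbf{How the paper proves it.} The paper does not re-derive any upper bound: all size and derivative estimates in the three regions are quoted from Lemma~3.1 of \cite{Wen}.

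For the near-diagonal lower bound, the paper combines the homogeneity $\KArho(rx,ry)=r^{-2\alpha-2}\KArho(x,y)$ with the local expansion $\KArho(u,v)=b_\alpha u^{-2\alpha-1}(u-v)^{-1}+O\bigl(1+\log^+\frac{1}{|u-v|}\bigr)$ on $[1/2,2]$ from \cite{Wen}. This is the same idea as your Hilbert-type asymptotic.

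For the far lower bound, the paper uses heat-kernel subordination, $\KAm=\pi^{-1/2}\int_0^\infty\partial_xW_s^\alpha\,s^{-1/2}\,ds$. It expands $W_s^\alpha(t,1)$ through the power series of $I_\nu$ and lets $t\to0^+$ by dominated convergence. This gives $\lim_{t\to0^+}\KArho(t,1)=a_\alpha=\Gamma(\alpha+1)/(\sqrt{\pi}\,\Gamma(\alpha+3/2))>0$.

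\textbf{What each route buys.} Your factorization $\KArho(x,y)=y^{-2\alpha-2}H(x/y)$, with $H$ smooth on $[0,1/2]$, buys more in the right far region. The size bound, the derivative bound and the lower bound all follow from one structural fact, and the quotient cancellation becomes transparent. Evaluating your displayed integral with Beta-function identities gives exactly $H(0)=a_\alpha$, the paper's constant. The paper's heat-kernel representation buys uniformity: it is valid for every $\nu=\alpha-1/2>-1$, so $-1/2<\alpha<0$ needs no separate treatment.

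\textbf{Two points to repair.}

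First, for $-1/2<\alpha<0$ you cannot integrate by parts an integral that diverges. Instead, work first with $\alpha>0$. Split $1=\cos^2\theta+\sin^2\theta$ and use $(\sin\theta)^{2\alpha-1}\cos\theta=(2\alpha)^{-1}\frac{d}{d\theta}(\sin\theta)^{2\alpha}$; the boundary terms vanish. This gives
\[
\KAm(x,y)=-\frac{1}{\pi}\Bigl(2\alpha\int_0^\pi g\,(\sin\theta)^{2\alpha+1}\,d\theta-\int_0^\pi \partial_\theta(g\cos\theta)\,(\sin\theta)^{2\alpha}\,d\theta\Bigr),\qquad g=\frac{x-y\cos\theta}{(x^2+y^2-2xy\cos\theta)^{\alpha+1}}.
\]
The right-hand side is integrable and analytic for $\alpha>-1/2$. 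You must then prove that it is the kernel of $R_\alpha$ for $\alpha<0$, for instance by analytic continuation in $\alpha$ of the heat-kernel representation at fixed $x\ne y$.

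Alternatively, $G(0)=0$ and the smoothness of $H$ follow for every $\alpha>-1/2$ from a parity argument. Since $z^{-\nu}I_\nu(z)$ is even and entire, $W_s^\alpha(x,y)$ is even and entire in $x$. Hence $\KAm(\cdot,y)$ is odd near $x=0$, once you justify differentiating under the $s$-integral as in the paper's dominated-convergence bound.

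Second, your remarks about $\alpha\neq0$ are wrong. The prefactor $2\alpha$ is compensated by the $1/\alpha$ blow-up of $\int_0^\pi(\sin\theta)^{2\alpha-1}\,d\theta$. So $H(0)$ is not ``a nonzero multiple of $\alpha$''; it equals $a_\alpha$, which is $2/\pi$ at $\alpha=0$. At $\alpha=0$ the kernel is $\KAm(x,y)=\frac{2x}{\pi(y^2-x^2)}$, which does not vanish off the diagonal. Nothing in your argument actually uses $\alpha\neq0$. In the paper, the exclusion of $\alpha=0$ reflects the scope of the cited estimates, not a degeneration of the kernel.
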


\begin{proof}
The kernel $\KArho$ is the kernel denoted by $\mathcal K_\alpha$ in
\cite{Wen}.  Hence
\eqref{eq:AK-local-conj-size}--\eqref{eq:AK-left-conj-deriv} follow from
Lemma~3.1 of \cite{Wen}.  It remains to prove the two lower bounds.

We first recall the homogeneity used in the proof of that lemma:
\begin{equation*}
        \KArho(rx,ry)=r^{-2\alpha-2}\KArho(x,y),
        \qquad r,x,y>0,\quad x\ne y.
\end{equation*}
We also use the local expansion from the same lemma.  On the fixed interval
$J=[1/2,2]$, there are a constant $b_\alpha\ne0$ and a remainder
$E_\alpha$ such that
\begin{equation*}
        \KArho(u,v)
        =\frac{b_\alpha}{u^{2\alpha+1}(u-v)}+E_\alpha(u,v),
        \qquad u,v\in J,\quad u\ne v,
\end{equation*}
and
\begin{equation*}
        |E_\alpha(u,v)|
        \le C_\alpha\left(1+\log^+\frac{\ell(J)}{|u-v|}\right).
\end{equation*}
Put $v=1+\delta$, where $0<\delta<1/2$.  After changing the constant in
the remainder estimate, we obtain
\begin{equation*}
        \KArho(1,1+\delta)
        =-\frac{b_\alpha}{\delta}+E_\alpha(1,1+\delta),
        \qquad
        |E_\alpha(1,1+\delta)|
        \le C_\alpha\left(1+\log\frac{1}{\delta}\right).
\end{equation*}
Since
\begin{equation*}
        \delta\left(1+\log\frac{1}{\delta}\right)\longrightarrow0
        \qquad\text{as }\delta\longrightarrow0^+,
\end{equation*}
we may choose $\kappa_0\in(0,1/2)$ so that
\begin{equation*}
        |E_\alpha(1,1+\delta)|
        \le \frac{|b_\alpha|}{2\delta},
        \qquad 0<\delta<\kappa_0.
\end{equation*}
It follows that
\begin{equation*}
        |\KArho(1,1+\delta)|
        \ge \frac{|b_\alpha|}{2\delta},
        \qquad 0<\delta<\kappa_0,
\end{equation*}
and the kernel has the sign of $-b_\alpha$ in this region.  Now let
$0<y/x-1<\kappa_0$ and put $\delta=y/x-1$.  By homogeneity,
\begin{align*}
        |\KArho(x,y)|
        &=x^{-2\alpha-2}|\KArho(1,y/x)|
        \ge \frac{|b_\alpha|}{2}
        \frac{x^{-2\alpha-2}}{y/x-1}
        =\frac{|b_\alpha|}{2}
        \frac{1}{x^{2\alpha+1}(y-x)}.
\end{align*}
This proves \eqref{eq:local-lower-Kcal}, including the fixed-sign statement.

We next prove the far lower bound.  For $0<t<1$, set
$        A_\alpha(t):=\KArho(t,1).
$
We show that $A_\alpha$ has a non-zero limit at the origin.  Put
$\nu=\alpha-1/2>-1$.  The heat kernel of $\Delta_\alpha$, with respect to
$dm_\alpha(y)=y^{2\alpha}\,dy$, is
\begin{equation*}
        W_s^\alpha(x,y)
        =\frac{(xy)^{-\nu}}{2s}
        I_\nu\left(\frac{xy}{2s}\right)
        \exp\left(-\frac{x^2+y^2}{4s}\right),
\end{equation*}
where $I_\nu$ is the modified Bessel function.  Since
$R_\alpha=\partial_x\Delta_\alpha^{-1/2}$, its kernel with respect to
$dm_\alpha$ satisfies, away from the diagonal,
\begin{equation*}
        \KAm(x,y)
        =\frac{1}{\sqrt{\pi}}\int_0^\infty
        \partial_xW_s^\alpha(x,y)\,\frac{ds}{\sqrt{s}}.
\end{equation*}
Recall also that $\KArho(x,y)=x^{-1}\KAm(x,y)$.

For each fixed $s>0$, the power series for $I_\nu$ gives
\begin{equation*}
        W_s^\alpha(t,1)
        =C_{\alpha,s}\left[
        1+\left(\frac{1}{16(\nu+1)s^2}-\frac{1}{4s}\right)t^2
        +O_{\alpha,s}(t^4)\right],
\end{equation*}
where
\begin{equation*}
        C_{\alpha,s}
        =\frac{e^{-1/(4s)}}{2\cdot4^\nu\Gamma(\nu+1)s^{\nu+1}}.
\end{equation*}
Therefore
\begin{equation*}
        \lim_{t\longrightarrow0^+}
        \frac{\partial_tW_s^\alpha(t,1)}{t}
        =2C_{\alpha,s}
        \left(\frac{1}{16(\nu+1)s^2}-\frac{1}{4s}\right).
\end{equation*}

We may pass this limit through the $s$-integral.  Indeed, the power series
for $z^{-\nu}I_\nu(z)$ near zero and its standard exponential bound for
large $z$ give, uniformly for $0<t\le1/2$,
\begin{equation*}
        \left|\frac{\partial_tW_s^\alpha(t,1)}{t}\right|
        \le C_\alpha
        \begin{cases}
        s^{-N_\alpha}e^{-c/s},&0<s\le1,\\
        s^{-\nu-2}+s^{-\nu-3},&s\ge1,
        \end{cases}
\end{equation*}
for some $N_\alpha,c>0$.  

To obtain the exponential factor in the first
line, combine the factor $e^{-(1+t^2)/(4s)}$ in the heat kernel with the
factor $e^{t/(2s)}$ from $I_\nu(t/(2s))$.
Since $t\le1/2$, their product is at most $e^{-1/(16s)}$.  After
multiplication by $s^{-1/2}$, the right-hand side is integrable because
$\nu>-1$.  

Dominated convergence now yields
\begin{align*}
        \lim_{t\longrightarrow0^+}A_\alpha(t)
        &=\lim_{t\longrightarrow0^+}\frac{\KAm(t,1)}{t}
        =\frac{1}{\sqrt{\pi}\,4^\nu\Gamma(\nu+1)}
        \left[
        \frac{4^{\nu+5/2}\Gamma(\nu+5/2)}{16(\nu+1)}
        -\frac{4^{\nu+3/2}\Gamma(\nu+3/2)}{4}
        \right]\\
        &=\frac{\Gamma(\nu+3/2)}
        {\sqrt{\pi}\,(\nu+1)\Gamma(\nu+1)}
        =\frac{\Gamma(\alpha+1)}
        {\sqrt{\pi}\,\Gamma(\alpha+3/2)}
        =:a_\alpha>0.
\end{align*}
Here we used
\begin{equation*}
        \int_0^\infty e^{-1/(4s)}s^{-q}\,ds
        =4^{q-1}\Gamma(q-1),
        \qquad q>1.
\end{equation*}

Choose $t_0\in(0,1/2]$ so that
$A_\alpha(t)\ge a_\alpha/2$ for $0<t\le t_0$, and put
$\kappa_\infty=t_0^{-1}$.  If $y\ge\kappa_\infty x$, then $x/y\le t_0$.
Homogeneity with variable $y$ gives
\begin{equation*}
        \KArho(x,y)=y^{-2\alpha-2}A_\alpha(x/y),
\end{equation*}
and hence
\begin{equation*}
        |\KArho(x,y)|
        \ge \frac{a_\alpha}{2}\frac{1}{y^{2\alpha+2}}.
\end{equation*}
The kernel is positive in this region.  This proves
\eqref{eq:offdiag-lower-Kcal}.  Taking the smaller of the two lower-bound
constants completes the proof.
\end{proof}

}

{

\begin{proposition}[\cite{Wen}, Proposition 3.2]
\label{prop:CZ-kernel}
The kernel $K_\alpha^\rho$ is a standard Calder\'on--Zygmund kernel on
$(\Rp,|x-y|,\rho_\alpha)$.  More precisely, there are constants $C_\alpha>0$ and
$\delta\in(0,1]$ such that, for $x\ne y$,
\begin{equation}\label{eq:CZ-size}
        |\KArho(x,y)|
        \le \frac{C_\alpha}{\rho_\alpha(B(x,|x-y|))},
\end{equation}
and whenever $|x-x'|\le |x-y|/2$,
\begin{equation*}
        |\KArho(x,y)-\KArho(x',y)|
        \le
        C_\alpha\left(\frac{|x-x'|}{|x-y|}\right)^\delta
        \frac{1}{\rho_\alpha(B(x,|x-y|))}.
\end{equation*}
The analogous estimate in the $y$ variable also holds.  
The operator
$\mathcal R_\alpha$ is bounded on $L^2(\rho_\alpha)$.
\end{proposition}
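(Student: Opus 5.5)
The plan is to deduce both the size/smoothness estimates and the $L^2(\rho_\alpha)$ bound from Proposition~\ref{prop:quotient-kernel-package} and the exact conjugation of Section~\ref{sec:conjugation}.

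\emph{Size estimate.} By Lemma~\ref{lem:volume}, $\rho_\alpha(B(x,|x-y|))\simeq |x-y|(x+|x-y|)^{2\alpha+1}$, and I would check \eqref{eq:CZ-size} separately in the three regions of Proposition~\ref{prop:quotient-kernel-package}.
\begin{itemize}
\item For $x/2<y<2x$ we have $|x-y|<x$, so the volume is $\simeq |x-y|x^{2\alpha+1}$; this matches \eqref{eq:AK-local-conj-size}.
\item For $y\ge 2x$ we have $|x-y|\simeq y$ and $x+|x-y|\simeq y$, so the volume is $\simeq y^{2\alpha+2}$; this matches \eqref{eq:AK-right-conj-size}.
\item For $x\ge2y$ we have $|x-y|\simeq x$, so the volume is $\simeq x^{2\alpha+2}$; this matches \eqref{eq:AK-left-conj-size}.
\end{itemize}

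\emph{Smoothness estimate, with $\delta=1$.} Fix $x,x',y$ with $|x-x'|\le|x-y|/2$. Every point $\xi$ on the segment $[x,x']$ satisfies $|\xi-y|\simeq|x-y|$, and also $\xi+|\xi-y|\simeq x+|x-y|$, since $|\xi-x|\le|x-y|/2$. The mean value theorem gives
\[
|\KArho(x,y)-\KArho(x',y)|\le |x-x'|\sup_{\xi\in[x,x']}|\partial_x\KArho(\xi,y)|.
\]
It then suffices to show
\[
|\partial_x\KArho(\xi,y)|\lesssim \frac{1}{|\xi-y|\,\rho_\alpha(B(\xi,|\xi-y|))}.
\]
This follows case by case from \eqref{eq:AK-local-conj-deriv}, \eqref{eq:AK-right-conj-deriv} and \eqref{eq:AK-left-conj-deriv}: the extra power of $x$ or $y$ in each derivative bound is comparable to $|\xi-y|$ in the two far regions. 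The main nuisance is that the segment may cross the boundaries $y=2\xi$ or $\xi=2y$ between regions. I would handle this by noting that the bound above is region-independent, since in the transition zones all three regional bounds are comparable. The $y$-variable estimate is identical, using $\partial_y$.

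\emph{$L^2(\rho_\alpha)$ boundedness.} Take $p=2$ and $w_0(x)=x^{2\alpha-1}$, so that $U_{w_0}\equiv1$. By \eqref{eq:norm-input}--\eqref{eq:norm-output},
\[
\|\mathcal R_\alpha F\|_{L^2(\rho_\alpha)}=\|R_\alpha f\|_{L^2(w_0dx)},\qquad f=xF.
\]
Because $U_{w_0}\equiv 1\in A_2(\rho_\alpha)$, Proposition~\ref{prop:exact-Ap} gives $w_0\in A_{2,\alpha}$. The Andersen--Kerman theorem then shows that $R_\alpha$ is bounded on $L^2(w_0\,dx)$, which yields the bound. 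Alternatively, $w_0\,dx=x^{-1}dm_\alpha$ and one could use weighted $A_2(m_\alpha)$ theory, but the Andersen--Kerman route is cleaner.

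The only genuine obstacle is the transition bookkeeping in the smoothness step; the remaining steps are direct consequences of the quoted kernel package and the conjugation.
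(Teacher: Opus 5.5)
Your proposal is correct and follows the route the paper relies on. The size bound and the $\delta=1$ smoothness bound come from Proposition~\ref{prop:quotient-kernel-package}, using the mean value theorem and the region-independent pointwise estimate $|\partial_x\KArho(\xi,y)|\lesssim |\xi-y|^{-1}\rho_\alpha(B(\xi,|\xi-y|))^{-1}$. The $L^2(\rho_\alpha)$ bound is exactly the check with $w_0(x)=x^{2\alpha-1}$, $U_{w_0}\equiv1$, and the Andersen--Kerman theorem that the paper alludes to in Section~\ref{sec:conjugation}; the paper itself only cites this from an earlier work.

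The one inaccuracy is your parenthetical alternative. $x^{-1}\in A_2(m_\alpha)$ holds only for $\alpha>0$; for $\alpha\le0$ the measure $x^{2\alpha-1}\,dx$ is not even finite on $(0,r)$. So the usual Bessel $A_2(m_\alpha)$ theory does not cover $-1/2<\alpha<0$, and the Andersen--Kerman route is necessary there, not merely cleaner.
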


Based on the above argument, we note that the maximal truncation of $\mathcal R_\alpha$
is also of weak type $(1,1)$ with respect to $\rho_\alpha$ via the standard results (see for example \cite{CW77}).
}

We also need a lower bound for the kernel.  This non-degeneracy replaces the
Fourier argument used for the Euclidean Riesz transforms.

\begin{proposition}\label{prop:nondegenerate}
There are constants $3\le A_1<A_2<\infty$ and $c_\alpha>0$ with the following
property.  For every interval $I=I(x_0,r):=B(x_0,r)$ in $\Rp$, there is another
interval $\widetilde I=I(y_0,r)$ such that $\sup I<\inf\widetilde I$ and
$$
        A_1r\le |x_0-y_0|\le A_2r,
        \qquad
        \rho_\alpha(\widetilde I)\simeq_\alpha \rho_\alpha(I),
$$
and, for all $(x,y)\in I\times\widetilde I$, the kernel $\KArho(x,y)$ has one
sign and
\begin{equation}\label{eq:kernel-lower}
        |\KArho(x,y)|\ge \frac{c_\alpha}{\rho_\alpha(I)}.
\end{equation}
\end{proposition}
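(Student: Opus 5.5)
The construction splits into two regimes according to the ratio $r/x_0$. In both regimes the companion interval is placed to the right of $I$, using the two lower bounds of Proposition~\ref{prop:quotient-kernel-package}. Throughout, $I=B(x_0,r)$ with $x_0>0$, and we may assume $r\le x_0$ after shrinking, or else $I=(0,x_0+r)$. By Lemma~\ref{lem:volume}, $\rho_\alpha(I)\simeq r(x_0+r)^{2\alpha+1}$. We take $y_0=x_0+Ar$ with a single fixed constant $A\ge3$, so that $\sup I=x_0+r<y_0-r=\inf\widetilde I$ and $|x_0-y_0|=Ar$; thus $A_1=A_2$ up to a harmless enlargement of $A_2$. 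The work is to choose $A$ uniformly and verify the lower bound \eqref{eq:kernel-lower}.

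\emph{Far regime: $r\ge\varepsilon x_0$.} Here $\varepsilon>0$ is a small constant fixed below. For $x\in I$ and $y\in\widetilde I$ we have $x\le x_0+r\le(1/\varepsilon+1)r$ and $y\ge x_0+(A-1)r\ge(A-1)r$. Choosing $A$ large, depending on $\varepsilon$ and $\kappa_\infty$, gives $y\ge\kappa_\infty x$. Then \eqref{eq:offdiag-lower-Kcal} yields $\KArho(x,y)>0$ and $|\KArho(x,y)|\ge c\,y^{-2\alpha-2}$. Since $y\le x_0+(A+1)r\lesssim_{A,\varepsilon}r$, we get $y^{-2\alpha-2}\gtrsim r^{-2\alpha-2}$. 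On the other hand, $\rho_\alpha(I)\simeq r(x_0+r)^{2\alpha+1}\simeq_\varepsilon r^{2\alpha+2}$ because $x_0\le r/\varepsilon$. The same comparison gives $\rho_\alpha(\widetilde I)\simeq r(y_0+r)^{2\alpha+1}\simeq r^{2\alpha+2}$. Note that $2\alpha+2>0$ ensures these power comparisons go the right way for both signs of $2\alpha+1$.

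\emph{Near regime: $r<\varepsilon x_0$.} Now all points of $I\cup\widetilde I$ lie in $[x_0(1-\varepsilon),x_0(1+(A+1)\varepsilon)]$. For $x\in I$ and $y\in\widetilde I$ we have $y-x\in[(A-2)r,(A+2)r]$, so $0<y/x-1\le(A+2)r/x\le(A+2)\varepsilon/(1-\varepsilon)$. With $A$ already fixed, we choose $\varepsilon$ so small that this is $<\kappa_0$. The order of choices must be kept straight: $A$ is fixed first, say $A=3$ in this regime, then $\varepsilon$, and then the far-regime argument is applied with that $\varepsilon$. This forces the far regime to use a larger $A'$, and taking $A_1=3$, $A_2=A'$ accommodates both. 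Then \eqref{eq:local-lower-Kcal} gives a fixed sign and
\[
|\KArho(x,y)|\ge \frac{c}{x^{2\alpha+1}(y-x)}\gtrsim\frac{1}{x_0^{2\alpha+1}r}\simeq\frac1{\rho_\alpha(I)}.
\]
Also $\rho_\alpha(\widetilde I)\simeq r y_0^{2\alpha+1}\simeq r x_0^{2\alpha+1}$.

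The main obstacle is this bookkeeping of constants: the near regime needs $A$ bounded relative to $\kappa_0/\varepsilon$, while the far regime needs $A$ large relative to $\kappa_\infty/\varepsilon$. Using two different multipliers $A_1\le A\le A_2$, one per regime, resolves the tension. Beyond that, only the comparability $x\simeq x_0$, $y\simeq y_0$ is used, via Lemma~\ref{lem:volume}, with care about the sign of $2\alpha+1$ when comparing powers.
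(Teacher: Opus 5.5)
Your proposal is correct and follows essentially the paper's proof: the same split according to whether $x_0\lesssim r$ (your $\varepsilon$ plays the role of the paper's $1/C_0$), the same near-regime companion $y_0=x_0+3r$ controlled by \eqref{eq:local-lower-Kcal}, and the same order of constants ($\varepsilon$ from $\kappa_0$ first, then the far multiplier from $\kappa_\infty$ and $\varepsilon$), giving $A_1=3$. The only difference is cosmetic: in the far regime you take $y_0=x_0+A'r$ instead of the paper's $y_0=Ar$, which yields the exact distance $A'r$. Your opening remarks about a single $A$ and about \emph{shrinking} to $r\le x_0$ are superseded by the argument you actually give and are not needed.
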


\begin{proof}
We use the two lower bounds in Proposition~\ref{prop:quotient-kernel-package}.  Choose
$C_0>2$ so large that
$$
        \frac{5}{C_0-1}<\kappa_0.
$$
Then choose $A>C_0+3$ so large that
$$
        A-1\ge \kappa_\infty(C_0+1).
$$

Let $I=B(x_0,r)$.  First suppose $x_0\le C_0r$.  Put $y_0=Ar$ and
$\widetilde I=B(y_0,r)$.  Since $A>C_0+3$, one has $\sup I<\inf\widetilde I$.
If $x\in I$ and $y\in\widetilde I$, then
$x\le(C_0+1)r$ and $y\ge(A-1)r$.  Hence $y\ge\kappa_\infty x$, so
\eqref{eq:offdiag-lower-Kcal} applies.  Also $y\simeq r$, and therefore
$$
        |\KArho(x,y)|\gtrsim r^{-2\alpha-2}.
$$
Since $x_0\le C_0r$, Lemma~\ref{lem:volume} gives
$\rho_\alpha(I)\simeq r^{2\alpha+2}$.  Thus
$|\KArho(x,y)|\gtrsim \rho_\alpha(I)^{-1}$.  The same volume estimate gives
$\rho_\alpha(\widetilde I)\simeq \rho_\alpha(I)$, and $|x_0-y_0|$ lies between
$(A-C_0)r$ and $Ar$.

Now suppose $x_0>C_0r$.  Put $y_0=x_0+3r$ and
$\widetilde I=B(y_0,r)$.  Then $\sup I<\inf\widetilde I$.  If $x\in I$ and
$y\in\widetilde I$, then
$$
        r\le y-x\le5r,
        \qquad
        0<\frac{y}{x}-1\le\frac{5r}{x_0-r}<\kappa_0.
$$
Thus \eqref{eq:local-lower-Kcal} applies.  Since $x\simeq x_0$ on $I$,
$$
        |\KArho(x,y)|\gtrsim \frac{1}{x_0^{2\alpha+1}r}.
$$
Because $x_0>C_0r$, Lemma~\ref{lem:volume} gives
$\rho_\alpha(I)\simeq r x_0^{2\alpha+1}$.  Hence
$|\KArho(x,y)|\gtrsim \rho_\alpha(I)^{-1}$.  Also
$\rho_\alpha(\widetilde I)\simeq\rho_\alpha(I)$, and $|x_0-y_0|=3r$.

Taking
\begin{equation*}
        A_1=3,
        \qquad
        A_2=A+C_0,
\end{equation*}
gives the stated distance bounds.  
\end{proof}

\begin{remark}\label{rem:kernel-nondeg-data}
The later lower-bound and compactness arguments use only the following
properties of the construction.  The companion interval $\widetilde I$ has
the same radius as $I$, lies to the right of $I$ at a distance comparable to
this radius, satisfies
$
        \rho_\alpha(\widetilde I)\simeq_\alpha\rho_\alpha(I),
$
and the kernel has one sign and satisfies \eqref{eq:kernel-lower} on
$I\times\widetilde I$.

The construction also preserves the three interval regimes used in
$\VMO_{\nu,\alpha}$.  Indeed,
$
        |I|\simeq r\simeq|\widetilde I|,
$
so the small-interval and large-interval regimes are preserved.  Suppose that
$I_j\subset(R_j,\infty)$ with $R_j\to\infty$.  In the second case of the
construction, $\widetilde I_j$ lies farther to the right than $I_j$.  In the
first case,
$
        x_j-r_j>R_j,
        \ 
        x_j\le C_0r_j,
$
and hence
\[
        r_j>\frac{R_j}{C_0-1},
        \qquad
        \inf\widetilde I_j=(A-1)r_j\longrightarrow\infty.
\]
Thus the far-away regime is preserved as well.
\end{remark}

\section{The Bloom theorem and the boundedness characterization}\label{sec:Bloom-theorem}

In this section $T$ denotes a Calder\'on--Zygmund operator on
$(\Rp,|x-y|,\rho_\alpha)$ with kernel $K$ satisfying the estimates in
Proposition~\ref{prop:CZ-kernel}.  The weights $u,v$ are in
$A_p(\rho_\alpha)$ and
$$
        \eta=\left(\frac{u}{v}\right)^{1/p}.
$$
The weighted BMO space is $\BMO_{\eta,\alpha}$, defined as in
\eqref{eq:BMO-nu-def} with $\eta$ in place of $\nu$.

A family $\mathcal S$ of intervals is sparse if there are measurable sets
$E_I\subset I$, $I\in\mathcal S$, such that the sets $E_I$ are pairwise
disjoint and $\rho_\alpha(E_I)\ge c\rho_\alpha(I)$ for a fixed positive
constant $c$.

We now prove the upper bound.  The argument is the standard Bloom proof using
sparse domination, written with the measure $d\rho_\alpha$.

\begin{theorem}\label{thm:Bloom-upper-general}
Let $1<p<\infty$, $u,v\in A_p(\rho_\alpha)$, and
$\eta=(u/v)^{1/p}$.  Assume that $T$ is an $L^2(\rho_\alpha)$-bounded
Calder\'on--Zygmund operator on $(\Rp,|x-y|,\rho_\alpha)$ and that its maximal
truncation has the standard weak type $(1,1)$ estimate.  If
$b\in\BMO_{\eta,\alpha}$, then
\begin{equation}\label{eq:Bloom-upper-general}
        \|[b,T]\|_{L^p(u\,d\rho_\alpha)\to L^p(v\,d\rho_\alpha)}
        \lesssim
        \|b\|_{\BMO_{\eta,\alpha}}.
\end{equation}
The implicit constant depends on $p$, $\alpha$, the Calder\'on--Zygmund
constants of $T$, the weak type constant of the maximal truncation, and the
$A_p(\rho_\alpha)$ characteristics of $u$ and $v$.
\end{theorem}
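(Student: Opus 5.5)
We follow the Lerner--Ombrosi--Rivera-Ríos sparse approach to Bloom commutators, carried out on the space of homogeneous type $(\Rp,|x-y|,\rho_\alpha)$.

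\emph{Sparse domination.} The first step is to invoke the sparse domination for commutators of Calder\'on--Zygmund operators on spaces of homogeneous type. The hypotheses are exactly those of the theorem: the kernel estimates of Proposition~\ref{prop:CZ-kernel}, $L^2(\rho_\alpha)$ boundedness, and the weak type $(1,1)$ of the maximal truncation, whose grand maximal operator is then weak $(1,1)$. The conclusion is that for bounded compactly supported $f$ there are finitely many adjacent dyadic systems $\mathcal D^1,\dots,\mathcal D^M$ (Christ/Hytönen--Kairema cubes, which here are intervals) and sparse families $\mathcal S_m\subset\mathcal D^m$ with
\[
|[b,T]f(x)|\lesssim\sum_{m=1}^M\sum_{Q\in\mathcal S_m}\Bigl(|b(x)-b_Q|\langle |f|\rangle_Q+\langle |b-b_Q||f|\rangle_Q\Bigr)\one_Q(x).
\]
Here averages are taken with respect to $\rho_\alpha$. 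This is a black-box input from the literature, and checking its hypotheses is the main point where care is needed: the operator must be defined a priori on a dense class, and one must justify passing from truncations to the limit commutator.

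\emph{Duality.} Next we test against $g\in L^{p'}(v'\,d\rho_\alpha)$, where $v'=v^{1-p'}$, since $(L^p(v\,d\rho_\alpha))^*=L^{p'}(v'\,d\rho_\alpha)$ under the $\rho_\alpha$ pairing. Both sparse terms reduce to the same bilinear form with the roles of $f$ and $g$ swapped. The crucial lemma is that for a sparse family $\mathcal S$ and $b\in\BMO_{\eta,\alpha}$ there is a sparse family $\widetilde{\mathcal S}\supset\mathcal S$ such that
\[
|b-b_Q|\one_Q\lesssim\|b\|_{\BMO_{\eta,\alpha}}\sum_{P\in\widetilde{\mathcal S},\,P\subset Q}\langle\eta\rangle_P\one_P.
\]
This is the Lerner--Ombrosi--Rivera-Ríos pointwise lemma. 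It is proved by a local Calder\'on--Zygmund stopping argument using only the doubling of $\rho_\alpha$ and the definition of $\BMO_{\eta,\alpha}$, with $\int_Q|b-b_Q|\,d\rho_\alpha\le\|b\|\,\eta(Q)$. Inserting it gives
\[
\sum_{Q\in\widetilde{\mathcal S}}\langle\eta\rangle_Q\langle|f|\rangle_Q\langle|g|\rangle_Q\,\rho_\alpha(Q)
\]
and its twin with $\eta$ attached to the other function. This step uses the sparse-over-sparse summation $\sum_{Q}\langle|f|\rangle_Q\sum_{P\subset Q}\langle\eta\rangle_P\int_P|g|\lesssim\sum_P\langle\eta\rangle_P\langle|g|\rangle_P\langle|f|\rangle_P\rho_\alpha(P)$.

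\emph{Weighted bound.} It remains to bound $\sum_{Q}\langle\eta\rangle_Q\langle|f|\rangle_Q\langle|g|\rangle_Q\rho_\alpha(Q)$. Write $\langle\eta\rangle_Q\rho_\alpha(Q)\lesssim\eta(E_Q)$, which holds by Lemma~\ref{lem:Ap-fixed-portion}, using $\eta\in A_2(\rho_\alpha)$ from Lemma~\ref{lem:Bloom-weight-algebra}. This gives
\[
\lesssim\int M_{\rho_\alpha}f\;M_{\rho_\alpha}g\;\eta\,d\rho_\alpha\le\|M f\|_{L^p(u)}\|Mg\|_{L^{p'}(v')},
\]
because $\eta=u^{1/p}v^{-1/p}=u^{1/p}(v')^{1/p'}$ and we apply Hölder. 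Muckenhoupt's theorem on the space of homogeneous type, for $u\in A_p(\rho_\alpha)$ and $v'\in A_{p'}(\rho_\alpha)$, finishes with $\|f\|_{L^p(u)}\|g\|_{L^{p'}(v')}$; the twin term is symmetric. Passing from bounded compactly supported $f$ to $L^p(u\,d\rho_\alpha)$ is by density. The main obstacle I expect is the first step, namely applying the sparse domination correctly in this setting with limits of truncated commutators. Proposition~\ref{prop:Bloom-equivalent-forms} offers an alternative route for the second term, by pairing $\|(b-b_Q)\|$ in the $L^{p'}$-type Bloom form, should the pointwise lemma be inconvenient.
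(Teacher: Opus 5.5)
Your architecture (black-box sparse domination, then the Lerner--Ombrosi--Rivera-R\'{\i}os pointwise lemma, then H\"older with $\eta=u^{1/p}(v')^{1/p'}$) is the right one, but there is a genuine gap. The step you call the sparse-over-sparse summation,
\[
\sum_{Q\in\mathcal S}\langle|f|\rangle_Q\sum_{P\in\widetilde{\mathcal S},\,P\subseteq Q}\langle\eta\rangle_P\int_P|g|\,d\rho_\alpha\lesssim\sum_{P\in\widetilde{\mathcal S}}\langle\eta\rangle_P\langle|f|\rangle_P\langle|g|\rangle_P\,\rho_\alpha(P),
\]
is false, and it is exactly where the extra singularity of the commutator sits. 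After exchanging the sums, the left side is $\sum_P\langle\eta\rangle_P\int_P|g|\,d\rho_\alpha\sum_{Q\in\mathcal S,\,Q\supseteq P}\langle|f|\rangle_Q$. The sum of averages over the ancestors of $P$ is not controlled by $\langle|f|\rangle_P$, even after summing in $P$.

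For a concrete counterexample, take $\eta\equiv1$ and pass to $t=\Phi(x)$, so that $\rho_\alpha$ becomes Lebesgue measure. Let $\mathcal S=\widetilde{\mathcal S}=\{Q_k=(0,2^{-k}):0\le k\le N\}$; this family is sparse with $E_{Q_k}=Q_k\setminus Q_{k+1}$. Take $f=\one_{(1/2,1)}$ and $g=\one_{Q_N}$. Then the left side equals $\frac{N+1}{2}2^{-N}$, while the right side equals $\frac12 2^{-N}$.

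No other rearrangement can rescue the collapse. A bound of $\langle[b,T]f,g\rangle$ by $\|b\|_{\BMO}$ times a single sparse form $\sum_Q\langle|f|\rangle_Q\langle|g|\rangle_Q|Q|$ would give a linear $A_2$ bound for first-order commutators (for instance $[b,H]$ on the line). The sharp power there is known to be quadratic.

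The repair is to keep the inner sum as a sparse operator rather than collapsing it. Put $A_{\widetilde{\mathcal S}}h=\sum_{P\in\widetilde{\mathcal S}}\langle h\rangle_P\one_P$ and use $\langle\eta\rangle_P\int_P|g|\,d\rho_\alpha=\int_P\eta\,\langle|g|\rangle_P\,d\rho_\alpha$. Your pointwise lemma, together with $\mathcal S\subseteq\widetilde{\mathcal S}$, then gives
\[
\sum_{Q\in\mathcal S}\langle|f|\rangle_Q\int_Q|b-b_Q|\,|g|\,d\rho_\alpha
\lesssim\|b\|_{\BMO_{\eta,\alpha}}\sum_{Q\in\mathcal S}\langle|f|\rangle_Q\int_Q\eta\,A_{\widetilde{\mathcal S}}|g|\,d\rho_\alpha
\le\|b\|_{\BMO_{\eta,\alpha}}\int A_{\widetilde{\mathcal S}}|f|\;A_{\widetilde{\mathcal S}}|g|\;\eta\,d\rho_\alpha .
\]
The twin term yields the same expression.

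Your final H\"older step then goes through verbatim with $M_{\rho_\alpha}$ replaced by $A_{\widetilde{\mathcal S}}$. You close with the one-weight bounds for sparse operators on $L^p(u\,d\rho_\alpha)$ and $L^{p'}(v'\,d\rho_\alpha)$, in place of Muckenhoupt's theorem. This is the iterated estimate $\mathcal T^{*}_{\mathcal S,b}|f|\lesssim\|b\|_{\BMO_{\eta,\alpha}}A_{\widetilde{\mathcal S}}(\eta\,A_{\widetilde{\mathcal S}}|f|)$.

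The paper does not prove this estimate itself. It quotes the same sparse domination together with the weighted bounds for $\mathcal T_{\mathcal S,b}$ and $\mathcal T^{*}_{\mathcal S,b}$ from Section 4 of Duong--Gong--Kuffner--Li--Wick--Yang. With the correction above, your argument becomes a self-contained version of the paper's route.
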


\begin{proof}
For a nonnegative function $h$ and a sparse family $\mathcal S$, set
\[
\begin{aligned}
        \mathcal T_{\mathcal S,b}h(x)
        &=
        \sum_{I\in\mathcal S}
        |b(x)-b_I|\langle h\rangle_I\mathbf 1_I(x),\qquad
        \mathcal T_{\mathcal S,b}^{*}h(x)
        =
        \sum_{I\in\mathcal S}
        \langle |b-b_I|h\rangle_I\mathbf 1_I(x).
\end{aligned}
\]
For bounded compactly supported $f$, the sparse domination theorem
\cite[Theorem~3.7]{DuoGongKuffnerLiWickYang21}, applied with $m=1$,
gives finitely many sparse families
$\mathcal S_1,\ldots,\mathcal S_N$ such that
\[
        |[b,T]f|
        \lesssim
        \sum_{j=1}^{N}
        \left(
        \mathcal T_{\mathcal S_j,b}(|f|)
        +
        \mathcal T_{\mathcal S_j,b}^{*}(|f|)
        \right)
\]
almost everywhere.

The estimates proved in
\cite[Section~4]{DuoGongKuffnerLiWickYang21}, applied with
$m=1$ and $k=0,1$, give
\[
\begin{aligned}
        &\|\mathcal T_{\mathcal S,b}h\|_{L^p(v\,d\rho_\alpha)}
        +
        \|\mathcal T_{\mathcal S,b}^{*}h\|_{L^p(v\,d\rho_\alpha)}
        \lesssim
        \|b\|_{\BMO_{\eta,\alpha}}
        \|h\|_{L^p(u\,d\rho_\alpha)}.
\end{aligned}
\]
Since the number of sparse families is fixed, it follows that
\[
        \|[b,T]f\|_{L^p(v\,d\rho_\alpha)}
        \lesssim
        \|b\|_{\BMO_{\eta,\alpha}}
        \|f\|_{L^p(u\,d\rho_\alpha)}.
\]
The general case follows by density.
\end{proof}

The lower bound uses only the non-degeneracy of the kernel.  We give the
median argument in the present normalization.

\begin{theorem}\label{thm:Bloom-lower-general}
Assume, in addition, that $T$ satisfies the non-degeneracy conclusion of
Proposition~\ref{prop:nondegenerate}.  Let $1<p<\infty$,
$u,v\in A_p(\rho_\alpha)$, and $\eta=(u/v)^{1/p}$.  If $b$ is real-valued and
$[b,T]:L^p(u\,d\rho_\alpha)\to L^p(v\,d\rho_\alpha)$ is bounded, then
$b\in\BMO_{\eta,\alpha}$ and
\begin{equation}\label{eq:Bloom-lower-general}
        \|b\|_{\BMO_{\eta,\alpha}}
        \lesssim
        \|[b,T]\|_{L^p(u\,d\rho_\alpha)\to L^p(v\,d\rho_\alpha)}.
\end{equation}
\end{theorem}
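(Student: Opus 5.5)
We follow the median method of Lerner--Ombrosi--Rivera-Ríos, written for $d\rho_\alpha$. Fix an interval $I=B(x_0,r)$ and let $\widetilde I$ be the companion interval from Proposition~\ref{prop:nondegenerate}. On $I\times\widetilde I$ the kernel $K$ has one sign, $|K(x,y)|\ge c_\alpha/\rho_\alpha(I)$, and $\rho_\alpha(\widetilde I)\simeq\rho_\alpha(I)$. Let $m_b$ be a median of $b$ on $\widetilde I$ with respect to $\rho_\alpha$. Since $b$ is real-valued, we may choose $E_1\subset\{y\in\widetilde I:b(y)\le m_b\}$ and $E_2\subset\{y\in\widetilde I:b(y)\ge m_b\}$ with $\rho_\alpha(E_i)\ge\frac12\rho_\alpha(\widetilde I)$. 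Put $F_1=\{x\in I:b(x)\ge m_b\}$ and $F_2=\{x\in I:b(x)<m_b\}$.

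For $x\in F_i$ and $y\in E_i$, the difference $b(x)-b(y)$ has a fixed sign and $|b(x)-b(y)|\ge|b(x)-m_b|$. Because $x\in I$ and $E_i\subset\widetilde I$ are separated, the commutator acts on $\one_{E_i}$ without any principal value:
\[
[b,T]\one_{E_i}(x)=\int_{E_i}(b(x)-b(y))K(x,y)\,d\rho_\alpha(y),\qquad x\in I.
\]
The integrand has constant sign, so
\[
|[b,T]\one_{E_i}(x)|\ge \frac{c_\alpha\rho_\alpha(E_i)}{\rho_\alpha(I)}\,|b(x)-m_b|\gtrsim|b(x)-m_b|,\qquad x\in F_i.
\]
Since $E_i$ is bounded and away from $0$ only when $\widetilde I$ is, the identity first holds for $f\in C_c^\infty(\Rp)$. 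We extend it to $\one_{E_i}$ by approximating in $L^p(u\,d\rho_\alpha)$ and using the bounded extension, passing to an a.e.\ convergent subsequence on $I$. This is legitimate because $b$ is locally $\rho_\alpha$-integrable and the kernel is bounded on $I\times\widetilde I$.

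Next we integrate against $d\rho_\alpha$ and apply Hölder with the weights $v$ and $v'=v^{1-p'}$:
\[
\int_I|b-m_b|\,d\rho_\alpha\lesssim\sum_{i=1}^2\int_I|[b,T]\one_{E_i}|\,d\rho_\alpha\le\sum_i\|[b,T]\one_{E_i}\|_{L^p(v\,d\rho_\alpha)}\,v'(I)^{1/p'}.
\]
Boundedness of the commutator gives the further bound
\[
\lesssim\|[b,T]\|\,u(\widetilde I)^{1/p}\,v'(I)^{1/p'}.
\]
We now need $u(\widetilde I)\lesssim u(I)$. The interval $J=B(x_0,(A_2+1)r)$ contains both $I$ and $\widetilde I$, and $\rho_\alpha(I)\gtrsim\rho_\alpha(J)$ by Lemma~\ref{lem:volume} and doubling. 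Lemma~\ref{lem:Ap-fixed-portion} applied twice then gives $u(\widetilde I)\le u(J)\lesssim u(I)$. By \eqref{eq:Bloom-product-ineq}, $u(I)^{1/p}v'(I)^{1/p'}\lesssim\int_I\eta\,d\rho_\alpha$. Finally, $\int_I|b-b_I|\,d\rho_\alpha\le2\int_I|b-m_b|\,d\rho_\alpha$, and taking the supremum over $I$ yields \eqref{eq:Bloom-lower-general}.

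The only delicate step is justifying that the bounded extension acting on $\one_{E_i}$ is given pointwise on $I$ by the absolutely convergent integral. This is handled by the approximation argument above together with the disjointness of $I$ and $\widetilde I$. The rest is a direct adaptation of the Euclidean argument, with the non-degeneracy of Proposition~\ref{prop:nondegenerate} replacing the Fourier argument used for Euclidean Riesz transforms.
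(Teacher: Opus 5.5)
Your proposal is correct and follows essentially the same median argument as the paper. The shared steps are the companion interval, a median of $b$ on $\widetilde I$, the fixed-sign lower bound $|[b,T]\one_{E_i}|\gtrsim|b-m_b|$ on the matching half of $I$, H\"older with $v'$, the enclosing-interval estimate $u(\widetilde I)\le u(J)\lesssim u(I)$ from Lemma~\ref{lem:Ap-fixed-portion}, and the Bloom product inequality \eqref{eq:Bloom-product-ineq}.

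Your extra approximation step is a sound refinement; the paper skips it and only notes that separation removes the principal value. It works provided the approximants of $\one_{E_i}$ are chosen uniformly bounded and supported in a fixed compact set separated from $I$, so that dominated convergence applies using only $b\in L^1_{\mathrm{loc}}(\rho_\alpha)$.
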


\begin{proof}

Fix an interval $I$.  Let
$\widetilde I$ be the companion interval given by
Proposition~\ref{prop:nondegenerate}.  Let $m$ be a median of $b$ on
$\widetilde I$ with
respect to $\rho_\alpha$.  Thus both sets
$$
        \{y\in\widetilde I:b(y)\le m\},
        \qquad
        \{y\in\widetilde I:b(y)\ge m\}
$$
have $\rho_\alpha$-measure at least $\rho_\alpha(\widetilde I)/2$.
Set
$$
        E_+=\{x\in I:b(x)\ge m\},
        \qquad
        E_-=\{x\in I:b(x)<m\},
$$
and choose measurable sets
$$
        F_-\subset\{y\in\widetilde I:b(y)\le m\},
        \qquad
        F_+\subset\{y\in\widetilde I:b(y)\ge m\}
$$
with
$$
        \rho_\alpha(F_-),\rho_\alpha(F_+)
        \ge \frac{1}{2}\rho_\alpha(\widetilde I).
$$
For $(x,y)\in E_+\times F_-$ we have
$b(x)-b(y)\ge0$ and
$|b(x)-b(y)|\ge |b(x)-m|$.  For
$(x,y)\in E_-\times F_+$ the same statement holds with the opposite sign.
The kernel has a fixed sign on $I\times\widetilde I$.  Therefore,
using \eqref{eq:kernel-lower},
\begin{align*}
        \int_I |b-m|\,d\rho_\alpha
        &\lesssim
        \int_{E_+}|[b,T]\one_{F_-}(x)|\,d\rho_\alpha(x)
        +\int_{E_-}|[b,T]\one_{F_+}(x)|\,d\rho_\alpha(x).
\end{align*}
For example
\begin{align*}
        |[b,T]\one_{F_-}(x)|
        &=\left|\int_{F_-}(b(x)-b(y))K(x,y)\,d\rho_\alpha(y)\right|             \\
        &\ge \frac{c}{\rho_\alpha(I)}
             \int_{F_-}|b(x)-b(y)|\,d\rho_\alpha(y)                             \\
        &\gtrsim |b(x)-m|,
        \qquad x\in E_+,
\end{align*}
because $\rho_\alpha(F_-)\simeq\rho_\alpha(I)$.

Now apply H\"older's inequality and the boundedness of $[b,T]$:
\begin{align*}
        \int_{E_+}|[b,T]\one_{F_-}|\,d\rho_\alpha
        &\le
        \|[b,T]\one_{F_-}\|_{L^p(v\,d\rho_\alpha)}
        v'(E_+)^{1/p'}  \\                                                          
        &\le
        \|[b,T]\|_{L^p(u\,d\rho_\alpha)\to L^p(v\,d\rho_\alpha)}\,u(F_-)^{1/p}v'(I)^{1/p'}\\                                               
        &\le
        \|[b,T]\|_{L^p(u\,d\rho_\alpha)\to L^p(v\,d\rho_\alpha)}\,u(\widetilde I)^{1/p}v'(I)^{1/p'}.
\end{align*}
Let $H$ be an interval containing $I\cup\widetilde I$ with
$\rho_\alpha(H)\lesssim\rho_\alpha(I)\simeq\rho_\alpha(\widetilde I)$.  Lemma~\ref{lem:Ap-fixed-portion}, applied to $I\subset H$, gives
$u(H)\lesssim u(I)$, since $\rho_\alpha(I)\simeq\rho_\alpha(H)$.  Hence
$u(\widetilde I)\le u(H)\lesssim u(I)$.
Thus
$$
        \int_{E_+}|[b,T]\one_{F_-}|\,d\rho_\alpha
        \lesssim \|[b,T]\|_{L^p(u\,d\rho_\alpha)\to L^p(v\,d\rho_\alpha)}\,u(I)^{1/p}v'(I)^{1/p'}.
$$
The same estimate holds for the $E_-$ term.  By Lemma~\ref{lem:Bloom-weight-algebra},
$$
        u(I)^{1/p}v'(I)^{1/p'}\lesssim \eta_\alpha(I),
        \qquad
        \eta_\alpha(I)=\int_I\eta\,d\rho_\alpha.
$$
Consequently
$$
        \int_I |b-m|\,d\rho_\alpha
        \lesssim \|[b,T]\|_{L^p(u\,d\rho_\alpha)\to L^p(v\,d\rho_\alpha)}\,\eta_\alpha(I).
$$
Since
$$
        \int_I |b-b_I|\,d\rho_\alpha
        \le 2\int_I |b-m|\,d\rho_\alpha,
$$
we obtain
$$
        \frac{1}{\eta_\alpha(I)}
        \int_I |b-b_I|\,d\rho_\alpha
        \lesssim \|[b,T]\|_{L^p(u\,d\rho_\alpha)\to L^p(v\,d\rho_\alpha)}.
$$
Taking the supremum over all intervals $I$ proves \eqref{eq:Bloom-lower-general}.
\end{proof}

\subsection{Proof of the Andersen--Kerman boundedness theorem}\label{subsec:proof-bounded}

We now prove Theorem~\ref{thm:main-bounded}.

\begin{proof}[Proof of Theorem~\ref{thm:main-bounded}]
Let
$
        U_\mu=x^{p-2\alpha-1}\mu,
        \
        U_\lambda=x^{p-2\alpha-1}\lambda.
$
By Proposition~\ref{prop:exact-Ap},
$U_\mu,U_\lambda\in A_p(\rho_\alpha)$ and
$
        [U_\mu]_{A_p(\rho_\alpha)}=[\mu]_{A_{p,\alpha}},
        \ 
        [U_\lambda]_{A_p(\rho_\alpha)}=[\lambda]_{A_{p,\alpha}}.
$
Moreover
$$
        \left(\frac{U_\mu}{U_\lambda}\right)^{1/p}
        =\left(\frac{\mu}{\lambda}\right)^{1/p}=\nu.
$$

If $b\in\BMO_{\nu,\alpha}$, Theorem~\ref{thm:Bloom-upper-general}, applied to
$T=\mathcal R_\alpha$, gives
$$
        \|[b,\mathcal R_\alpha]\|_{L^p(U_\mu\,d\rho_\alpha)
        \to L^p(U_\lambda\,d\rho_\alpha)}
        \lesssim \|b\|_{\BMO_{\nu,\alpha}}.
$$
The norm identity \eqref{eq:norm-comm} then gives the upper bound in
\eqref{eq:main-bounded}.

Conversely, suppose
$[b,R_\alpha]:L^p(\mu\,dx)\to L^p(\lambda\,dx)$ is bounded.  By
\eqref{eq:norm-comm},
$[b,\mathcal R_\alpha]$ is bounded from $L^p(U_\mu\,d\rho_\alpha)$ to
$L^p(U_\lambda\,d\rho_\alpha)$ with the same norm.  The kernel of
$\mathcal R_\alpha$ is non-degenerate by Proposition~\ref{prop:nondegenerate}.
The lower bound theorem, Theorem~\ref{thm:Bloom-lower-general}, gives
$$
        \|b\|_{\BMO_{\nu,\alpha}}
        \lesssim
       \|[b,\mathcal R_\alpha]\|_{L^p(U_\mu\,d\rho_\alpha)
        \to L^p(U_\lambda\,d\rho_\alpha)}.
$$
Using \eqref{eq:norm-comm} again gives the lower bound in
\eqref{eq:main-bounded}.
\end{proof}

\section{Compactness}\label{sec:compactness}

We first prove compactness on the conjugated space and then return to the
Lebesgue Andersen--Kerman setting.

\begin{lemma}\label{lem:tail-integrability}
Let $1<s<\infty$, let $w\in A_s(\rho_\alpha)$, and let $E\subset\Rp$ be a
bounded interval.  Then
\begin{equation*}
\lim_{R\to\infty}
        \int_R^\infty
        \sup_{y\in E}\frac{w(x)}{\rho_\alpha(B(x,|x-y|))^s}
        \drho(x)=0.
\end{equation*}
\end{lemma}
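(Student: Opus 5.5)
The plan is to show that the integrand is eventually comparable to $w(x)/\rho_\alpha((0,x))^s$, and that this function is integrable at infinity. The $A_s(\rho_\alpha)$ condition then gives geometric decay over dyadic shells.

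\textbf{Reduction to a single function.} Fix $E\subset(0,e]$. For $x\ge 2e$ and $y\in E$ we have $|x-y|\ge x/2$. By Lemma~\ref{lem:volume},
\[
\rho_\alpha(B(x,|x-y|))\simeq |x-y|\,(x+|x-y|)^{2\alpha+1}\simeq x^{2\alpha+2}\simeq\rho_\alpha((0,x)),
\]
uniformly in $y\in E$. Hence, for $x\ge 2e$,
\[
\sup_{y\in E}\frac{w(x)}{\rho_\alpha(B(x,|x-y|))^s}\lesssim \frac{w(x)}{\rho_\alpha((0,x))^s}.
\]
It therefore suffices to prove
\[
\int_{1}^\infty \frac{w(x)}{\rho_\alpha((0,x))^{s}}\,d\rho_\alpha(x)<\infty,
\]
since the tail of a convergent integral tends to zero.

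\textbf{Dyadic decomposition.} Split $(1,\infty)$ into the shells $(2^{k},2^{k+1}]$, $k\ge0$. On each shell $\rho_\alpha((0,x))\simeq 2^{k(2\alpha+2)}$, so the integral is bounded by
\[
\sum_{k\ge0} 2^{-k(2\alpha+2)s}\, w((0,2^{k+1})).
\]
The required growth bound on $w$ comes from Lemma~\ref{lem:Ap-fixed-portion}, applied with $E=(0,1)$ and $I=(0,2^{k+1})$:
\[
\left(\frac{\rho_\alpha((0,1))}{\rho_\alpha((0,2^{k+1}))}\right)^{s}\le [w]_{A_s}\,\frac{w((0,1))}{w((0,2^{k+1}))}.
\]
This gives
\[
w((0,2^{k+1}))\lesssim 2^{k(2\alpha+2)s}\,w((0,1)).
\]
Here $w((0,1))<\infty$ because $w$ is an $A_s(\rho_\alpha)$ weight, which is finite on bounded intervals. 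This bound is exactly borderline, so each term of the series is only $O(1)$ and the sum diverges.

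\textbf{Main obstacle: strict improvement of the growth exponent.} The key step is to gain a strict improvement in the exponent. I would use the openness of Muckenhoupt classes: since $w\in A_s(\rho_\alpha)$, there is $s_0<s$ with $w\in A_{s_0}(\rho_\alpha)$. This holds on the space of homogeneous type $(\Rp,|x-y|,\rho_\alpha)$ by reverse H\"older; alternatively, transfer via $\Phi$ to $A_s(\Rp,dt)$ and use even reflection as in \eqref{eq:even-Ap}. Applying Lemma~\ref{lem:Ap-fixed-portion} with $q=s_0$ instead gives
\[
w((0,2^{k+1}))\lesssim 2^{k(2\alpha+2)s_0},
\]
so the series is dominated by $\sum_k 2^{-k(2\alpha+2)(s-s_0)}<\infty$. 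Together with the reduction step, this proves the lemma.
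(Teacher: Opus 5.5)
Your proposal is correct and follows essentially the same route as the paper. Both arguments reduce to $\rho_\alpha(B(x,|x-y|))\simeq x^{2\alpha+2}$ for large $x$, pass to some $q<s$ by openness of $A_s(\rho_\alpha)$, and apply Lemma~\ref{lem:Ap-fixed-portion} to get $w((0,2^{k+1}))\lesssim 2^{kq(2\alpha+2)}$, which makes the dyadic series summable. The only difference is the reference interval: you use $(0,1)$, correctly noting that $0<w((0,1))<\infty$ follows from the $A_s$ condition, whereas the paper uses an interval $E_0=(A+1,A+2)$ to the right of $E$.
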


\begin{proof}
Use the open property of $A_s(\rho_\alpha)$ weights and choose
$q<s$ such that $w\in A_q(\rho_\alpha)$.  Let $E\subset(0,A)$.  Fix an interval
$E_0=(A+1,A+2)$ to the right of $E$.  Lemma~\ref{lem:Ap-fixed-portion}, applied to $E_0\subset B$, gives the power-growth estimate
$$
        w(B)\le C
        \left(\frac{\rho_\alpha(B)}{\rho_\alpha(E_0)}\right)^q w(E_0)
$$
whenever $B$ is an interval containing $E_0$.  
If $x\in(2^k,2^{k+1})$, $k$ is large, and $y\in E$, then
$B(x,|x-y|)$ contains $E_0$.
Lemma \ref{lem:volume} gives
$
        \rho_\alpha(B(x,|x-y|))\simeq 2^{k(2\alpha+2)}
$
 and
$
        \rho_\alpha((0,2^{k+1}))\simeq 2^{k(2\alpha+2)}.
$
Therefore
\begin{align*}
        \int_{2^k}^{2^{k+1}}
        \sup_{y\in E}\frac{w(x)}{\rho_\alpha(B(x,|x-y|))^s}
        \drho(x)
        &\lesssim 2^{-ks(2\alpha+2)}w((0,2^{k+1}))  
        \lesssim 2^{-ks(2\alpha+2)}2^{kq(2\alpha+2)}
        =2^{-k(s-q)(2\alpha+2)}.
\end{align*}
The last series is summable because $q<s$.  Its tail tends to zero.
\end{proof}

\subsection{Smooth symbols give compact commutators}

\begin{lemma}\label{lem:smooth-compact}
Let $T$ be a Calder\'on--Zygmund operator on $(\Rp,|x-y|,\rho_\alpha)$ with
kernel satisfying the estimates in Proposition~\ref{prop:CZ-kernel}.  Let
$u,v\in A_p(\rho_\alpha)$ and let $a\in\Crhoc$.  Then
$$
        [a,T]:L^p(u\,d\rho_\alpha)\to L^p(v\,d\rho_\alpha)
$$
is compact.
\end{lemma}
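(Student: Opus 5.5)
The plan is to write $[a,T]$ as a limit, in the operator norm from $L^p(u\,d\rho_\alpha)$ to $L^p(v\,d\rho_\alpha)$, of operators that are visibly compact. Fix $a=\varphi\circ\Phi$ with $\operatorname{supp}\varphi\subset[0,M]$, so that $a$ is supported in $(0,\Phi^{-1}(M)]\subset(0,L)$. On $\Rp\times\Rp$ the commutator has kernel $(a(x)-a(y))K(x,y)$. By \eqref{eq:rho-Lipschitz} and \eqref{eq:CZ-size},
\[
|a(x)-a(y)||K(x,y)|\lesssim \|\varphi'\|_\infty\frac{\rho_\alpha(B(x,|x-y|))}{\rho_\alpha(B(x,|x-y|))}\lesssim 1 ,
\]
so the kernel is bounded near the diagonal. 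It is also bounded by $2\|a\|_\infty|K(x,y)|$ everywhere. For $\delta>0$ small and $R>2L$ large, I split the kernel with a smooth partition of unity into three pieces.
\begin{itemize}
\item $K_1$ is supported in $\{|x-y|<\delta\}$.
\item $K_2$ is supported where $|x-y|\ge\delta$ and $x,y<R$.
\item $K_3$ is the remainder, which lives where $x\ge R$ or $y\ge R$. Since $a$ vanishes beyond $L$, this forces either $x\ge R$ with $y<L$, or $y\ge R$ with $x<L$.
\end{itemize}

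For the near-diagonal piece, the bounded kernel gives
\[
|T_1f(x)|\lesssim \int_{B(x,\delta)}|f|\,d\rho_\alpha .
\]
Moreover $T_1f$ is supported in the $\delta$-neighbourhood of $(0,L)$, and only $f$ on $(0,L+\delta)$ matters. On that bounded region $\rho_\alpha(B(x,\delta))\to0$ uniformly as $\delta\to0$, since $\rho_\alpha(B(x,\delta))\simeq\delta(x+\delta)^{2\alpha+1}$ is small for $x$ bounded. I then write
\[
|T_1f|\lesssim \rho_\alpha(B(x,\delta))\,Mf\le \sup_{x<L+1}\rho_\alpha(B(x,\delta))\,Mf .
\]
The norm bound then needs two-weight boundedness of $M$ from $L^p(u\,d\rho_\alpha)$ to $L^p(v\,d\rho_\alpha)$ on a bounded set, which is false in general. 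So instead I localize. For a bounded set $E$ I use
\[
\|\mathbf 1_E g\|_{L^p(v)}\le \|g\,v^{1/p}u^{-1/p}\|\ldots
\]
This is awkward, and I consider the cleaner route better. Both $u$ and $v$ are doubling $A_p$ weights on a bounded region, and a piece $\mathbf 1_{Q'}T_1(f\mathbf 1_{Q''})$ on small cubes $Q'\sim Q''$ of side $\delta$ can be estimated via
\[
\langle |f|\rangle_{Q''}\le \langle u\rangle_{Q''}^{-1/p}\|f\|_{L^p(u,Q'')}\,[u]^{1/p}_{A_p}\rho_\alpha(Q'')^{-1/p}\ldots
\]
This yields the bound $\|T_1\|\lesssim \sup_Q\bigl(v(Q)/u(Q)\bigr)^{1/p}\rho_\alpha(Q)$ over intervals of length $\sim\delta$ in $(0,L+1)$. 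Using Lemma~\ref{lem:Bloom-weight-algebra}, $v(Q)^{1/p}u(Q)^{-1/p}\lesssim \rho_\alpha(Q)^{-1}\int_Q\eta^{-1}\,d\rho_\alpha$. Because $\eta^{-1}\in A_2(\rho_\alpha)\subset A_\infty$ is integrable on $(0,L+1)$, absolute continuity gives $\sup_{|Q|\le\delta}\int_Q\eta^{-1}\,d\rho_\alpha\to0$. Summing over the bounded-overlap cover of $(0,L+1)$ by such $Q'$, $Q''$ gives $\|T_1\|\to0$ as $\delta\to0$. This is the main obstacle: the unequal weights rule out any crude maximal-function bound, and the small-cube Bloom estimate is where the care goes.

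For the far piece $K_3$, I treat the region $x\ge R$, $y<L$; the other region is handled symmetrically by duality, swapping the roles of $(u,v)$ with $(v',u')$. Here $|K_3|\lesssim|K(x,y)|\le \rho_\alpha(B(x,|x-y|))^{-1}$. By H\"older,
\[
|T_3f(x)|\le \|f\|_{L^p(u)}\,\Bigl(\sup_{y<L}\rho_\alpha(B(x,|x-y|))^{-1}\Bigr)\,u'((0,L))^{1/p'} .
\]
Hence
\[
\|T_3\|^p\lesssim u'((0,L))^{p-1}\int_R^\infty\sup_{y<L}\frac{v(x)}{\rho_\alpha(B(x,|x-y|))^p}\,d\rho_\alpha(x).
\]
By Lemma~\ref{lem:tail-integrability} with $s=p$, $w=v$ and $E=(0,L)$, this tends to $0$ as $R\to\infty$. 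The factor $u'((0,L))$ is finite because $u'\in A_{p'}(\rho_\alpha)$ is locally integrable, including near $0$.

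Finally, $T_2$ has a bounded kernel supported in $(0,R)^2$, and $u'$ and $v$ are integrable on $(0,R)$. The kernel therefore lies in the mixed space $L^{p}(v\,d\rho_\alpha;L^{p'}(u'\,d\rho_\alpha))$ over a set of finite measure, so $T_2$ is a Hilbert--Schmidt-type integral operator. It is approximable in norm by finite-rank operators obtained from simple-function approximations of the kernel, and hence is compact. Therefore $[a,T]$ is a norm limit of compact operators, and so it is compact.
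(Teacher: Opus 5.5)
Your argument is correct and is essentially the paper's proof. You use the same split into a near-diagonal piece, a truncated piece on $(0,R)^2$ that is compact by finite-rank approximation in $L^p(v\,d\rho_\alpha;L^{p'}(u'\,d\rho_\alpha))$, and two far tails controlled by Lemma~\ref{lem:tail-integrability}. The only difference there is that you get the far-$y$ tail by duality, whereas the paper applies the lemma a second time with $s=p'$ and $w=u'$.

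The one step you overcomplicate is the diagonal piece. Since $|a(x)-a(y)|\,|K(x,y)|\lesssim\|\varphi'\|_\infty$ everywhere, H\"older and Fubini directly bound it by $\sup_x u'(B(x,\delta))^{1/p'}\sup_y v(B(y,\delta))^{1/p}$ over the bounded region, and this tends to zero by absolute continuity. So the unequal weights are no real obstacle, and no Bloom inequality is needed; in fact the global kernel bound makes the diagonal cut-off unnecessary altogether.
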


\begin{proof}
Write $u'=u^{1-p'}$ and $a=\varphi\circ\Phi$ with
$\varphi\in C_c^1([0,\infty))$.  Choose
$s_1>1$ so that the closed-half-line support of $a$ is contained in $[0,s_1]$.
Set
$
        S=(0,s_1),\ S^*=(0,2s_1+2),\ L_a=\|\varphi'\|_\infty.
$
Recall from \eqref{eq:rho-Lipschitz}, the basic cancellation is
\begin{equation}\label{eq:a-rho-Lip-compact}
        |a(x)-a(y)|
        \le L_a\rho_\alpha(B(x,|x-y|)),
        \qquad x,y>0.
\end{equation}
Together with the size estimate for $K$, this makes
$(a(x)-a(y))K(x,y)$ locally integrable across the diagonal.  Thus the
commutator has the integral representation
$$
        [a,T]f(x)=\int_0^\infty(a(x)-a(y))K(x,y)f(y)\,d\rho_\alpha(y)
$$
for bounded compactly supported $f$, and the estimates below extend the formula
by density to $L^p(u\,d\rho_\alpha)$.  Since $a(x)-a(y)=0$ unless at least one
of $x,y$ belongs to $S$, only the diagonal and the two tails at infinity have to
be removed.

Fix $0<\varepsilon<1$ and $R>2s_1+2$.  Put
$$
        H_{\varepsilon,R}(x,y)
        =(a(x)-a(y))K(x,y)\one_{\{|x-y|>\varepsilon\}}
        \one_{(0,R)}(x)\one_{(0,R)}(y),
$$
and let $A_{\varepsilon,R}$ be the corresponding integral operator.  On the
support of $H_{\varepsilon,R}$ the denominator
$\rho_\alpha(B(x,|x-y|))$ is bounded below by a positive constant depending on
$\alpha,\varepsilon,R$.  Hence
$$
        H_{\varepsilon,R}
        \in L^p(v\,d\rho_\alpha;L^{p'}(u'\,d\rho_\alpha)).
$$
For any kernel $H$ in this Bochner space, the associated integral operator
$A_H$ satisfies
$$
        |A_H f(x)|\le
        \|H(x,\cdot)\|_{L^{p'}(u'\,d\rho_\alpha)}
        \|f\|_{L^p(u\,d\rho_\alpha)},
$$
and hence
$$
        \|A_H\|_{L^p(u\,d\rho_\alpha)\to L^p(v\,d\rho_\alpha)}
        \le \|H\|_{L^p(v\,d\rho_\alpha;L^{p'}(u'\,d\rho_\alpha))}.
$$
Simple tensors are dense in the Bochner space.  If
$G(x,y)=\sum_{m=1}^N \phi_m(x)\psi_m(y)$ is such a tensor kernel, then
$$
        A_Gf(x)=\sum_{m=1}^N \phi_m(x)\int_0^\infty \psi_m(y)f(y)\,d\rho_\alpha(y)
$$
is finite rank.  Taking tensor kernels converging to $H_{\varepsilon,R}$ in the
Bochner norm and using the preceding operator-norm estimate shows that
$A_{\varepsilon,R}$ is compact.

We now estimate $[a,T]-A_{\varepsilon,R}$ by three errors.

First, the near-diagonal part is
$$
        E_{\varepsilon}^0f(x)
        =\int_{\{|x-y|\le\varepsilon\}}(a(x)-a(y))K(x,y)f(y)\,d\rho_\alpha(y).
$$
If the integrand is non-zero, then one of $x,y$ lies in $S$. Since
$\varepsilon<1$, both variables lie in $S^*$.  By
\eqref{eq:a-rho-Lip-compact} and \eqref{eq:CZ-size},
$$
        |a(x)-a(y)|\,|K(x,y)|
        \lesssim L_a\one_{S^*}(x)\one_{B(x,\varepsilon)\cap S^*}(y).
$$
H\"older's inequality and Fubini give
\begin{align*}
        |E_{\varepsilon}^0f(x)|^p
        &\lesssim L_a^p\one_{S^*}(x)
        u'(B(x,\varepsilon)\cap S^*)^{p/p'}
        \int_{B(x,\varepsilon)\cap S^*}|f(y)|^p u(y)\,d\rho_\alpha(y), \\
        \|E_{\varepsilon}^0\|_{L^p(u\,d\rho_\alpha)\to L^p(v\,d\rho_\alpha)}
        &\lesssim L_a
        \sup_{x\in S^*}u'(B(x,\varepsilon)\cap S^*)^{1/p'}
        \sup_{y\in S^*}v(B(y,\varepsilon)\cap S^*)^{1/p}.
\end{align*}
Both suprema tend to zero as $\varepsilon\downarrow0$, because the measures
$u'\,d\rho_\alpha$ and $v\,d\rho_\alpha$ are finite and non-atomic on $S^*$.
Thus the diagonal error is small in operator norm.

Second, since $R>s_1$, $a(x)=0$ for $x>R$.  The far-$x$ error is
$$
        E_R^xf(x)
        =-\one_{(R,\infty)}(x)\int_S a(y)K(x,y)f(y)\,d\rho_\alpha(y).
$$
Using H\"older's inequality in $y$ and the size estimate \eqref{eq:CZ-size},
\begin{align*}
        \|E_R^x\|_{L^p(u\,d\rho_\alpha)\to L^p(v\,d\rho_\alpha)}
        &\lesssim \|a\|_\infty u'(S)^{1/p'}
        \left(\int_R^\infty
        \sup_{y\in S}\frac{v(x)}{\rho_\alpha(B(x,|x-y|))^p}
        \drho(x)\right)^{1/p}.
\end{align*}
The last factor tends to zero by Lemma~\ref{lem:tail-integrability}, applied
with $s=p$, $w=v$, and $E=S$.

Third, since $a(y)=0$ for $y>R$, the far-$y$ error is
$$
        E_R^yf(x)=\one_S(x)a(x)\int_R^\infty K(x,y)f(y)\,d\rho_\alpha(y).
$$
For $x\in S$, H\"older's inequality in the $y$ variable gives
$$
        |E_R^yf(x)|
        \le \|a\|_\infty\|f\|_{L^p(u\,d\rho_\alpha)}
        \left(\int_R^\infty |K(x,y)|^{p'}u'(y)\,d\rho_\alpha(y)\right)^{1/p'}.
$$
For $x\in S$ and $y>R$, doubling gives
$\rho_\alpha(B(x,|x-y|))\simeq_\alpha\rho_\alpha(B(y,|x-y|))$.  Therefore
\begin{align*}
        \|E_R^y\|_{L^p(u\,d\rho_\alpha)\to L^p(v\,d\rho_\alpha)}
        &\lesssim \|a\|_\infty v(S)^{1/p}
        \left(\int_R^\infty
        \sup_{x\in S}\frac{u'(y)}{\rho_\alpha(B(y,|x-y|))^{p'}}
        \drho(y)\right)^{1/p'}.
\end{align*}
This tends to zero by Lemma~\ref{lem:tail-integrability}, with $s=p'$,
$w=u'$, and $E=S$.

The approximating kernels do not cut off the endpoint $0$.  Thus there is no
fourth error there, and the only local singularity is the diagonal singularity,
which has already been absorbed by the $\rho_\alpha$-Lipschitz cancellation of
$a$.  Combining the three estimates gives
$$
        \|[a,T]-A_{\varepsilon,R}\|_{L^p(u\,d\rho_\alpha)\to L^p(v\,d\rho_\alpha)}
        \le o_{\varepsilon\downarrow0}(1)+o_{R\to\infty}(1).
$$
Therefore $[a,T]$ is a norm limit of compact operators and is compact.
\end{proof}

\subsection{Sufficiency of VMO}

{ 
The following argument is the half-line version of the compactness
sufficiency argument in \cite[Theorem~1.4]{LL22}.
}

\begin{proposition}\label{prop:compact-suff-conj}
Let $u,v\in A_p(\rho_\alpha)$, let $\eta=(u/v)^{1/p}$, and let
$T$ be as in Theorem~\ref{thm:Bloom-upper-general}.  If
$b\in\VMO_{\eta,\alpha}$, then
$
        [b,T]:L^p(u\,d\rho_\alpha)\to L^p(v\,d\rho_\alpha)
$
is compact.
\end{proposition}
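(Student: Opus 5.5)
The plan is a density-plus-approximation argument that combines the upper bound of Theorem~\ref{thm:Bloom-upper-general} with the compactness of commutators with smooth symbols from Lemma~\ref{lem:smooth-compact}. The whole difficulty has already been packaged into earlier results, so the proof reduces to noting that compact operators form a closed subspace in operator norm.

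First, by Lemma~\ref{lem:Bloom-weight-algebra} we have $\eta\in A_2(\rho_\alpha)$, so Proposition~\ref{prop:density-VMO} applies with $\nu=\eta$. Given $b\in\VMO_{\eta,\alpha}$, it gives a sequence $a_k\in\Crhoc$ with
\[
\|b-a_k\|_{\BMO_{\eta,\alpha}}\to0 .
\]
Second, each $[a_k,T]:L^p(u\,d\rho_\alpha)\to L^p(v\,d\rho_\alpha)$ is compact by Lemma~\ref{lem:smooth-compact}. Third, the commutator is linear in the symbol, so
\[
[b,T]-[a_k,T]=[b-a_k,T].
\]
Theorem~\ref{thm:Bloom-upper-general}, applied to the symbol $b-a_k\in\BMO_{\eta,\alpha}$, then gives
\[
\|[b,T]-[a_k,T]\|_{L^p(u\,d\rho_\alpha)\to L^p(v\,d\rho_\alpha)}\lesssim\|b-a_k\|_{\BMO_{\eta,\alpha}}\to0 .
\]
Hence $[b,T]$ is a norm limit of compact operators between Banach spaces, and is therefore compact.

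The only point needing care is the meaning of the identity $[b,T]-[a_k,T]=[b-a_k,T]$, since each operator is first defined on bounded compactly supported functions and then extended. I would verify the identity on that dense class, where every term is given by the usual formula. Indeed, $a_k$ is bounded, so $[a_k,T]f$ makes sense there, and $b$ is locally $\rho_\alpha$-integrable, so $bTf$ and $T(bf)$ make sense in the standard truncated-limit sense. The bounded extensions then agree by uniqueness, since the norm estimate controls them on a dense subspace.

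I expect no serious obstacle beyond this bookkeeping. One subtlety is that $\BMO_{\eta,\alpha}$ is only a seminorm space modulo constants, but this is harmless because $[c,T]=0$ for every constant $c$. All the real work (the density of $\Crhoc$ in the weighted VMO, the Lipschitz cancellation of smooth symbols in the $\rho_\alpha$ geometry, and the sparse upper bound) is already contained in the cited results.
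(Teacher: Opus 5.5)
Your proposal is correct and follows the paper's own proof. Both approximate $b$ by $a_k\in\Crhoc$ via Proposition~\ref{prop:density-VMO}, bound $[b-a_k,T]$ using Theorem~\ref{thm:Bloom-upper-general}, get compactness of each $[a_k,T]$ from Lemma~\ref{lem:smooth-compact}, and conclude because compact operators are closed in operator norm. Your added points are details the paper leaves implicit: that $\eta\in A_2(\rho_\alpha)$ by Lemma~\ref{lem:Bloom-weight-algebra}, so the density proposition applies, and the linearity-in-the-symbol bookkeeping.
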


\begin{proof}
By Proposition~\ref{prop:density-VMO}, choose $a_k\in\Crhoc$ such that
$$
        \|b-a_k\|_{\BMO_{\eta,\alpha}}\to0.
$$
The boundedness theorem, Theorem~\ref{thm:Bloom-upper-general}, gives
$$
        \|[b-a_k,T]:L^p(u\,d\rho_\alpha)\to L^p(v\,d\rho_\alpha)\|
        \lesssim \|b-a_k\|_{\BMO_{\eta,\alpha}}\to0.
$$
By Lemma~\ref{lem:smooth-compact}, each $[a_k,T]$ is compact.  Hence
$[b,T]$ is a norm limit of compact operators and is compact.
\end{proof}

\subsection{Necessity of VMO}

{ 
We use a weak-convergence version of the weighted compactness argument in
\cite[Theorem~1.6]{LL22}.  The
point is that the testing functions are carried by the companion intervals,
not by the original intervals where the oscillation is measured.
}

\begin{lemma}\label{lem:weak-null-indicators}
Let $u\in A_p(\rho_\alpha)$.  Let $J_j$ be intervals and let
$F_j\subset J_j$ be measurable sets.  Assume that there is a fixed
$c_0>0$ such that
$$
        \rho_\alpha(F_j)\ge c_0\rho_\alpha(J_j)
        \qquad\text{for all }j.
$$
If, after passing to a subsequence, the intervals satisfy one of the three
conditions
$$
        |J_j|\to0,\qquad |J_j|\to\infty,\qquad \inf J_j\to\infty,
$$
then
$$
        f_j=\frac{\one_{F_j}}{u(F_j)^{1/p}}
$$
converges weakly to zero in $L^p(u\,d\rho_\alpha)$.
\end{lemma}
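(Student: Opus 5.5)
Since $\|f_j\|_{L^p(u\,d\rho_\alpha)}=1$ for every $j$ and $L^p(u\,d\rho_\alpha)$ is reflexive, it suffices to test weak convergence against a dense class in the dual. The dual is $L^{p'}(u'\,d\rho_\alpha)$, realised through the pairing $\int fg\,d\rho_\alpha$ after writing $g=u\,h$. Functions $g$ which are bounded and supported in a compact subinterval $K\subset\Rp$ (with $\inf K>0$) are dense there. So we fix such a $g$ and aim to show
\[
\Bigl|\int f_j g\,d\rho_\alpha\Bigr|\le \|g\|_\infty\,\frac{\rho_\alpha(F_j\cap K)}{u(F_j)^{1/p}}\longrightarrow0 .
\]
The main tool is Lemma~\ref{lem:Ap-fixed-portion}. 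Since $\rho_\alpha(F_j)\ge c_0\rho_\alpha(J_j)$, it gives $u(J_j)\lesssim u(F_j)$. Hence it is enough to bound $\rho_\alpha(F_j\cap K)/u(J_j)^{1/p}$.

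\emph{Case $\inf J_j\to\infty$.} Eventually $J_j\cap K=\emptyset$, so the pairing vanishes.

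\emph{Case $|J_j|\to\infty$.} Use the crude bound $\rho_\alpha(F_j\cap K)\le\rho_\alpha(K)$. It then suffices to show $u(J_j)\to\infty$ whenever $J_j$ meets $K$; if $J_j$ misses $K$ the pairing is already $0$. Let $J_j\supset K$ be such an interval, or let $\widehat J_j$ be a comparable enlargement of $J_j$. Apply the $A_\infty$-type consequence of Lemma~\ref{lem:Ap-fixed-portion}, namely
\[
u(E)/u(J)\le C(\rho_\alpha(E)/\rho_\alpha(J))^{\theta}
\qquad\text{for }E\subset J,
\]
with $E$ a fixed interval near $K$ inside $J_j$. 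This gives $u(J_j)\gtrsim u(E)\,(\rho_\alpha(J_j)/\rho_\alpha(E))^{\theta}\to\infty$. Here we use doubling of $u\,d\rho_\alpha$ to handle intervals that meet $K$ but contain only part of it. The needed growth of $u(J_j)$ is the same fact $w(\Rp)=\infty$ used in Lemma~\ref{lem:half-line-VMO-density}, transferred through $\Phi$.

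\emph{Case $|J_j|\to0$.} This is the step I expect to need the most care. If $J_j$ meets $K$, then $J_j$ lies in a fixed compact $K^*\subset(0,\infty)$ for large $j$. By the first part of Lemma~\ref{lem:Ap-fixed-portion}, applied with $E=F_j\subset J_j$ as in \eqref{fix portion}, H\"older gives
\[
\rho_\alpha(F_j)\le u(F_j)^{1/p}\,u'(J_j)^{1/p'} .
\]
Therefore
\[
\frac{\rho_\alpha(F_j\cap K)}{u(F_j)^{1/p}}\le \frac{\rho_\alpha(F_j)}{u(F_j)^{1/p}}\le u'(J_j)^{1/p'} .
\]
Since $u'\,d\rho_\alpha$ is locally finite and absolutely continuous, and $\rho_\alpha(J_j)\to0$ with $J_j\subset K^*$, we get $u'(J_j)\to0$. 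This absolute-continuity argument is the same one used for the diagonal error in Lemma~\ref{lem:smooth-compact}. Notice that this bound does not even use the proportion $c_0$, so the small case is actually easy. The genuine obstacle is the large case, where one must be careful that intervals meeting $K$ have $u$-mass tending to infinity uniformly. I would settle this by passing to $\tilde u=u\circ\Phi^{-1}\in A_p(\Rp,dt)$ and using the reverse-doubling iteration $\tilde u((0,2^mR))\ge2\tilde u((0,R))$ from the proof of Lemma~\ref{lem:half-line-VMO-density}.

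Finally, because every subsequence of $(f_j)$ falls into one of the three cases along a further subsequence, and each case gives a zero weak limit, the whole sequence converges weakly to zero, using boundedness plus density in the dual.
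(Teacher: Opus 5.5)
Your proof is correct and follows essentially the paper's route: you reduce to bounded compactly supported test functions in $L^{p'}(u'\,d\rho_\alpha)$, dispose of the far-away regime trivially, and handle $|J_j|\to0$ by H\"older plus absolute continuity of $u'\,d\rho_\alpha$, where your bound $u'(J_j)^{1/p'}$ plays the role of the paper's $(\int_{F_j}|h|^{p'}u'\,d\rho_\alpha)^{1/p'}$. You handle $|J_j|\to\infty$ by the $A_\infty$ estimate forcing $u(F_j)\gtrsim u(J_j)\to\infty$, and your device of discarding intervals that miss $K$ neatly replaces the paper's sub-case on bounded left endpoints. Two cosmetic points. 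First, the power-type $A_\infty$ bound is not literally Lemma~\ref{lem:Ap-fixed-portion}, which bounds $u(E)/u(J)$ from below; for a fixed interval $E\subset J$ it does follow from that lemma by a reverse-doubling chain. Second, your closing subsequence paragraph is unnecessary, since the conclusion is only asserted along the subsequence on which one of the three regimes holds.
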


\begin{proof}
Lemma~\ref{lem:Ap-fixed-portion} gives a constant $c_u>0$, depending only
on $u$ and $c_0$, such that
$$
        u(F_j)\ge c_u u(J_j)
        \qquad\text{for all }j.
$$
Let $h_0\in L^{p'}(u'\,d\rho_\alpha)$, $u'=u^{1-p'}$.  It is enough to prove
$$
        u(F_j)^{-1/p}\int_{F_j}h\,d\rho_\alpha\to0
$$
for bounded $h$ with compact support, since such functions are dense in
$L^{p'}(u'\,d\rho_\alpha)$ and
$$
        u(F_j)^{-1/p}\left|\int_{F_j}(h-h_0)\,d\rho_\alpha\right|
        \le \|h-h_0\|_{L^{p'}(u'\,d\rho_\alpha)}.
$$

Fix a bounded $h$ supported in a compact interval $K\Subset\Rp$. 
If $\inf J_j\to\infty$, then $F_j\cap K=\varnothing$ for all large $j$, and
the integral is eventually zero.

Assume next that $|J_j|\to0$.  
If a subsequence has
$\inf J_j\to\infty$, we are in the first case.  Otherwise, after passing
to a subsequence, the left endpoints of $J_j$ are bounded above.  
Assume $J_j\subset(0,M)$ for a fixed $M<\infty$. Then,
since $\alpha>-1/2$, the local finiteness and non-atomicity of $\rho_\alpha$
give $\rho_\alpha(J_j)\to0$.  Hence $\rho_\alpha(F_j)\to0$.  Absolute continuity
of $|h|^{p'}u'\,d\rho_\alpha$ gives
$$
        u(F_j)^{-1/p}\left|\int_{F_j}h\,d\rho_\alpha\right|
        \le
        \left(\int_{F_j}|h|^{p'}u'\,d\rho_\alpha\right)^{1/p'}\to0.
$$

It remains to treat $|J_j|\to\infty$.  We assume again that the left endpoints of $J_j$ are bounded above.  Since
$|J_j|\to\infty$, the right endpoints tend to infinity.  Choose an interval
$E_0\Subset\Rp$ with $u(E_0)>0$ that is eventually contained in $J_j$ for all large $j$.
The $A_\infty$ estimate applied to $E_0\subset J_j$ gives
$$
        \frac{u(E_0)}{u(J_j)}
        \le C\left(\frac{\rho_\alpha(E_0)}{\rho_\alpha(J_j)}\right)^\delta.
$$
Since $\rho_\alpha(J_j)\to\infty$, it follows that $u(J_j)\to\infty$, hence
$u(F_j)\to\infty$.  Therefore
\begin{align*}
        u(F_j)^{-1/p}\left|\int_{F_j}h\,d\rho_\alpha\right|
        &=u(F_j)^{-1/p}\left|\int_{F_j\cap K}h\,d\rho_\alpha\right|  
        \le
        \left(\int_K |h|^{p'}u'\,d\rho_\alpha\right)^{1/p'}
        \left(\frac{u(F_j\cap K)}{u(F_j)}\right)^{1/p}
        \to0,
\end{align*}
because $u(F_j\cap K)\le u(K)<\infty$.  This proves the weak convergence.
\end{proof}

The next lemma is the compactness testing part of the median construction.  It
is stated separately to make the compactness lower estimate independent of the
boundedness lower-bound proof.

\begin{lemma}\label{lem:compact-median-testing-sets}
Let $T$ be a Calder\'on--Zygmund operator on $(\Rp,|x-y|,\rho_\alpha)$ whose
kernel satisfies the non-degeneracy condition \eqref{eq:kernel-lower}.
Let $I$ be an interval, and let $\widetilde I$ be the corresponding companion
interval from Proposition~\ref{prop:nondegenerate}.
For every real-valued locally integrable $b$ there are measurable sets
$E\subset I$ and $F\subset\widetilde I$ such that
$$
        \rho_\alpha(F)\ge c\rho_\alpha(\widetilde I),
$$
the function $(b(x)-b(y))K(x,y)$ has a fixed sign on $E\times F$, and that 
$$
        \int_E |[b,T]\one_F(x)|\,d\rho_\alpha(x)
        \ge c\int_I |b-b_I|\,d\rho_\alpha.
$$
The constant $c>0$ depends only on $\alpha$ and on the non-degeneracy constants.
\end{lemma}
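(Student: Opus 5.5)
The plan is to reuse the median construction from the proof of Theorem~\ref{thm:Bloom-lower-general}, but to retain only \emph{one} of the two pieces, namely the one that carries at least half of the oscillation. Let $m$ be a $\rho_\alpha$-median of $b$ on $\widetilde I$, and put
\[
E_+=\{x\in I:b(x)\ge m\},\qquad E_-=\{x\in I:b(x)<m\},
\]
\[
F_-=\{y\in\widetilde I:b(y)\le m\},\qquad F_+=\{y\in\widetilde I:b(y)\ge m\}.
\]
By the choice of median, $\rho_\alpha(F_\pm)\ge\frac12\rho_\alpha(\widetilde I)$. Since $\int_I|b-b_I|\,d\rho_\alpha\le 2\int_I|b-m|\,d\rho_\alpha$, at least one of $\int_{E_+}|b-m|\,d\rho_\alpha$ and $\int_{E_-}|b-m|\,d\rho_\alpha$ is at least $\frac14\int_I|b-b_I|\,d\rho_\alpha$. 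If it is the first, take $(E,F)=(E_+,F_-)$; otherwise take $(E,F)=(E_-,F_+)$.

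Next I check the fixed sign. On $E_+\times F_-$ we have $b(x)\ge m\ge b(y)$, so $b(x)-b(y)\ge 0$. On $E_-\times F_+$ we have $b(x)<m\le b(y)$, so $b(x)-b(y)\le0$. By Proposition~\ref{prop:nondegenerate}, $K$ has one sign on $I\times\widetilde I$. Hence $(b(x)-b(y))K(x,y)$ has a fixed sign on $E\times F$, which gives the second assertion.

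For the lower bound, fix $x\in E$. Since $x\in I$ and $F\subset\widetilde I$ is disjoint from $I$ (because $\sup I<\inf\widetilde I$), the commutator is given by the absolutely convergent off-diagonal integral
\[
[b,T]\one_F(x)=\int_F (b(x)-b(y))K(x,y)\,d\rho_\alpha(y).
\]
The integrand has one sign, so using \eqref{eq:kernel-lower} and $|b(x)-b(y)|\ge|b(x)-m|$ on $E\times F$,
\[
|[b,T]\one_F(x)|\ge \frac{c_\alpha}{\rho_\alpha(I)}\,\rho_\alpha(F)\,|b(x)-m|\ge \frac{c_\alpha}{2}\frac{\rho_\alpha(\widetilde I)}{\rho_\alpha(I)}|b(x)-m|.
\]
Because $\rho_\alpha(\widetilde I)\simeq_\alpha\rho_\alpha(I)$, this is $\gtrsim|b(x)-m|$. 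Integrating over $E$ and using the choice of $E$ then gives
\[
\int_E|[b,T]\one_F|\,d\rho_\alpha\gtrsim\int_I|b-b_I|\,d\rho_\alpha .
\]

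The only delicate point is justifying the integral representation of $[b,T]\one_F$ on $E$ when $b$ is merely locally integrable. I would handle it as follows. The set $\overline{I}\cup\overline{\widetilde I}$ lies at positive distance from the diagonal region $\{x=y\}$ restricted to $I\times\widetilde I$, and $b\one_F\in L^1(\rho_\alpha)$ because $\widetilde I$ is bounded and $b$ is $\rho_\alpha$-integrable on bounded intervals. The kernel is bounded on $I\times\widetilde I$ by \eqref{eq:CZ-size}, since $|x-y|\ge\inf\widetilde I-\sup I>0$ there. Consequently the truncated commutators $[b,T_\varepsilon]\one_F(x)$ coincide with the displayed integral for $\varepsilon<\operatorname{dist}(I,\widetilde I)$, and the limit exists trivially. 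The constant obtained depends only on $c_\alpha$ and the comparability constant $\rho_\alpha(\widetilde I)/\rho_\alpha(I)$. It does not depend on any weights, as required.
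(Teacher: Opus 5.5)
Your proposal is correct and follows the paper's proof essentially verbatim: the same median $m$ on $\widetilde I$, the same sets $E_\pm$ and $F_\mp$, and the same one-sign lower bound $|[b,T]\one_F(x)|\gtrsim|b(x)-m|$; the only cosmetic difference is that you pick the half carrying more of $\int_I|b-m|\,d\rho_\alpha$ before integrating, whereas the paper sums both pieces and then keeps the larger commutator integral. Your extra paragraph justifying the absolutely convergent off-diagonal representation of $[b,T]\one_F$ on $I$ is a useful detail that the paper states in only one line.
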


\begin{proof}
Since $I$ and $\widetilde I$ are separated, no principal value is involved in
$[b,T]\one_F(x)$ for $x\in I$ and $F\subset\widetilde I$.  Let $m$ be a median
of $b$ on $\widetilde I$ with respect to $\rho_\alpha$.  Put
\begin{align*}
        E_+&=\{x\in I:b(x)\ge m\},
        &E_-&=\{x\in I:b(x)<m\}, \\
        F_-&=\{y\in\widetilde I:b(y)\le m\},
        &F_+&=\{y\in\widetilde I:b(y)\ge m\}.
\end{align*}
Then
$$
        \rho_\alpha(F_-),\rho_\alpha(F_+)
        \ge \frac12\rho_\alpha(\widetilde I).
$$
The kernel has one sign on $I\times\widetilde I$.  For $x\in E_+$, the
integrand in $[b,T]\one_{F_-}(x)$ has one sign, and Proposition~\ref{prop:nondegenerate}
gives
\begin{align*}
        |[b,T]\one_{F_-}(x)|
        =\left|\int_{F_-}(b(x)-b(y))K(x,y)\,d\rho_\alpha(y)\right|       
        \ge \frac{c_\alpha}{\rho_\alpha(I)}
             \int_{F_-}(b(x)-b(y))\,d\rho_\alpha(y)                       
        \ge c |b(x)-m|.
\end{align*}
Here we used $\rho_\alpha(F_-)\simeq\rho_\alpha(\widetilde I)\simeq\rho_\alpha(I)$.
The same argument with $E_-$ and $F_+$ gives
$$
        |[b,T]\one_{F_+}(x)|\ge c|b(x)-m|,
        \qquad x\in E_-.
$$
Consequently
$$
        \int_{E_+}|[b,T]\one_{F_-}|\,d\rho_\alpha
        +\int_{E_-}|[b,T]\one_{F_+}|\,d\rho_\alpha
        \ge c\int_I |b-m|\,d\rho_\alpha
        \ge \frac c2  \int_I |b-b_I|\,d\rho_\alpha.
$$
Choosing the larger of the two displayed lower bounds and
renaming the corresponding pair as $(E,F)$ proves the lemma.
\end{proof}

\begin{lemma}\label{lem:compact-testing}
Let $T$ be a Calder\'on--Zygmund operator on $(\Rp,|x-y|,\rho_\alpha)$ whose
kernel satisfies the non-degeneracy condition \eqref{eq:kernel-lower}. Let $u,v\in A_p(\rho_\alpha)$ and
$\eta=(u/v)^{1/p}$.  If $b$ is real-valued and
$$
        [b,T]:L^p(u\,d\rho_\alpha)\to L^p(v\,d\rho_\alpha)
$$
is compact, then $b\in\VMO_{\eta,\alpha}$.
\end{lemma}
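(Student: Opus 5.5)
We argue by contradiction. Compactness implies boundedness, so Theorem~\ref{thm:Bloom-lower-general} gives $b\in\BMO_{\eta,\alpha}$. Suppose $b\notin\VMO_{\eta,\alpha}$. Then one of the three vanishing conditions in Definition~\ref{def:VMO-nu} fails. This yields $\delta_0>0$ and intervals $I_j$ with $\omega_\eta(b;I_j)\ge\delta_0$ such that $|I_j|\to0$, or $|I_j|\to\infty$, or $\inf I_j\to\infty$.

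For each $j$, take the companion interval $\widetilde I_j$ from Proposition~\ref{prop:nondegenerate}, and apply Lemma~\ref{lem:compact-median-testing-sets} to get sets $E_j\subset I_j$ and $F_j\subset\widetilde I_j$ with $\rho_\alpha(F_j)\ge c\rho_\alpha(\widetilde I_j)$. By Remark~\ref{rem:kernel-nondeg-data}, the companion intervals stay in the same regime as the $I_j$: $|\widetilde I_j|\simeq|I_j|$, and $\inf\widetilde I_j\to\infty$ whenever $\inf I_j\to\infty$. Lemma~\ref{lem:weak-null-indicators}, applied with $J_j=\widetilde I_j$, then shows that
\[
f_j=\one_{F_j}/u(F_j)^{1/p}
\]
converges weakly to zero in $L^p(u\,d\rho_\alpha)$. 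Since a compact operator maps weakly null sequences to norm null sequences, $\|[b,T]f_j\|_{L^p(v\,d\rho_\alpha)}\to0$.

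For the contradiction we need a uniform lower bound on these norms. By H\"older's inequality, as in the proof of Theorem~\ref{thm:Bloom-lower-general},
\[
c\int_{I_j}|b-b_{I_j}|\,d\rho_\alpha\le\int_{E_j}|[b,T]\one_{F_j}|\,d\rho_\alpha\le \|[b,T]\one_{F_j}\|_{L^p(v\,d\rho_\alpha)}\,v'(I_j)^{1/p'}.
\]
Hence
\[
\|[b,T]f_j\|_{L^p(v\,d\rho_\alpha)}\ge c\,\frac{\int_{I_j}|b-b_{I_j}|\,d\rho_\alpha}{u(F_j)^{1/p}v'(I_j)^{1/p'}}.
\]
We bound $u(F_j)\le u(\widetilde I_j)\lesssim u(I_j)$ using an enclosing interval $H_j\supset I_j\cup\widetilde I_j$ with $\rho_\alpha(H_j)\lesssim\rho_\alpha(I_j)$ together with Lemma~\ref{lem:Ap-fixed-portion}. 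Lemma~\ref{lem:Bloom-weight-algebra} then gives $u(I_j)^{1/p}v'(I_j)^{1/p'}\lesssim\eta_\alpha(I_j)$. Combining these,
\[
\|[b,T]f_j\|_{L^p(v\,d\rho_\alpha)}\gtrsim\omega_\eta(b;I_j)\ge\delta_0,
\]
which contradicts the norm convergence to zero.

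The main obstacle is the uniformity of constants across the three regimes. The doubling constant of $\rho_\alpha$ and the comparisons $\rho_\alpha(\widetilde I_j)\simeq\rho_\alpha(I_j)\simeq\rho_\alpha(H_j)$ must be independent of $j$. This holds because Proposition~\ref{prop:nondegenerate} places $\widetilde I_j$ at distance at most $A_2r$ with the same radius, so $H_j=B(x_0,(A_2+1)r)$ works by Lemma~\ref{lem:volume}. Only the fixed-portion estimate is then needed, with constants depending on $[u]_{A_p}$ alone. A second point to check is that Lemma~\ref{lem:weak-null-indicators} is applied to the companion intervals, not to the $I_j$, and that the passage to subsequences within it is harmless, since any subsequence of $f_j$ still has images with norm at least a fixed multiple of $\delta_0$.
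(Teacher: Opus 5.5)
Your proposal is correct and follows essentially the same route as the paper's proof. Boundedness gives $b\in\BMO_{\eta,\alpha}$; a failing vanishing condition gives intervals $I_j$ in one of the three regimes, with companion intervals $\widetilde I_j$ and median testing sets $E_j\subset I_j$, $F_j\subset\widetilde I_j$ making $f_j$ weakly null; and H\"older's inequality together with $u(F_j)\le u(H_j)\lesssim u(I_j)$ and Lemma~\ref{lem:Bloom-weight-algebra} gives the uniform lower bound $\|[b,T]f_j\|_{L^p(v\,d\rho_\alpha)}\gtrsim\delta_0$, which contradicts compactness. Your explicit choice $H_j=B(x_0,(A_2+1)r)$ is a valid instance of the enclosing interval the paper uses.
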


\begin{proof}
Compactness implies boundedness, and Theorem~\ref{thm:Bloom-lower-general}
gives $b\in\BMO_{\eta,\alpha}$.  We prove the three vanishing conditions by
contradiction.

Suppose that one of the conditions in Definition~\ref{def:VMO-nu} fails.  Then
there are $\delta_0>0$ and intervals $I_j$ such that
$$
        \frac1{\eta_\alpha(I_j)}
        \int_{I_j}|b-b_{I_j}|\,d\rho_\alpha\ge\delta_0,
$$
and, after passing to a subsequence, one of the three regimes holds:
$$
        |I_j|\to0,
        \qquad |I_j|\to\infty,
        \qquad \inf I_j\to\infty.
$$
Let $\widetilde I_j$ be the companion interval from
Proposition~\ref{prop:nondegenerate}.  Remark~\ref{rem:kernel-nondeg-data} gives
$$
        \rho_\alpha(\widetilde I_j)\simeq\rho_\alpha(I_j),
$$
and preserves the relevant interval regime: $|\widetilde I_j|\to0$ in the
small-interval case, $|\widetilde I_j|\to\infty$ in the large-interval case, and
$\inf\widetilde I_j\to\infty$ in the far-away case.

Apply Lemma~\ref{lem:compact-median-testing-sets} to each $I_j$.  We obtain
sets $E_j\subset I_j$ and $F_j\subset\widetilde I_j$ such that
$\rho_\alpha(F_j)\ge c\rho_\alpha(\widetilde I_j)$, the sign of
$(b(x)-b(y))K(x,y)$ is fixed on $E_j\times F_j$, and
$$
        \int_{E_j}|[b,T]\one_{F_j}(x)|\,d\rho_\alpha(x)
        \ge c\int_{I_j}|b-b_{I_j}|\,d\rho_\alpha
        \ge c\delta_0\eta_\alpha(I_j).
$$
Set
$$
        f_j=\frac{\one_{F_j}}{u(F_j)^{1/p}}.
$$
Then $\|f_j\|_{L^p(u\,d\rho_\alpha)}=1$.  
For the norm lower bound below
we use only the trivial inequality $u(F_j)\le u(\widetilde I_j)$.

Let $H_j$ be the smallest interval containing $I_j\cup\widetilde I_j$.  By the
companion geometry,
$$
        \rho_\alpha(H_j)\simeq\rho_\alpha(I_j)
        \simeq\rho_\alpha(\widetilde I_j).
$$
Then 
Lemma~\ref{lem:Ap-fixed-portion} gives $u(H_j)\lesssim u(I_j)$.  Hence
$$
        u(F_j)\le u(\widetilde I_j)\le u(H_j)\lesssim u(I_j).
$$
Also $E_j\subset I_j$, so $v'(E_j)\le v'(I_j)$.  Lemma~\ref{lem:Bloom-weight-algebra}
therefore gives
$$
        u(F_j)^{1/p}v'(E_j)^{1/p'}
        \lesssim u(I_j)^{1/p}v'(I_j)^{1/p'}
        \lesssim \eta_\alpha(I_j).
$$
Using $\one_{F_j}=u(F_j)^{1/p}f_j$ and H\"older's inequality on $E_j$, we get
\begin{align*}
        c\delta_0\eta_\alpha(I_j)
        &\le \int_{E_j}|[b,T]\one_{F_j}|\,d\rho_\alpha                         
        =u(F_j)^{1/p}\int_{E_j}|[b,T]f_j|\,d\rho_\alpha                         \\
        &\le u(F_j)^{1/p}
        \|[b,T]f_j\|_{L^p(v\,d\rho_\alpha;E_j)}v'(E_j)^{1/p'}                  \\
        &\lesssim \eta_\alpha(I_j)\|[b,T]f_j\|_{L^p(v\,d\rho_\alpha)}.
\end{align*}
Thus
\begin{equation}\label{eq:image-lower}
        \|[b,T]f_j\|_{L^p(v\,d\rho_\alpha)}
        \ge c\delta_0
        \qquad\text{for every }j.
\end{equation}

By Lemma~\ref{lem:weak-null-indicators}, applied with
$J_j=\widetilde I_j$, the sequence $f_j$ converges weakly to zero in
$L^p(u\,d\rho_\alpha)$ after passing to a subsequence if necessary.  A compact
linear operator maps weakly convergent bounded sequences to norm-convergent
sequences, and the norm limit must be the image of the weak limit.  Compactness
of $[b,T]$ would therefore imply
$$
        \|[b,T]f_j\|_{L^p(v\,d\rho_\alpha)}\to0,
$$
contradicting \eqref{eq:image-lower}.  Hence none of the three vanishing
conditions can fail, and $b\in\VMO_{\eta,\alpha}$.
\end{proof}

\subsection{Proof of the compactness theorem}

\begin{proof}[Proof of Theorem~\ref{thm:main-compact}]
As in Subsection~\ref{subsec:proof-bounded}, put
$
        U_\mu=x^{p-2\alpha-1}\mu,
        \
        U_\lambda=x^{p-2\alpha-1}\lambda.
$
Then $U_\mu,U_\lambda\in A_p(\rho_\alpha)$ and
$\nu=(U_\mu/U_\lambda)^{1/p}$.

If $b\in\VMO_{\nu,\alpha}$, Proposition~\ref{prop:compact-suff-conj} applied
to $T=\mathcal R_\alpha$ gives compactness of
$$
        {}[b,\mathcal R_\alpha]:L^p(U_\mu\,d\rho_\alpha)\to
        L^p(U_\lambda\,d\rho_\alpha).
$$
The isometric identity \eqref{eq:norm-comm} transfers this compactness to
$$
        {}[b,R_\alpha]:L^p(\mu\,dx)\to L^p(\lambda\,dx).
$$

Conversely, if $[b,R_\alpha]$ is compact on the Andersen--Kerman Lebesgue setting,
then \eqref{eq:norm-comm} shows that $[b,\mathcal R_\alpha]$ is compact from
$L^p(U_\mu\,d\rho_\alpha)$ to $L^p(U_\lambda\,d\rho_\alpha)$.  The kernel of
$\mathcal R_\alpha$ is non-degenerate by Proposition~\ref{prop:nondegenerate}.
Lemma~\ref{lem:compact-testing} gives $b\in\VMO_{\nu,\alpha}$.
\end{proof}

\bigskip
\section*{Acknowledgments} C. Wen would like to thank Professor Jill Pipher for her discussions at Macquarie University for emphasizing the need to state explicitly the inclusion relations among the weight classes.

J. Li and C. Wen are supported by the Australian Research Council 
DP260100485.  C.-W. Liang is supported by NSTC grant
111-\allowbreak2115-\allowbreak M-\allowbreak002-\allowbreak010-\allowbreak MY5.
L. Wu is supported by the National Natural Science Foundation of China under grant 12201002.

\end{document}